\newcommand\Z{{\mathbb Z}}
\newtheorem{theorem}{Theorem}[section]
\newtheorem{corollary}[theorem]{Corollary}
\newtheorem{lemma}[theorem]{Lemma}
\newtheorem{proposition}[theorem]{Proposition}
\newtheorem{definition}[theorem]{Definition}
\newtheorem{example}[theorem]{Example}
\newtheorem{remark}[theorem]{Remark}
\newcommand{\cal}{\mathcal}
\begin{document}

\title{Fundamental Heap for  Framed Links and Ribbon Cocycle Invariants}

\author{Masahico Saito} 
\address{Department of Mathematics, 
	University of South Florida, Tampa, FL 33620, U.S.A.} 
\email{saito@usf.edu} 

\author{Emanuele Zappala} 
\address{Department of Computational Medicine \& Bioinformatics, 
	University of Michigan, Ann Arbor, MI 48109-2800, U.S.A.} 
\email{zae@usf.edu, zemanuel@umich.edu}

\begin{abstract}
A heap is a set with a 
	certain ternary operation that is self-distributive (TSD) and
exemplified by a group with 
the operation $(x,y,z)\mapsto xy^{-1}z$. 
We introduce and investigate  framed link invariants using heaps.
 In analogy with the knot group, we  define the fundamental heap of framed links using group presentations.
The fundamental heap is determined for some classes of links such as certain families of torus and pretzel links. We show that for these families of links there exist epimorphisms from  fundamental heaps to Vinberg and Coxeter groups, implying that  corresponding groups are infinite. 
A relation to the Wirtinger presentation is also described. 

The cocycle invariant 
is defined using ternary self-distributive (TSD) cohomology, by means of a state sum that uses ternary heap $2$-cocycles as weights. 
This invariant corresponds to a rack cocycle invariant for the rack constructed by
doubling of a heap, while colorings can be regarded as heap morphisms from the fundamental heap.

For the construction of the invariant, first computational methods for the heap cohomology are developed.
It is shown that the cohomology  splits into two types, called degenerate and nondegenerate, and that the degenerate part is one dimensional.
Subcomplexes are constructed based on group cosets, that allow computations of the nondegenerate part. 
Computations of the cocycle invariants are presented using the cocycles constructed, and 
conversely, it is proved that the invariant values can be used to  derive algebraic properties of the cohomology.
\end{abstract}

\maketitle

\date{\empty}

\tableofcontents

\section{Introduction}

	A heap is a set with a ternary operation satisfying two properties called para-associativity and degeneracy condition. The fundamental example of heap is a group $G$ with operation $(x,y,z) \mapsto xy^{-1}z$ (which we call a {\it group heap}). Moreover, it can be proved that the category of pointed heaps (i.e. heaps with an arbitrary choice of a fixed element) and the category of group heaps are 
	equivalent \cite{BH}. 
		It has been observed that heaps do satisfy 
		the {\it ternary self-distributive} (TSD) property, along with 
		a few other properties~\cite{ESZheap}.
		A ternary operation $(x,y,z)\mapsto T(x,y,z)$ is called ternary self-distributive if it satisfies
	$$T ( T(x, y, z ), u, v)= T ( T(x, u, v ),  T(y, u, v ),  T(z, u, v ) ) $$
	for all $x, y, z, u, v$. 
	This	makes a heap a ternary analogue of a {\it quandle}. 
		Quandles  
	  are binary self-distributive structures 
	 that have been  extensively studied  for constructing knot invariants in recent decades. 
Diagrammatic representations using link diagrams for higher-arity self-distributive  operations have been also used as computational aids in \cite{ESZ}, and their cohomology has been studied. 
Knot invariants using ternary operations have been studied, for example, in 
	  \cite{NOO,Nie1,Nie2}, in which colorings are assigned to complementary regions,
	  while in this paper, colorings are assigned on doubled arcs of framed links.
	  
The purpose of this paper is to introduce and study framed link invariants, named 
	the {\it fundamental heap} and {\it ribbon cocycle invariants}, that utilize 
		 heaps  and their ternary self-distributive cohomology. 
	The fundamental heap is defined by group presentations in a  manner similar  to the knot group being defined through Wirtinger presentation. 
	We construct the cocycle  invariants using 
	a state-sum, weighted by heap $2$-cocycles in analogy to  quandle cocycle invariants
	\cite{CJKLS}. 
	The main novelty is that  colorings by group heaps  for the doubled arcs
	are used in a manner essential to ternary operations beyond composing binary operations, and using groups allows us to define  the fundamental heap by group presentations.
The existence part of the ternary cocycle invariant has also been included in the PhD dissertation of the second author \cite{EZ}, where the construction is given for general ternary self-distributive operations and their cohomology. Here we focus our attention on the class of heap invariants. 
		
\begin{figure}[htb]
\begin{center}
\includegraphics[width=3.5in]{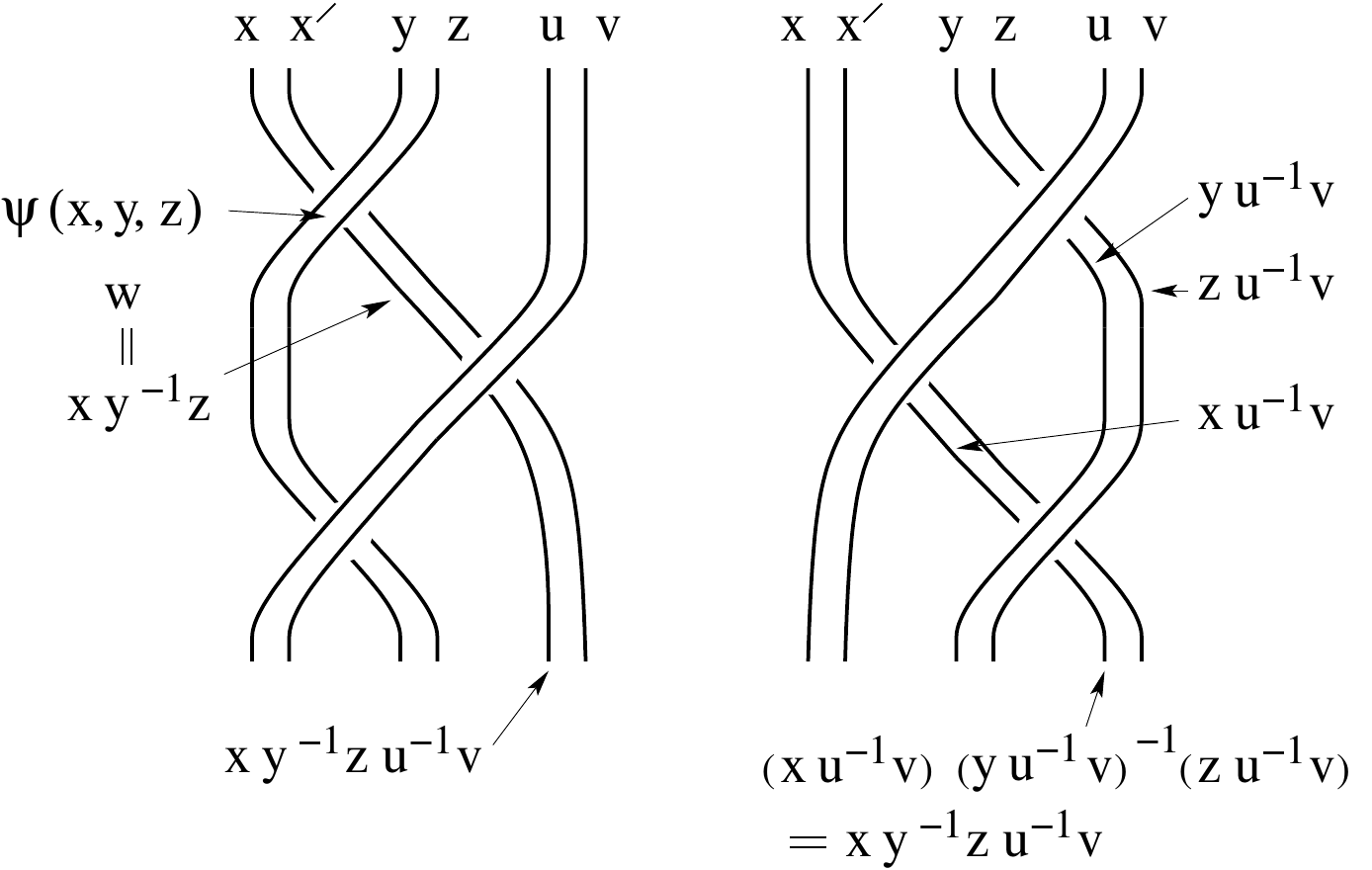}
\end{center}
\caption{Heap and Reidemeister type III move}
\label{heaptypeIII}
\end{figure}

More specifically, a coloring of a  framed link by a group heap $X$ is defined as indicated in Figure~\ref{heaptypeIII}.
A blackboard framed link diagram is thickened to a ribbon diagram with doubled arcs. Each of the doubled arcs is colored by a group element. 
In the top left  of the figure, a doubled crossing is colored by $(x, x')$ and $(y,z)$ at the top arcs.
The left arc below the crossing traced from the arc colored $x$ is colored $w$, which is required 
to be equal to $xy^{-1}z$, the value of heap operation. The other arc is similarly colored by $x' y^{-1}z$. 
At the bottom right of $x$-colored string, the left- and right-hand side of the Reidemeister type III move in Figure~\ref{heaptypeIII}, the implications of the coloring condition are computed. The output
corresponds to 
the ternary self-distributivity, as expected, for the heap operation.
Here the use of a ternary operation is essential; there is no arc colored by an element  obtained only from $x$ and $y$. The simplicity of the expression $xy^{-1}z$ for group elements $x, y, z$  is also notable.

The cocycle invariant is constructed from two 
	procedures. That of taking all possible heap colorings of a diagram, and that of forming, for each given coloring, the product of (Boltzmann) weights over all  crossings.
	The summation is 
	then 	taken over all colorings.
		The  weight  is given by $2$-cocycles of the TSD  cohomology of heaps 
	evaluated for 
	 the elements coloring the four out of eight 
	 (doubled) arcs meeting at a crossing.
 It is crucial therefore to compute TSD cohomology of heaps, or at least to produce large nontrivial classes in the second cohomology group, 
 for a given heap $X$ and an abelian coefficient group $A$.  To this objective we compute the second cohomology group of some heaps and provide 
	examples of nontrivial classes. Moreover, we prove that TSD (co)homology of heaps is obtained as direct sum of two types of (co)homology that we call {\it degenerate} and {\it nondegenerate}, in analogy with the notion
	for the binary case. We prove that degenerate cohomology is in fact 
	generated by a single element, 
	for any heap $X$ and abelian coefficient group $A$, 
	a unique feature to heap cohomology comparing to 
	degenerate cohomology of quandles.
	In order to compute the nondegenerate part we introduce certain subcomplexes, named {\it coset} subcomplexes based on group left cosets, and prove that these fit in long exact sequences. Moreover we prove that the procedure of taking coset subcomplexes and corresponding long exact sequences can be iterated, 
	which is useful in computations of cohomology groups.

	To obtain  colorings and motivated from Wirtinger presentation of knot groups, we  introduce the  invariant of framed links named {\it fundamental heap}, obtained 
		from a link diagram with blackboard framing.
		Generators are assigned to parallel arcs of framed link diagrams, 
	and
the heap relation 
is assigned at each crossing of the diagram. The heap of the group so defined is an invariant of the framed link. We prove that a fundamental heap is  in general given by the free product of a free group 
of the rank equal to the number of components, and another  group.
 For some families of examples, namely some torus and pretzel links, we prove that the non-free part of the fundamental heap is infinite and non-abelian as well, and we prove this by explicitly constructing  epimorphisms to Vinberg groups. In some specific cases we show that it is possible to take the Vinberg groups to be Coxeter groups. 
 Relations to Wirtinger presentation of link groups are presented. 
	
	We apply the computations of cohomology groups and nontrivial $2$-cocycles,
	together with colorings obtained from the fundamental heap, 
	to  compute nontrivial cocycle invariants for families of framed links.
	Moreover, we show that nontriviality of cocycle invariants can be used to derive cohomological properties of the heap used. 
	 In fact, we provide
	lower bounds for the rank of the second cohomology group of dihedral heaps, i.e. heaps associated to dihedral groups, of arbitrary order.
	
	The cocycle invariant defined here can be regarded as the rack cocycle invariant \cite{CJKLS}
	for the rack constructed by doubling the TSD operations as
	$(x,y)*(u,v):=(T(x,u,v), T(y,u,v))$ defined on $X^2$ for a TSD set $X$ and $x,y,u,v \in X$.
	These constructions of new self-distributive sets from old in cascades have been 
	studied in \cite{ESZ}.
	The cascade constructions  in \cite{ESZ}, on the other hand, were motivated from the double string diagrams
	for heap colorings. 
		This novel interpretation via doubled strands of the cocycle invariant has twofold applications. Firstly, it allows to use geometric arguments regarding the nontriviality of families of framed links, to derive algebraic results on the second cohomology groups of heaps. Secondly, we systematically obtain corresponding nontriviality results on the cohomology of doubled racks. Therefore, computations of cohomology specifically applicable to heaps are presented here, and  further developments in using heaps for topological invariants for compact orientable surfaces with boundary embedded in space have been made \cite{SZsfceribbon} as well, where the doubled strand interpretation plays a fundamental role in the definition of the cocycle invariant, which has no clear rack-theoretic analogue. 
	Furthermore, the use of heaps for doubled strands specifically motivates the definition of the fundamental heap, which is of interest on its own, and is one of the main results of this paper.
	Thus we  focus on and specialize to heaps in formulating the invariants throughout the paper,
	instead of reverting to the rack formulation. 

	The organization of the paper is as follows. In Section~\ref{sec:Pre} we give an overview of the basic definitions of heap, ternary self-distributive (TSD) (co)homology and blackboard framings of diagrams representing framed links. In Section~\ref{sec:coh}, the ternary self-distributive cohomology of heaps 
	 is studied. In Section~\ref{sec:Nontrivial} we compute the second cohomology group of some cyclic group heaps, thereby providing some first examples of nontrivial TSD $2$-cocycles of heaps. In Section~\ref{sec:degnondeg} we define two subcomplexes, named degenerate and nondegenerate, and prove that TSD cohomology of heaps splits in the direct sum of degenerate part and nondegenerate part. We prove that the degenerate part is one dimensional and provide some constructive examples of nontriviality of nondegenerate cohomology. 
	 Cocycles are constructed for $\Z_n$ and $D_n$ in Section~\ref{sec:cocy}. 
	 In Section~\ref{sec:coset} we define subcomplexes that simplify computations of nondegenerate cohomology and apply our methods to give examples of computations 
	for 
	dihedral group heaps $D_{n}$.
		 In Section~\ref{sec:fund} we define what we call  the fundamental heap of a framed link and compute it for some family of examples, for which we prove it is infinite by constructing an epimorphism to infinite Vinberg groups. We prove some important properties of the fundamental heap, such as the fact that it is obtained as a free product.
	A relation with the Wirtinger presentation of the knot  group is also presented. In Section~\ref{sec:color} we define heap colorings of ribbon diagrams of framed links and cocycle invariants. 
	We provide several examples of nontrivial invariants, using the computations of Section~\ref{sec:coset}. Moreover we prove a lower bound on the rank of second nondegenerate cohomology group of dihedral group heaps, by means of the cocycle invariant of torus links $T(2,2n)$. 
	Some computations are 
	deferred to the Appendices.

\section{Preliminary}\label{sec:Pre}

In this section we review materials used in this paper. 

\subsection{Heaps}

In this section we recall the definition and basic properties of heaps.
Given a set $X$ with a ternary operation $[ - ]$, the set of equalities 
$$ 
 [  [ x_1, x_2, x_3 ], x_4, x_5  ]=  [  x_1,[  x_4, x_3, x_2 ]  ,  x_5 ] = [  x_1,x_2, [  x_3, x_4, x_5 ]  ]   
$$ 
 is called para-associativity.
The 
equations $[x,x,y]=y$ and $[x, y, y ] = x$ are called the degeneracy conditions.
A {\it heap} is a non-empty set with a ternary operation satisfying 
		the para-associativity  
		and the degeneracy conditions~\cite{ESZheap}.

A typical example of a heap is a group $G$ where the ternary operation is given by $[x,y,z]=xy^{-1}z$,
which we call a {\it group heap}. If $G$ is abelian, we call it an {\it abelian (group) heap}. 
Conversely, 
given a heap $X$ with a fixed element $e$,  
one defines a binary operation on $X$ by $x*y=[x,e,y]$ which makes $(X,*)$ into a group with $e$ as the identity,  and the inverse of $x$ is $[e,x,e]$ for any $x \in X$.  Moreover, the associated group heap coincides with the initial heap structure. Focusing on group heaps is therefore not a strong restriction, as it is always possible to construct a group whose group heap coincides with an arbitrary heap. 
Although the definition of heap is derived from that of para-associativity, which is a ternary generalization of associativity, our focus is on the ternary self-distribitivity 
as described below that heaps satisfy. 

Let $X$ be a set   with a ternary operation  $(x,y,z)\mapsto  T(x,y,z)$.
The condition 
$$ T ( ( x, y,z ) ,  u,v)= T ( T(x,u,v), T(y,u,v) T(z,u,v ) )$$
 for all $x,y,z,u,v \in X$, is called {\it  ternary 
	 self-distributivity}, TSD for short, 
	 and a set with a TSD operation is called {\it ternary self-distributive set}, or TSD set for short. 
It is known and easily checked that the heap operation $(x,y,z)\mapsto  [x,y,z]=T(x,y,z)$ is ternary self-distributive.

\subsection{Ternary self-distributive homology}

The ternary self-distributive (co)homology was studied in \cite{EGM,Green,ESZ} which we review.
Let $X$ be a ternary self-distributive set. The $n$-dimensional chain group $C_n^{\rm SD}(X)$ 
is the free abelian group generated by $(2n - 1)$-tuples $(x_1, x_2, \ldots, x_{2n-1})$.
The boundary operator $d_n: C_n^{\rm SD} (X) \rightarrow  C_{n-1}^{\rm SD} (X)$ 
is defined by 
\begin{eqnarray*}
\lefteqn{ d_n (x_1, x_2, \ldots, x_{2n-1}) =}\\
& &  \sum_{i=1}^{n-1} (-1)^i [\ (x_1, \ldots, \widehat{x_{2i},x_{2i+1}},\ldots, x_{2n-1}) \\
& & \hspace{15mm}
- (x_1 x_{2i}^{-1}x_{2i+1},  \ldots, x_{2i-1} x_{2i}^{-1} x_{2i+1} , \widehat{x_{2i},x_{2i+1}}, x_{2i+2}, \ldots  , x_{2n-1}) \ ] .
\end{eqnarray*}
Cycle, boundary, homology groups are as usual denoted by $Z_n^{\rm SD} (X)$,
$B_n^{\rm SD} (X)$, and  $H_n^{\rm SD} (X)$, respectively, and similar for the cochain, cocycle, coboundary,  and cohomology groups.  Throughout this article we will use the convention that the $n^{\rm th}$ cohomological differential $\delta^n$ is obtained by dualizing the $(n+1)^{\rm st}$ homological differential. Therefore, $2$-cocycles are defined as those $2$-cochains that are in the kernel of $\delta^2$, which is obtained by duallizing $d_3$, 
so that $(\delta^2 \psi) (x_1, \ldots, x_5)= \psi (d_3 (x_1, \ldots, x_5))$ for 
$\psi \in C^3(X)$.

\begin{remark}
	{\rm 
	We point out that the differentials for the TSD (co)homology delete elements in pairs, starting from the pair $(x_2,x_3)$. It is therefore useful to think of the $(2n-1)$-tuples as being grouped as $(x_1,(x_2,x_3), \ldots , (x_{2n - 2},x_{2n-1}))$. As it will be seen below, in Section~\ref{sec:color}, this is also reflected geometrically in the definition of the cocycle invariant. This notation was used in \cite{ESZ}, where the diagrammatic interpretation of the above differentials was given in terms of curtain diagrams. Similar considerations appear also in \cite{NOO}, where a homology theory is defined, where all the entries of the defining chains are paired. We will not explicitly make use of these notations in this article, for simplicity. 
}
\end{remark}

\subsection{Blackboard framing of  link diagrams}

We use the blackboard framing of a diagram to represent a framed link that is commonly used. 
Specifically, a link diagram is thickened to a ribbon to represent a given framing, and thickening is performed on the plane for a given  projection, so that parallel strings of the boundary of a ribbon does not appear twisted on projection or diagrams. We call this thickened diagram a {\it ribbon} diagram.
In Figure~\ref{crossing} (A), a diagram at a crossing is depicted. Labels will be used in later sections.
In (B), a ribbon diagram with double (parallel) arcs is depicted, that represent the framing by the convention of the blackboard framing.

\begin{figure}[htb]
\begin{center}
\includegraphics[width=2.6in]{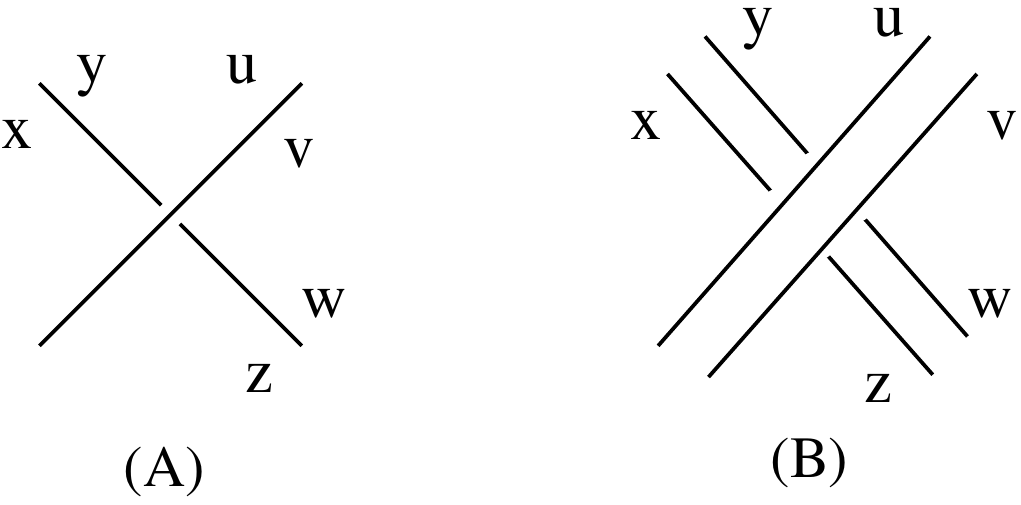}
\end{center}
\caption{Colors at a crossing}
\label{crossing}
\end{figure}

Full twists for framed links are realized by self intersections. For example, Figure~\ref{telecord} represents positive twists being added to two parallel strings representing a trivial arc. Labels in the figure will be used in later sections.
Either orientation of the arc gives rise to a number of positive twists, so that we call them  positive twists, or {\it telephone cord with writhe $n$}, denoted by $C_n$. 
Negative twists are represented by the opposite crossing information at every crossing.
A positive crossing 
is defined to be the crossing in Figure~\ref{crossing} (A) 
with the orientations of both arcs pointing downward.

\begin{figure}[htb]
\begin{center}
\includegraphics[width=3.5in]{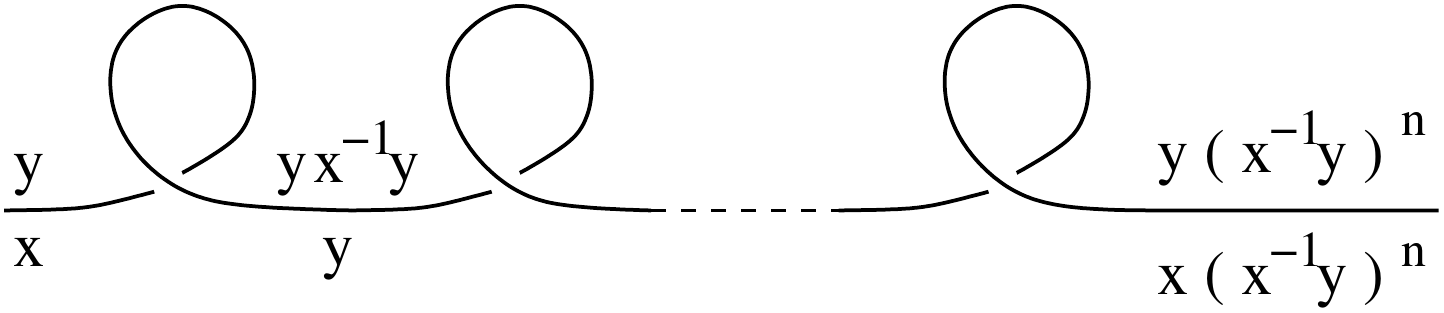}
\end{center}
\caption{Adding positive twists}
\label{telecord}
\end{figure}

\section{Self-distributive cohomology of heaps}\label{sec:coh}

Computations of cocycle invariants rely on constructions of cocycles. 
Since heaps are our focus in this paper,  we start with developing computational methods of TSD cohomology groups for heaps, extensively utilizing group heap structures. 
These methods are of interest on its own for heap cohomology as well.
We present non-trivial  examples, and various subcomplexes that are useful in computations, as well as  having significance from point of view of framed link invariants.

\subsection{Non-triviality of cohomology}\label{sec:Nontrivial}

In this section we provide examples of non-trivial cohomology for a few abelian heaps.
The goal of this section is to motivate and prepare to set up more general computational methods in sections that follow. 
We also construct 2-cocycles for later use of cocycle invariants.
 
Let $X$ be a group heap.   Recall the 2-cocycle condition
 $$
(*) \quad 
\delta^2 \psi(x,y,z,u,v) = \psi(x,y,z)  - \psi(xu^{-1}v,yu^{-1}v,zu^{-1}v) - \psi(x,u,v)  + \psi(xy^{-1}z,u,v) = 0,
$$
where $x,y,z,u,v \in X$.

 \begin{lemma}\label{lem:deg2cocy}
 Let $X$ be a group heap and $A$ an abelian group.
If $\psi \in Z^2_{\rm SD}(X, A)$ is a 2-cocycle, then we have 
$\psi(x,y,y) = \psi(u,v,v)  $ for all $x, y, u, v \in X$.
Furthermore, $\psi=\sum_{ (x, y) \in X^2 } \chi_{(x,y,y)}$ is a 2-cocycle, i.e. $\psi \in  Z^2_{\rm SD}(X, A)$,
	where $\chi_{(x,y,z)} (x',y',z')=\delta(x,x') \delta(y,y') \delta(z,z')$ is the characteristic function with the Kronecker's $\delta$.
 \end{lemma}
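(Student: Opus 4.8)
The plan is to read both assertions straight off the $2$-cocycle identity $(*)$ by specializing its five free variables in two different ways, using only the group-heap degeneracies $xx^{-1}z=z$ and $xy^{-1}y=x$ together with the fact that $X$ carries a genuine group structure (so $e\in X$ and right translations are bijections of $X$).

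First I would set $u=v$ in $(*)$. Then $u^{-1}v=e$, the first two terms of $(*)$ cancel, and what remains is $\psi(xy^{-1}z,u,u)=\psi(x,u,u)$ for all $x,y,z,u\in X$. For fixed $x$ the element $xy^{-1}z$ sweeps out all of $X$ as $(y,z)$ ranges over $X^2$, so $a\mapsto\psi(a,u,u)$ is a constant function; write its value as $f(u)\in A$. Next I would set $z=y$ in $(*)$. Since $xy^{-1}y=x$, the last two terms cancel and we are left with $\psi(x,y,y)=\psi(xu^{-1}v,\,yu^{-1}v,\,yu^{-1}v)$, which in the notation just introduced reads $f(y)=f(yw)$ with $w=u^{-1}v$ an arbitrary element of $X$ (take $u=e$). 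Choosing $w=y^{-1}$ gives $f(y)=f(e)$, so $f$ is constant, equal to $\psi(e,e,e)$; hence $\psi(x,y,y)=\psi(u,v,v)$ for all $x,y,u,v$, which is the first claim.

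For the second claim I would first observe that the (possibly infinite) sum $\sum_{(x,y)\in X^2}\chi_{(x,y,y)}$ is pointwise finite and equals the cochain $\psi_0$ with $\psi_0(a,b,c)=1$ if $b=c$ and $0$ otherwise (I use a fresh name to avoid clashing with the generic $\psi$ above). Then I would just substitute $\psi_0$ into $(*)$: because right multiplication by the fixed element $u^{-1}v$ is a bijection of $X$, we have $yu^{-1}v=zu^{-1}v\iff y=z$, so $\psi_0(xu^{-1}v,yu^{-1}v,zu^{-1}v)=\psi_0(x,y,z)$; likewise $\psi_0(xy^{-1}z,u,v)=\psi_0(x,u,v)$. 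Therefore $\delta^2\psi_0=\psi_0(x,y,z)-\psi_0(x,y,z)-\psi_0(x,u,v)+\psi_0(x,u,v)=0$, so $\psi_0\in Z^2_{\rm SD}(X,A)$.

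I do not expect a genuine obstacle: the lemma is entirely a matter of picking the right two specializations of $(*)$ plus one direct verification. The only points needing a little care are checking that the substituted values are admissible — that $e\in X$ and that $w=u^{-1}v$ really ranges over all of $X$ — and, for the last part, identifying $\sum_{(x,y)}\chi_{(x,y,y)}$ with the ``$b=c$'' indicator and invoking bijectivity of right translation.
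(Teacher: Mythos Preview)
Your proof is correct and follows essentially the same route as the paper: both parts use the same two specializations $u=v$ and $y=z$ of $(*)$ for the first claim, and both verify the second claim by showing the four terms of $(*)$ cancel in the pairs $(1,2)$ and $(3,4)$. Your presentation is slightly slicker---introducing $f(u)=\psi(\cdot,u,u)$ and noting that $\psi_0$ depends only on whether its last two arguments agree avoids the explicit case split the paper writes out---but the substance is identical.
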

 
\begin{proof}
Set $u=v$ in  $(*)$  to obtain
$ \psi(xy^{-1}z,u,u) = \psi(x,u,u)$, and by varying $y, z \in X$ and by variable change, we obtain  $ \psi (x, y, y)=\psi(xz, y, y) $, which implies $\psi(x,y,y) = \psi(u,y,y)$ for all $x,y,u\in X$.
Set $y=z$ in  $(*)$  to obtain 
$ \psi(x,y,y) = \psi(xu^{-1}v,yu^{-1}v,yu^{-1}v) $,  and by varying $u, v \in X$ and by variable change, we obtain $ \psi (x, y, y) = \psi (xz, yz, yz) $. 
Together with  $ \psi (x, y, y)=\psi(xz, y, y) $ we obtain
$ \psi (x, y, y) = \psi (x, yz, yz) $, which shows that $\psi(x,y,y) = \psi(x,v,v)$ for all $x, y, v\in X$. 
Now, combining the two results obtained shows that $\psi(x,y,y) = \psi(u,v,v)$ for all $x, y, u, v \in X$ as asserted, since $\psi(x,y,y) = \psi(u,y,y) = \psi(u,v,v)$. 

Let now $\psi=\sum_{ (x, y) \in X^2 } \chi_{(x,y,y)}$ be a $2$-cochain and let $(z, u_1, v_1, u_2, v_2)$ be a generic $3$-chain upon which we evaluate $\delta^2 \psi$. It is clear that if $u_i \neq v_i$ for $i=1,2$, then the $2$-cocycle condition is satisfied, since all the summands of $\delta^2\psi$ would vanish, since $\psi$ is a sum of characteristic functions of type $\chi_{(x,y,y)}$. When $u_i = v_i$ for at least one $i = 1,2$, we see directly that the summands of the $2$-cocycle condition vanish in pairs. 
Specifically, for $i=1$, $\psi(z,u_1,v_1) \neq 0$ if and only if $\psi(zu_2^{-1}v_2,u_1u_2^{-1}v_2,v_1u_2^{-1}v_2) \neq 0$, in which case their values are the same and 
appear with opposite signs. A similar argument applies  
for the two terms $\psi(z,u_2,v_2)$ and $\psi(zu_1^{-1}v_1,u_2,v_2)$. 
This completes the proof of the lemma. 
\end{proof}

\begin{lemma}\label{lem:core}
 Let $X$ be a group heap and $A$ an abelian group.
If $\psi \in Z^2_{\rm SD}(X, A)$ is a 2-cocycle, then we have 
$\psi(x, y,z)=\psi(x, z, z y^{-1} z ) $ for  all $x, y, z \in X$.
\end{lemma}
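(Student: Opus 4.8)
The plan is to extract the identity directly from the $2$-cocycle condition $(*)$ by a single well-chosen specialization of the ``inner'' variables $u,v$, followed by a change of variables. Concretely, I would substitute $u=y$ and $v=z$ into $(*)$. Then the term $-\psi(x,u,v)$ becomes $-\psi(x,y,z)$ and cancels the term $+\psi(x,y,z)$, while in the term $\psi(xu^{-1}v,\,yu^{-1}v,\,zu^{-1}v)$ the middle entry simplifies via the group identity $yy^{-1}z=z$. Hence $(*)$ collapses to
$$\psi(xy^{-1}z,\,y,\,z)\;=\;\psi(xy^{-1}z,\,z,\,zy^{-1}z).$$

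To finish, I would observe that for fixed $y,z\in X$ the assignment $x\mapsto xy^{-1}z$ is a bijection of $X$ onto itself, with inverse $w\mapsto wz^{-1}y$. Writing $w=xy^{-1}z$ and letting $w$ range over all of $X$, the displayed equality becomes $\psi(w,y,z)=\psi(w,z,zy^{-1}z)$ for all $w,y,z\in X$, which is exactly the assertion after renaming $w$ back to $x$.

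There is essentially no obstacle here beyond guessing the right substitution. In contrast with Lemma~\ref{lem:deg2cocy}, where the productive moves are to collapse $u=v$ or $y=z$, the useful choice here is to \emph{match} the pair $(u,v)$ with $(y,z)$. The only things requiring a bit of care are the bookkeeping of the cancellation and of the simplification $yy^{-1}z=z$ inside $(*)$, and the direction of the final change of variables.
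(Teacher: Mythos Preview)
Your argument is correct and is exactly the paper's approach: the paper's one-line proof says ``This is obtained by setting $y=u$ and $z=v$ in $(*)$ and changing variables,'' which is precisely the substitution $u=y$, $v=z$ followed by the bijective change $x\mapsto xy^{-1}z$ that you carry out in detail.
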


\begin{proof}
This is obtained by setting $y=u$ and $z=v$ in  $(*)$  and changing  variables.
\end{proof}

We note a curious fact that $(y, z ) \mapsto z y^{-1} z$ is a quandle operation called the {\it core} quandle (see, for example, \cite{FR}).

\begin{example}\label{ex:Z2}
	{\rm 
		Let $\Z_n$ be the integers modulo $n$ with abelian heap structure.
			We compute $H^2_{\rm SD}(\Z_2, \Z_n)$. 
By Lemma~\ref{lem:deg2cocy}, as the 2-cocycle condition for either the case $y=z$ or $u=v$ in  $(*)$  we have 
$$\psi(0, 0,0)=\psi(0, 1,1)=\psi(1,0,0)=\psi(1,1,1).$$
\begin{sloppypar}
\noindent
This implies that, when $\psi$ is expressed as a linear sum of characteristic functions $\chi_{(x,y,z)}$,
the coefficients of $\chi_{(0, 0,0)}$, $\chi_{(0, 1,1)}$, $\chi_{(1,0,0)}$, and $\chi_{(1,1,1)}$ are the same.
Next assume  that $y\neq z$ and $ u \neq v$. 
By Lemma~\ref{lem:core} we obtain $ \psi(x,0,1) = \psi(x,1,0)$ for $x =0,1$.
From the proof of Lemma~\ref{lem:core}, this is a consequence of the case $y=u$ and $z=v$.
If $y\neq z$, $u \neq v$  and $y \neq u$, then it follows that $z=u$ and $y=v$.
The same conditions follow from  $z \neq v$, $u \neq v$  and $y \neq u$.
Hence   the only remaining case is $z=u$ and $y=v$.
In  $(*)$  these conditions imply 
\end{sloppypar}
$$ 
\psi(x, y, z) - \psi (xz^{-1} y, y z^{-1} y, y)  - \psi(x, z, y)+ \psi (xy^{-1}z , z, y) =0 . 
$$
Together with $ \psi(x,0,1) = \psi(x,1,0)$, we obtain
$\psi (xy^{-1}z , z, y) = \psi (xz^{-1} y, y z^{-1} y, y)$. 
In $\Z_2$ this holds automatically. In summary the general solution to  $(*)$, 
written in terms of characteristic functions, $\psi$ is written as 
 		$$
		\psi = a (\chi_{(0,0,0)}+ \chi_{(0,1,1)} + \chi_{(1,0,0)}   +\chi_{(1,1,1)} ) + 
		b (\chi_{(0,0,1)} + \chi_{(0,1,0)}) + c(\chi_{(1,0,1)} + \chi_{(1,1,0)}). 
		$$
		Therefore $Z^2_{\rm SD}(\Z_2,\Z_n) = \Z_n^{\oplus 3}$. 
One computes 
$$( \delta^1 \chi_{(u)} )( x,y,z) =	\chi_{(u)} (d_2( x,y,z)) =	\chi_{(u)} ( (x) - (x-y+z) ) , $$
hence by checking all triples $(x,y,z)$ we obtain 
\begin{eqnarray*}
 \delta^1 \chi_{(0)} &=&  \chi_{( 0, 0, 1 )} + \chi_{( 0, 1, 0 )} - \chi_{(1, 0, 1  )} -  \chi_{( 1, 1, 0 )} , \\
  \delta^1 \chi_{(1)} &=& - \chi_{( 0, 0, 1 )} - \chi_{( 0, 1, 0 )} + \chi_{(1, 0, 1  )} +  \chi_{( 1, 1, 0 )} 
  \quad\! = \quad\! - \delta^1 \chi_{(0)} .
  \end{eqnarray*}
Thus we can rewrite $\psi$ as 
$$
		\psi = a (\chi_{(0,0,0)}+ \chi_{(0,1,1)} + \chi_{(1,0,0)}   +\chi_{(1,1,1)} ) + 
		(b+ c) (\chi_{(0,0,1)} + \chi_{(0,1,0)} ) - c	\  \delta^1 \chi_{(0)} 	.
$$
Let $c_0=(0,0,0)$, then $d_2(c_0)=(0)-(0)=0$ (where $(0)$ denotes a 1-chain)  so that $c_0 \in Z_2^{\rm SD}(\Z_2, \Z_n)$, and $\psi(c_0)=a$.
Let $c_1=(0,0,1)+ (1,0,1)$, then $d_2(c_1)=[(0)-(1)]+[(1)-(0)]=0$ so that $c_1 \in Z_2^{\rm SD}(\Z_2, \Z_n)$, 
and $\psi(c_1)=b+c$. Hence 
	we  obtained $H^2_{\rm SD}(\Z_2,\Z_n) \cong \Z_n \oplus \Z_n $. The two factors are generated by $\psi_0=\sum_{ (x, y) \in \Z_2 \times \Z_2} \chi_{(x,y,y)}$ and $\psi_1=\chi_{(0,0,1)}+\chi_{(0,1,0)}$.
		}
\end{example}

\begin{example}\label{ex:Z3}
	{\rm
		Let $\Z_3$ be endowed with abelian heap structure as before. We  
		compute $H^2_{\rm SD}(\Z_3,\Z_n)$. 
			By Lemma~\ref{lem:deg2cocy}, 
	we have that
$\psi_0 =\sum_{(x,y) \in \Z_3 \times \Z_3 }\chi_{(x, y, y)} $ is a 2-cocycle, and 
$\psi(0,0,0) =1$ for a 2-cycle $(0,0,0)$, so that this $\psi_0$ contributes $\Z_n$ to $H^2_{\rm SD}(\Z_3, \Z_n)$. 
Also from  Lemma~\ref{lem:deg2cocy}, for any given 2-cocycle $\psi'$, there is a constant $c$ such that $\psi=\psi'-c\psi_0$ satisfies 
$\psi(x,y,y)=0 $ for all $x, y \in \Z_3$, hence we assume this condition for $\psi$ below. 

By Lemma~\ref{lem:core}, we obtain $\psi (x, y, z)=\psi(x, z, 2z-y)$ for 
all $x, y, z\in \Z_3$. Recall from the Fox tricoloring that $\{y, z, 2z-y\}=\{0,1,2 \} $ for $y \neq z$  in $\Z_3$,
so that we obtain 
\begin{eqnarray}\label{eqn1}
\psi(x, 0, 1)=\psi(x, 1, 2)=\psi (x, 2,0)\quad &{\rm  and}& \quad  
 \psi(x, 0,2)=\psi(x, 2,1)=\psi (x,1,0).
 \end{eqnarray}
Hence $\psi$ is written as 
$$\psi=\sum_{x \in \Z_3}a_1(x)  [ \chi_{(x,0, 1)}+\chi_{(x,1, 2)}+\chi_{(x,2, 0)} ] 
 +  \sum_{x \in \Z_3}a_2(x) [ \chi_{(x,0,2) }+ \chi_{(x,2,1)}+\chi_{(x,1,0)}].$$
The equation   $(*)$   for the cases $z-y=v-u$
gives
$$
\psi(x, y,z)-\psi( x-y+z, y-y+z, z-y+z) -\psi( x, u, v)+ \psi(x-y+z, u,v) =0, 
$$
and  Equalities  (\ref{eqn1}) imply this equation under $z-y=v-u$,
since it holds that $\psi(x, y,z)=\psi( x, u, v)$ and $\psi(x-y+z, u,v)=\psi( x-y+z, z, 2 z-y)$.
 The cases 
$z-y \neq v-u$ remain. 
Since the cases $z-y=0$ or $v-u=0$ are checked under Lemma~\ref{lem:deg2cocy}, 
we have two cases: $z-y=1  \neq v-u =2$ and $z-y=2  \neq v-u =1$.
Under Equalities  (\ref{eqn1}) both cases reduce to 
$$ \psi(x, 0, 1)  + \psi (x+1, 0, 2 )   - \psi(x, 0, 2) - \psi(x+2, 0, 1)  = 0 . $$
Thus we obtain 
$ a_1 (x) + a_2(x+1) - a_2(x) - a_1(x+2)=0 . $
Adding the LHS of  these equations for all $x=0,1,2$ cancel all terms, hence 
these equations for $x=0$ and $x=1$ imply the equation for $x=2$. 
Hence the coefficients of $\psi$ are subject to $ a_1 (0) + a_2(1) - a_2(0) - a_1(2)=0 $
and $ a_1 (1) + a_2(2) - a_2(1) - a_1(0)=0 $.
Then $\psi$ is written as 
\begin{eqnarray*}
\psi&=&   a_1(0) [ \chi_{(0,0, 1)}+\chi_{(0,1, 2)}+\chi_{(0,2, 0)} ] 
 +a_1(1) [ \chi_{(1,0, 1)}+\chi_{(1,1, 2)}+\chi_{(1,2, 0)} ]  \\
& + & a_2(0) [ \chi_{(0,0,2) }+ \chi_{(0,2,1)}+\chi_{(0,1,0)}]
+ a_2(1) [ \chi_{(1,0,2) }+ \chi_{(1,2,1)}+\chi_{(1,1,0)}] \\
 &+&   (a_1(0)-a_2(0)+a_2(1) )  [ \chi_{(2,0, 1)}+\chi_{(2,1, 2)}+\chi_{(2,2, 0)} ] \\
 &+&
 (a_2 (1) + a_1(0) - a_1 (1) )  [ \chi_{(2,0,2) }+ \chi_{(2,2,1)}+\chi_{(2,1,0)}]
 \end{eqnarray*}
and we obtain $Z^2_{\rm SD}(\Z_3, \Z_n) \cong   \Z_n \oplus   \Z_n^{\oplus 4}$. 
One computes 
$$\delta^1\chi_{( u ) }=  \sum_{x=u, \,  u-y+z \neq  u }  \chi_{ ( x , y, z) }   -   \sum_{  x \neq  u, \, x -y+z = u ,  \, y\neq z } \chi_{(x , y, z) } . $$ 
We have $$\sum_{u \in \Z_3} \delta^1\chi_{( u ) }
= \sum_{ y \neq z }  \chi_{ ( x , y, z) }   -   \sum_{   y\neq z } \chi_{(x , y, z) } = 0 ,$$ so that 
$ {\rm Im} \delta^1$ has rank 2, generated by $\delta^1\chi_{( 0 ) }$ and $\delta^1\chi_{( 1 ) }$.
The  cocycle $\delta^1\chi_{( 0 ) }$ satisfies $a_1(0)=a_2(0)$ and all the other coefficients are identical,
and $\delta^1\chi_{( 1 ) }$ satisfies $a_1(1)=a_2(1)$ and all the other coefficients are identical.
Hence we  have $H^2_{\rm SD}(\Z_3, \Z_n)\cong  \Z_n \oplus   \Z_n^{\oplus 2}$. 
	}
\end{example}

We note that in both examples above there is a 
non-trivial cocycle
$\psi=\sum_{(x, y) \in X^2}\chi_{(x,y,y)}$ contributing one direct summand  to $H^2_{\rm SD}(X,A)$, and other cocycles 
involving  $\chi_{(x,y,z)}$ with $y\neq z$. This motivates us to define {\it degenerate} and {\it nondegenerate} subcomplexes in the next section for further computations.

\subsection{Degenerate and nondegenerate subcomplexes}\label{sec:degnondeg}

In this section we define degenerate subcomplexes and investigate the resulting long exact sequence,
which shows properties analogous to degenerate subcomplex in rack and quandle homology theories~\cite{LitherlandNelson}.

\begin{definition}\label{def:degenerate}
{\rm

Let $X$ be a heap.
Define the n-dimensional {\it degenerate} heap chain subgroup $C_n^{\rm DH}(X)$ to be the subgroup of
$C_n^{\rm SD} (X)$ generated by $(2n-1)$-tuples $(x_1, x_2, \ldots, x_{2n-1})$ such that 
$x_{2i}= x_{2i+1}$ 
for some $i >0$.

}
\end{definition}

\begin{lemma}\label{lem:degenerate}
The restriction of the boundary operator $d_n: C_n^{\rm SD} (X) \rightarrow  C_{n-1}^{\rm SD} (X)$ 
on  $C_n^{\rm DH}(X)$ defines a subcomplex $(C_n^{\rm DH}(X) ,d_n)$, which we call the {\it degenerate subcomplex}. 
\end{lemma}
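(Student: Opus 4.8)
The plan is to show that $d_n$ maps $C_n^{\rm DH}(X)$ into $C_{n-1}^{\rm DH}(X)$; since $(C_n^{\rm SD}(X), d_n)$ is already a complex, this immediately gives that the restriction is a subcomplex. So the only thing to verify is the stability of the degeneracy condition under the boundary. Concretely, fix a generator $(x_1, \ldots, x_{2n+1})$ with $x_{2j} = x_{2j+1}$ for some $j$ with $1 \le j \le n$, and I want to check that every term appearing in $d_n(x_1, \ldots, x_{2n+1})$ either vanishes or again has a repeated consecutive pair in an even-odd slot.

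Recall from the definition that
\begin{eqnarray*}
\lefteqn{d_n (x_1, \ldots, x_{2n+1}) =}\\
& & \sum_{i=1}^{n} (-1)^i \big[\, (x_1, \ldots, \widehat{x_{2i},x_{2i+1}},\ldots, x_{2n+1}) \\
& & \hspace{10mm} - (x_1 x_{2i}^{-1}x_{2i+1}, \ldots, x_{2i-1} x_{2i}^{-1} x_{2i+1}, \widehat{x_{2i},x_{2i+1}}, x_{2i+2}, \ldots, x_{2n+1}) \,\big].
\end{eqnarray*}
First I would treat the term with $i = j$. In that term the pair $(x_{2j}, x_{2j+1})$ is deleted, and since $x_{2j} = x_{2j+1}$ the element $x_{2j}^{-1} x_{2j+1} = e$, so the second summand also just deletes the pair without altering the other entries. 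Thus both summands for $i=j$ equal $(x_1, \ldots, \widehat{x_{2j}, x_{2j+1}}, \ldots, x_{2n+1})$, and they come with opposite signs, so they cancel in $d_n$ (alternatively, they land in $C_{n-1}^{\rm DH}(X)$ trivially as a zero chain — but cancellation is cleaner to state).

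For a term with $i \ne j$, deleting the pair $(x_{2i}, x_{2i+1})$ leaves the pair $(x_{2j}, x_{2j+1})$ intact as a consecutive pair, though its positional index shifts: if $i < j$ it moves from slot $(2j, 2j+1)$ to slot $(2j-2, 2j-1)$, and if $i > j$ it stays in slot $(2j, 2j+1)$; in either case it remains an (even, odd) adjacent pair, hence the first summand lies in $C_{n-1}^{\rm DH}(X)$. For the second summand with $i \ne j$, the entries to the left of the deleted pair get right-multiplied by $x_{2i}^{-1} x_{2i+1}$ while the entries to the right are untouched. Here I need to consider whether the pair $(x_{2j}, x_{2j+1})$ straddles position $2i$. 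If $j > i$, both $x_{2j}$ and $x_{2j+1}$ lie to the right of the deleted pair, so they are left unchanged and still coincide. If $j < i$, both $x_{2j}$ and $x_{2j+1}$ lie to the left, so each is multiplied by the same element $x_{2i}^{-1} x_{2i+1}$ on the right, and equal inputs produce equal outputs, so the pair still coincides (again with an index shift by $2$, staying in an even-odd slot). The case $j = i$ was already handled. Hence every surviving term of $d_n$ applied to a degenerate generator is again a degenerate chain, proving $d_n(C_n^{\rm DH}(X)) \subseteq C_{n-1}^{\rm DH}(X)$.

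I do not anticipate a genuine obstacle here; the content is entirely bookkeeping of indices under face maps. The one point that warrants a little care is the index shift: after deleting the $i$-th pair, what was position $2j$ becomes position $2j-2$ when $i < j$, and one must confirm this preserves the parity condition "some even position equals the next odd position," which it does since $2j-2$ is again even. I would state this shift explicitly so the reader can check that $C^{\rm DH}$ is stable and not, say, accidentally picking up terms with a repeated pair in an odd-even slot. With that, the lemma follows.
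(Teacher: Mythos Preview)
Your proof is correct and follows essentially the same approach as the paper: you split the summands of $d_n$ according to whether $i$ equals the degenerate index $j$, observe cancellation when $i=j$ (since $x_{2j}^{-1}x_{2j+1}$ acts trivially), and note that the degenerate pair survives in both summands when $i\neq j$. The paper's argument is terser and does not spell out the index shift, but the content is the same.
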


\begin{proof}
Let $(x_1, \ldots,  x_{2n-1})$ be  an $n$-chain  such that $x_{2j} = x_{2j+1}$ for some $j = 1, \ldots , n-1$. By definition of ternary self-distributive differential $d_n$ we have
\begin{eqnarray*}
\lefteqn{ d_n (x_1, x_2 , \ldots , x_{2n-1}) =}\\
& &  \sum_{i=1}^{n-1} (-1)^i [\ (x_1, \ldots, \widehat{x_{2i},x_{2i+1}},\ldots , x_{2n-1}) - (x_1 x_{2i}^{-1}x_{2i+1},  \ldots, \widehat{x_{2i},x_{2i+1}},\ldots  ,x_{2n-1}) \ ] .
\end{eqnarray*}
For all $i$'s such that  $x_{2i} \neq x_{2i+1}$ we have that both terms 
$$(x_1, \ldots, \widehat{x_{2i},x_{2i+1}},\ldots , x_{2n-11})\quad {\rm  and } \quad (x_1 x_{2i}^{-1}x_{2i+1},  \ldots, \widehat{x_{2i},x_{2i+1 }},\ldots  , x_{2n-1}) $$
 are in $C_{n-1}^{\rm DH}(X)$, since there exists some $j\neq i$ such that $x_{2j} = x_{2j+1}$. For all $i$'s such that $x_{2i-1} = x_{2i}$ we have that 
 $$(x_1, \ldots, \widehat{x_{2i},x_{2i+1}},\ldots , x_{2n-1} ) \quad {\rm  and } \quad (x_1 x_{2i-1 }^{-1}x_{2i},  \ldots, \widehat{x_{2i},x_{2i+1}},\ldots  ,x_{2n-1}) $$ coincide. Thus the differential $d_n$ restricts to a well defined differential on $C_n^{\rm DH}(X)$. 
\end{proof}

\begin{definition}\label{def:nondegenerate}
	{\rm 
  Let us consider 
  a similar 
   situation as in Definition~\ref{def:degenerate} and define the 
  {\it nondegenerate heap chain subgroup}  $C_n^{\rm NDH}(X)$ to be the subgroup of $C_n^{\rm SD}(X)$ generated by $n$-chains $(x_1, x_2, \ldots, x_{2n-1})$ where $x_{2i} \neq x_{2i+1}$ for all $i = 1, \ldots , n-1$. 
}
\end{definition}

\begin{lemma}\label{lem:nondegenerate}
 The boundary operator $d_n$ restricts to a well defined differential on $C_n^{\rm NDH}(X)$ therefore defining a subcomplex, which we call the  {\it nondegenerate subcomplex}. 
\end{lemma}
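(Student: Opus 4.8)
The plan is to mimic the argument used for Lemma~\ref{lem:degenerate}, but now tracking the complementary condition: we must verify that if $(x_1,\dots,x_{2n+1})$ is a nondegenerate generator, i.e.\ $x_{2i}\neq x_{2i+1}$ for all $i=1,\dots,n$, then $d_n(x_1,\dots,x_{2n+1})$ is a $\Z$-linear combination of $(2(n-1)+1)$-tuples that are again nondegenerate, hence lies in $C_{n-1}^{\rm NDH}(X)$. Inspecting the formula for $d_n$, the $i$-th summand is
\begin{eqnarray*}
(-1)^i\big[\,(x_1,\dots,\widehat{x_{2i},x_{2i+1}},\dots,x_{2n+1})
- (x_1 x_{2i}^{-1}x_{2i+1},\dots,x_{2i-1}x_{2i}^{-1}x_{2i+1},\widehat{x_{2i},x_{2i+1}},x_{2i+2},\dots,x_{2n+1})\,\big].
\end{eqnarray*}
First I would observe that deleting the pair $(x_{2i},x_{2i+1})$ leaves the remaining entries in their original relative positions: the entries that were in even/odd slots $2j,2j+1$ for $j<i$ are unchanged, and for $j>i$ the old pair $(x_{2j},x_{2j+1})$ now occupies slots $(2j-2,2j-1)$, again as an even/odd pair. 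So in the first term of the $i$-th summand the new adjacent even/odd pairs are exactly $\{(x_{2j},x_{2j+1}) : j\neq i\}$, each of which is unequal by hypothesis; hence that term is nondegenerate.

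Next I would treat the second term, where every surviving entry $x_k$ with $k<2i$ is replaced by $x_k x_{2i}^{-1}x_{2i+1}$, i.e.\ by the group heap operation $[x_k,x_{2i},x_{2i+1}]$, while the entries with $k>2i+1$ are left alone. The key point is that right multiplication by the fixed group element $x_{2i}^{-1}x_{2i+1}$ is a bijection of $X$: so for $j<i$ we have $x_{2j}x_{2i}^{-1}x_{2i+1} = x_{2j+1}x_{2i}^{-1}x_{2i+1}$ if and only if $x_{2j}=x_{2j+1}$, which is false; for $j>i$ the pair is untouched and still unequal; and the only newly created adjacency, between slots $2i-1$ and $2i$, is between $x_{2i-1}x_{2i}^{-1}x_{2i+1}$ and $x_{2i+2}$, but these sit in an odd/even position, not an even/odd position, so they are not constrained by the nondegeneracy condition and are irrelevant. (One should note, consistently with the re-indexing above, that the surviving entries assemble into even/odd pairs indexed by $j\neq i$, matching the first term.) Therefore the second term is also nondegenerate, and every summand of $d_n(x_1,\dots,x_{2n+1})$ lies in $C_{n-1}^{\rm NDH}(X)$.

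I do not expect a serious obstacle here; the only thing that requires care is the bookkeeping of indices after deletion of the $i$-th pair and making sure the nondegeneracy condition is imposed precisely on adjacent even/odd pairs (so that the newly adjacent odd/even pair created in the second term does not matter), together with the trivial but essential remark that translation by $x_{2i}^{-1}x_{2i+1}$ is injective. With these points in place, $d_n(C_n^{\rm NDH}(X))\subseteq C_{n-1}^{\rm NDH}(X)$, so $d_n$ restricts to a differential on $C_\bullet^{\rm NDH}(X)$ and $(C_n^{\rm NDH}(X),d_n)$ is a subcomplex.
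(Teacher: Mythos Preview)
Your proposal is correct and follows essentially the same approach as the paper: the key observation in both is that right multiplication by $x_{2i}^{-1}x_{2i+1}$ is injective, so $x_{2j}x_{2i}^{-1}x_{2i+1} = x_{2j+1}x_{2i}^{-1}x_{2i+1}$ if and only if $x_{2j}=x_{2j+1}$. The paper's proof is a one-line version of yours, stating only this injectivity fact for the second (heap-action) term and leaving the first (deletion) term and the index bookkeeping implicit; your more careful treatment of the re-indexing and the irrelevance of the odd/even adjacency is a welcome elaboration but not a different argument.
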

\begin{proof}
	Observe that $(x_1x_{2i}^{-1}x_{2i+1},\ldots , x_{2i-2}x_{2i}^{-1}x_{2i+1}, x_{2i-1}x_{2i}^{-1}x_{2i+1}, \widehat{x_{2i},x_{2i+1}},\ldots , x_{2n-1} )$ is an element of $C_{n-1}^{\rm NDH}(X)$ for all $i$'s since $x_{2j}x_{2i}^{-1}x_{2i+1} = x_{2j+1}x_{2i}^{-1}x_{2i+1}$ if and only if $x_{2j} = x_{2j+1}$. 
\end{proof}

Then the long exact sequence of 
corresponding to the short exact sequence of complexes
 $$
0 \rightarrow C_\bullet^{\rm DH}(X) \rightarrow  C_\bullet^{\rm SD} (X) \rightarrow  C_\bullet^{\rm SD} (X) / C_\bullet^{\rm DH}(X) \rightarrow 0
$$
decomposes into split short exact sequences 
$$
0 \rightarrow C_n^{\rm DH}(X) \rightarrow  C_n^{\rm SD} (X) \rightarrow  C_n^{\rm SD} (X) / C_n^{\rm DH}(X) \rightarrow 0.
$$
 Since $C_n^{\rm H} (X) / C_n^{\rm DH}(X)$ is isomorphic to the nondegenerate subcomplex $C_n^{\rm NDH}(X)$ and, moreover, the differentials respect the splitting at each $n$ from Lemmas~\ref{lem:degenerate} and~\ref{lem:nondegenerate},
we obtain a split short exact sequence 
$$0 \rightarrow H_n^{\rm DH}(X) \rightarrow  H_n^{\rm SD} (X) \rightarrow  H_n^{\rm NSD} (X) \rightarrow 0.
$$
This implies that the self-distributive homology groups decompose as the direct sum of degenerate and nondegenerate homologies
as follows. 

\begin{proposition}\label{prop:DH}
Let $X$ be a heap. Then ternary self-distributive homology is given by 
$$
H_n^{\rm SD}(X) \cong H_n^{\rm DH}(X)\oplus H_n^{\rm NDH}(X).
$$
\end{proposition}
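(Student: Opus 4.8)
The plan is to deduce the direct sum decomposition directly from the short exact sequence of chain complexes
\[
0 \rightarrow C_\bullet^{\rm DH}(X) \rightarrow C_\bullet^{\rm SD}(X) \xrightarrow{\;\pi\;} C_\bullet^{\rm SD}(X)/C_\bullet^{\rm DH}(X) \rightarrow 0,
\]
by exhibiting an explicit splitting at the level of chain groups that commutes with the differentials. First I would observe that, as a free abelian group, $C_n^{\rm SD}(X)$ has a basis of $(2n+1)$-tuples that partitions into those with $x_{2i}=x_{2i+1}$ for some $i$ (spanning $C_n^{\rm DH}(X)$) and those with $x_{2i}\neq x_{2i+1}$ for all $i$ (spanning the subgroup $C_n^{\rm NDH}(X)$ of Definition~\ref{def:nondegenerate}). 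Hence as graded abelian groups $C_n^{\rm SD}(X) = C_n^{\rm DH}(X) \oplus C_n^{\rm NDH}(X)$, and the composite $C_n^{\rm NDH}(X) \hookrightarrow C_n^{\rm SD}(X) \xrightarrow{\pi} C_n^{\rm SD}(X)/C_n^{\rm DH}(X)$ is an isomorphism; this gives the claimed section $s_n$ of $\pi$.

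The crucial point is then that this section is a chain map, i.e. that the quotient complex is \emph{isomorphic as a complex} to the nondegenerate subcomplex, not merely abstractly quasi-isomorphic. This is exactly what Lemma~\ref{lem:nondegenerate} provides: the proof of that lemma shows that when $d_n$ is applied to a basis element of $C_n^{\rm NDH}(X)$, every term occurring is again a nondegenerate basis element, because $x_{2j}x_{2i}^{-1}x_{2i+1} = x_{2j+1}x_{2i}^{-1}x_{2i+1}$ iff $x_{2j}=x_{2j+1}$. Therefore $d_n(C_n^{\rm NDH}(X)) \subseteq C_{n-1}^{\rm NDH}(X)$, so under the identification $C_n^{\rm SD}(X) = C_n^{\rm DH}(X)\oplus C_n^{\rm NDH}(X)$ the differential $d_n$ is block upper-triangular: it preserves $C^{\rm DH}_\bullet$ (Lemma~\ref{lem:degenerate}) and also preserves the complementary summand $C^{\rm NDH}_\bullet$. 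A block-diagonal differential on a direct sum of graded abelian groups gives a direct sum of complexes, and homology commutes with finite direct sums, yielding $H_n^{\rm SD}(X) \cong H_n^{\rm DH}(X) \oplus H_n^{\rm NDH}(X)$.

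Concretely I would carry out the steps in this order: (1) record the basis partition of $C_n^{\rm SD}(X)$ and conclude $C_n^{\rm SD}(X) = C_n^{\rm DH}(X) \oplus C_n^{\rm NDH}(X)$ as abelian groups, with $C_n^{\rm NDH}(X) \cong C_n^{\rm SD}(X)/C_n^{\rm DH}(X)$; (2) invoke Lemma~\ref{lem:degenerate} to see $d_n$ restricts to $C_\bullet^{\rm DH}(X)$, and Lemma~\ref{lem:nondegenerate} to see $d_n$ restricts to $C_\bullet^{\rm NDH}(X)$, so that with respect to the above decomposition $d_n$ has no off-diagonal components; (3) conclude $(C_\bullet^{\rm SD}(X), d_\bullet) \cong (C_\bullet^{\rm DH}(X), d_\bullet) \oplus (C_\bullet^{\rm NDH}(X), d_\bullet)$ as chain complexes; (4) pass to homology, using that $H_n(-)$ takes finite direct sums of complexes to direct sums. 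One may phrase (2)--(4) equivalently via the long exact sequence of the short exact sequence above, noting that the section $s_\bullet$ splits it at chain level and commutes with differentials, hence the connecting homomorphisms vanish and the long exact sequence breaks into split short exact sequences $0 \to H_n^{\rm DH}(X) \to H_n^{\rm SD}(X) \to H_n^{\rm NDH}(X) \to 0$.

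The main obstacle — really the only content beyond bookkeeping — is verifying that $d_n$ genuinely sends $C_n^{\rm NDH}(X)$ into $C_{n-1}^{\rm NDH}(X)$ rather than merely into $C_{n-1}^{\rm SD}(X)$; this is what makes the complementary summand a subcomplex and forces the differential to be block-diagonal. That verification is precisely the equivalence $x_{2j}x_{2i}^{-1}x_{2i+1} = x_{2j+1}x_{2i}^{-1}x_{2i+1} \iff x_{2j}=x_{2j+1}$ already established in Lemma~\ref{lem:nondegenerate}, so for the proof of the proposition it suffices to cite that lemma together with Lemma~\ref{lem:degenerate}. Everything else — the basis partition, the identification of the quotient with $C^{\rm NDH}_\bullt$, and the behaviour of homology under direct sums — is routine.
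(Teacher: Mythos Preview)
Your proposal is correct and follows essentially the same approach as the paper: the paper also argues via the short exact sequence of complexes, observes that the quotient $C_\bullet^{\rm SD}(X)/C_\bullet^{\rm DH}(X)$ is isomorphic to the nondegenerate subcomplex $C_\bullet^{\rm NDH}(X)$, and invokes Lemmas~\ref{lem:degenerate} and~\ref{lem:nondegenerate} to conclude that the differentials respect the splitting at each degree. Your write-up is in fact more explicit about the block-diagonal structure of the differential than the paper's terse paragraph, but the underlying argument is identical.
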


\begin{definition}
	{\rm 
By dualization, given an abelian group $A$, we obtain degenerate and nondegenerate cohomologies, indicated by $H^n_{\rm DH}(X,A)$ and $H^n_{\rm NDH}(X,A)$, respectively. As in the case of homology, ternary self-distributive cohomology splits into direct sum of degenerate and nondegenerate parts. 
}
\end{definition}

We determine the degenerate cohomology of dimension 2.

\begin{proposition}\label{prop:degcohomology}
	Let $X$ be a finite group heap  and $A=\Z_n$. 
	Then the degenerate second cohomology group with coefficients in $A$ is given by
	$$
	H^2_{\rm DH}(X,A) \cong A . 
	$$ 
\end{proposition}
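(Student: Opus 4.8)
The statement concerns the degenerate subcomplex $C_\bullet^{\rm DH}(X)$ in dimensions near $2$. The strategy is to compute $H^2_{\rm DH}(X,A)$ directly from an explicit description of the dual complex $C^1_{\rm DH}\to C^2_{\rm DH}\to C^3_{\rm DH}$, using the structural fact established in Lemma~\ref{lem:deg2cocy}: any $2$-cocycle $\psi$ of the full complex restricted to degenerate chains is constant on all triples of the form $(x,y,y)$, and the value of $\psi$ on any degenerate $2$-chain whatsoever is determined by the single constant $\psi(x,y,y)$. Dually, a degenerate $2$-cocycle is essentially a single element of $A$, which will produce the summand $A$; the task is to check that no part of it is a coboundary and that there are no further independent classes.

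First I would set up notation for the degenerate chain groups. By Definition~\ref{def:degenerate}, $C_2^{\rm DH}(X)$ is generated by $5$-tuples $(x_1,x_2,x_3,x_4,x_5)$ with $x_2=x_3$ or $x_4=x_5$, and $C_1^{\rm DH}(X)$ is generated by triples $(x_1,x_2,x_3)$ with $x_2=x_3$. Next I would record the restricted differentials: $d_2$ on a degenerate generator $(x,u,v,p,q)$ (where, say, $u=v$) sends it into $C_1^{\rm DH}(X)$ via the two telescoping terms surviving from the formula — and here the key cancellation noticed in the proof of Lemma~\ref{lem:degenerate} applies, namely that for the index $i$ at which $x_{2i}=x_{2i+1}$ the two summands $(x_1,\ldots,\widehat{x_{2i},x_{2i+1}},\ldots)$ and $(x_1x_{2i}^{-1}x_{2i+1},\ldots,\widehat{x_{2i},x_{2i+1}},\ldots)$ are literally equal and cancel. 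I would likewise write out $d_3:C_3^{\rm DH}\to C_2^{\rm DH}$ just enough to identify which $2$-cochains are cocycles.

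Then I would argue in two steps. \emph{Cocycles:} a degenerate $2$-cochain $\psi$ (i.e. an element of $\Hom(C_2^{\rm DH}(X),A)$, equivalently a function on degenerate $5$-tuples) that is a cocycle must satisfy, by the same substitutions $u=v$ and $y=z$ used in Lemma~\ref{lem:deg2cocy} but now applied internally to the degenerate complex, the relation $\psi(x,y,y,\ast,\ast)=\psi(x',y',y',\ast,\ast)$ — concretely, every degenerate cocycle is a constant multiple of $\psi_0=\sum_{(x,y)\in X^2}\chi_{(x,y,y)}$ together with the analogous ``second-slot-degenerate'' generator, and the cocycle condition forces these two constants to agree, so $Z^2_{\rm DH}(X,A)\cong A$, generated by the image of $\psi_0$. \emph{Coboundaries:} I would show $\delta^1:C^1_{\rm DH}\to C^2_{\rm DH}$ has image meeting $Z^2_{\rm DH}(X,A)$ trivially — the generator $\psi_0$ is not a coboundary because, as already exhibited in Examples~\ref{ex:Z2} and~\ref{ex:Z3}, evaluating it on a genuine degenerate $2$-cycle such as $c_0=(e,e,e,e,e)$ (with $d_2 c_0=0$) gives the nonzero value $1$, while any coboundary vanishes on cycles. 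Combining, $H^2_{\rm DH}(X,A)=Z^2_{\rm DH}(X,A)/B^2_{\rm DH}(X,A)\cong A$.

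\textbf{Main obstacle.} The delicate point is the bookkeeping in the \emph{cocycles} step: I must verify that imposing the $2$-cocycle condition \emph{within} the degenerate complex (where $\delta^2\psi$ is evaluated only on degenerate $3$-chains $(z,u_1,v_1,u_2,v_2)$ with $u_1=v_1$ or $u_2=v_2$) still collapses the space of degenerate $2$-cochains all the way down to the single constant — in principle the degenerate complex could have \emph{more} cocycles than the restriction of global cocycles, so I cannot simply quote Lemma~\ref{lem:deg2cocy} verbatim and must redo the substitution argument with every chain kept degenerate. A secondary subtlety is confirming that the two families of degenerate $5$-tuples ($x_2=x_3$ versus $x_4=x_5$) do not contribute independently; this is handled by the inner cancellation in $d_2$ noted above, which shows a cochain's value on an $x_4=x_5$-degenerate tuple is pinned, via $d_3$, to its value on $x_2=x_3$-degenerate tuples.
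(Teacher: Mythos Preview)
Your strategy—show every degenerate $2$-cocycle is a single constant via the substitutions of Lemma~\ref{lem:deg2cocy}, then verify nontriviality by pairing with a degenerate $2$-cycle—is precisely the paper's, and you are right that both substitutions $y=z$ and $u=v$ already land in the degenerate complex, so the lemma applies there without change. The problem is a systematic indexing error. In the paper's actual usage (see Section~\ref{sec:Nontrivial} and the proof of this very proposition), a $2$-cochain is a function on \emph{triples} $(x,y,z)$, the condition $(*)$ is tested on $5$-tuples, and the degenerate $2$-chain group is generated by triples $(x,y,y)$—one degenerate slot only. Your setup instead puts $5$-tuples in degree~$2$, which manufactures the ``two families'' ($x_2=x_3$ versus $x_4=x_5$) you worry about and makes the write-up internally inconsistent: $\psi_0=\sum\chi_{(x,y,y)}$ is a function on triples, yet you declare degenerate $2$-cochains to be functions on $5$-tuples and then try to pair $\psi_0$ with $c_0=(e,e,e,e,e)$.

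Once the indexing is fixed the proof is as short as the paper's: degenerate $2$-cochains are functions $\psi(x,y,y)$; Lemma~\ref{lem:deg2cocy} (whose proof uses only degenerate $3$-chains) forces $\psi(x,y,y)=\psi(u,v,v)$, so $Z^2_{\rm DH}(X,A)\cong A$ generated by $\psi_0$; and $(u,u,u)$ is a degenerate $2$-cycle with $\psi_0(u,u,u)=1$, so $[\psi_0]$ is nontrivial and $H^2_{\rm DH}(X,A)\cong A$. Your ``secondary subtlety'' about reconciling two families via $d_3$ is purely an artifact of the index shift and does not arise.
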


\begin{proof}
From Lemma~\ref{lem:deg2cocy}, 
it follows  that $\psi=\sum_{(x,y)\in X^2 } \chi_{(x,y,y)}$ is a 2-cocycle in $Z^2_{\rm DH}(X,A)$.
For $u \in X$, $(u,u,u)$ is a 2-cycle and  $\psi (u,u,u)=1$. Hence   $H^2_{\rm DH}(X,A) \cong A $ generated by $[\psi]$. 
\end{proof}

Next we show non-triviality of nondegenerate second cohomology for group heaps  with elements of even order.

 \begin{lemma}\label{lem:2cocy}
 Let $X$ be a ring regarded as an abelian group heap by ring addition,  and let $a, b, c \in X$. 
 Let $\psi_{(a,b,c)} (x, y, z) =(ax + b(z-y) + c ) (z-y) $. Then $\psi_{(a,b,c)} $  is an $X$-valued 2-cocycle, $\psi_{(a,b,c)}  \in Z^2_{\rm SD} (X, X)$.
 \end{lemma}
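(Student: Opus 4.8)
The plan is to verify the 2-cocycle condition $(*)$ directly by substituting the proposed formula $\psi_{(a,b,c)}(x,y,z) = (ax + b(z-y) + c)(z-y)$ into
$$\psi(x,y,z) - \psi(xu^{-1}v, yu^{-1}v, zu^{-1}v) - \psi(x,u,v) + \psi(xy^{-1}z, u, v)$$
and checking that the result vanishes identically. Since $X$ is a ring viewed as an abelian heap, the heap operation is $[x,y,z] = x - y + z$, so $xu^{-1}v$ becomes $x - u + v$, $xy^{-1}z$ becomes $x - y + z$, and so on. A useful bookkeeping device is to introduce the shorthand $s = z - y$ and $t = v - u$; then the four arguments contribute the "$(z-y)$-type" quantities $s$, $s$ (since $(z - u + v) - (y - u + v) = z - y = s$), $t$, and $t$ respectively, while the "$x$-type" leading terms are $x$, $x - u + v$, $x$, $x - y + z = x + s$.

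With this notation the four terms become, in order, $(ax + bs + c)s$, then $-(a(x - u + v) + bs + c)s = -(ax + a(v-u) + bs + c)s = -(ax + at + bs + c)s$, then $-(ax + bt + c)t$, and finally $(a(x+s) + bt + c)t = (ax + as + bt + c)t$. Summing: the first two terms give $-(at)s = -ast$, and the last two terms give $(as)t = ast$, which cancel. Hence $\delta^2\psi_{(a,b,c)} = 0$ and $\psi_{(a,b,c)} \in Z^2_{\rm SD}(X,X)$.

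I expect no real obstacle here; this is a routine computation. The only point requiring a little care is correctly translating the abstract heap expressions $xu^{-1}v$ and $xy^{-1}z$ into additive notation $x - u + v$ and $x - y + z$, and then observing the crucial cancellation that the $s$-terms and $t$-terms pair up because the two arguments $(x,y,z)$ and $(xu^{-1}v, yu^{-1}v, zu^{-1}v)$ have the \emph{same} difference $z - y$, while $(x,u,v)$ and $(xy^{-1}z,u,v)$ have the same difference $v - u$. It may be worth remarking, as a sanity check against the earlier examples, that setting $a = 0$, $c = 0$ and $b = 1$ recovers (a representative of) the nondegenerate generator $\psi_1$ of Example~\ref{ex:Z2}, since $\psi_{(0,1,0)}(x,y,z) = (z-y)^2$, which in $\Z_2$ is supported exactly on triples with $y \neq z$.
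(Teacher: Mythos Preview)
Your proof is correct and follows essentially the same approach as the paper: a direct substitution into the 2-cocycle condition $(*)$, showing the four terms cancel. Your shorthand $s=z-y$, $t=v-u$ makes the key cancellation $-(at)s + (as)t = 0$ more transparent than the paper's fully expanded version, but the argument is the same; the side remark about $\psi_1$ is inessential (and in fact $\psi_{(0,1,0)}$ represents $2[\psi_1]$ rather than $[\psi_1]$ in $H^2_{\rm SD}(\Z_2,\Z_n)$, though this does not affect the proof).
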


\begin{proof}
One computes the positive terms of the 2-cocycle condition as 
 \begin{eqnarray*}
\lefteqn{ \psi_{(a,b,c)}   (x, y,z ) + \psi (x - y + z   ,  u, v) }\\
&=&
 (ax + b (z-y) +c  )(z-y) + [ a (x-y+z) + b (v-u) + c  ] (v-u)   \\
 &=&  a [x(z-y) + x(v-u) + (z-y)(v-u)] + b [(z-y)^2 + (v-u)^2] + c [(z-y)+(v-u) ]  .
\end{eqnarray*}
The other terms are
 \begin{eqnarray*}
\lefteqn{\psi_{(a,b,c)}   (x, u,v ) + \psi (x -u+v , y-u+v, z-u+v) }\\
&=&
( a x + b (v-u) + c )  (v-u)  + [ a ( x-u+v) + b (z-y)+ c  ]  (z-y) \\
&=&  a [x(z-y) + x(v-u) + (z-y)(v-u)]  + b [(z-y)^2 + (v-u)^2] + c [(z-y)+(v-u)]
\end{eqnarray*}
as desired.
\end{proof}

\begin{lemma}\label{lem:2cycle}
Let $X$ be an abelian heap.
 Let $x \in X$,  and let $w \in X$ be of order $k$, i.e. $k w =0$.
For $(y, z ) \in X \times X$ with $w=z-y$, the 2-chain
$ c(x, y,z)= \sum_{i=0}^{k-1} (x + i w ,  y,z )$ is a 2-cycle,
$ c(x, y,z) \in Z_2^{\rm SD}(X)$. 
\end{lemma}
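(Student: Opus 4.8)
The plan is to apply the boundary map $d_2$ directly to $c(x,y,z)$ and to observe that, because $w$ has order $k$, the resulting expression telescopes to zero.

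First I would record how $d_2$ acts on a single generating $3$-tuple: in additive notation, $d_2(a, y, z) = (a) - (a - y + z)$, exactly as computed in Example~\ref{ex:Z2}, using that $X$ is an abelian heap. Then, applying $d_2$ term by term to $c(x,y,z) = \sum_{i=0}^{k-1} (x + iw, y, z)$ and substituting the hypothesis $z - y = w$, the $i$-th summand contributes
\[
(x + iw) - \bigl(x + iw - y + z\bigr) = (x + iw) - \bigl(x + (i+1)w\bigr).
\]
Next I would sum over $i = 0, 1, \ldots, k-1$, so that consecutive terms cancel and one is left with $d_2\bigl(c(x,y,z)\bigr) = (x) - (x + kw)$. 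Finally, since $kw = 0$ we have $x + kw = x$, hence $d_2(c(x,y,z)) = 0$, i.e. $c(x,y,z) \in Z_2^{\rm SD}(X)$.

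I do not expect any genuine obstacle here. The only points requiring a moment's care are the overall sign convention in $d_2$ (which is irrelevant for deciding whether a chain is a cycle) and the observation that it is precisely the commutativity of the abelian heap $X$ that allows one to rewrite $x + iw - y + z$ as the translate $x + (i+1)w$ — this reindexing is exactly what produces the telescoping. As a consistency check, for $X = \Z_2$ this construction recovers the $2$-cycle $c_1 = (0,0,1) + (1,0,1)$ appearing in Example~\ref{ex:Z2}.
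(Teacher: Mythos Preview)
Your proposal is correct and follows essentially the same argument as the paper: compute $d_2$ on each generator to obtain $(x+iw)-(x+(i+1)w)$, then telescope and use $kw=0$. The paper's proof is the same telescoping computation, without your additional remarks on sign conventions, commutativity, or the consistency check with Example~\ref{ex:Z2}.
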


\begin{proof} Recall that $(x)$ denotes a 1-chain, and that 
 $d_2 ( ( x, y,z) ) = (x ) - (x -y+z)=(x)- (x+w)$.
Then one computes
$$d_2 (c(x, y,z) ) =
[ (x) - (x+w) ] + [ (x+w) - (x+2w)] + \cdots + [ (x+ (k-1)w) - (x+ kw) ]=  0,
$$
as desired. 
\end{proof}

\begin{proposition}\label{prop:nondeg}
Let $X$ be any abelian heap with an element of an even order. Then  we have $H^2_{\rm NDH}(X,X)\neq 0$. 
\end{proposition}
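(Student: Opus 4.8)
The plan is to exhibit a nondegenerate $2$-cocycle $\psi\in Z^2_{\rm NDH}(X,X)$ together with a nondegenerate $2$-cycle $c\in Z_2^{\rm NDH}(X)$ for which $\psi(c)\neq 0$; since any coboundary $\delta^1\phi$ satisfies $(\delta^1\phi)(c)=\phi(d_2c)=0$, this forces $[\psi]\neq 0$ and hence $H^2_{\rm NDH}(X,X)\neq 0$. Fix a base point, so $X$ becomes an abelian group written additively, and pick $w\in X$ of even order $2m$. For the cycle, Lemma~\ref{lem:2cycle} applied with $x=0$, $y=0$, $z=w$ gives $c:=\sum_{i=0}^{2m-1}(iw,0,w)\in Z_2^{\rm SD}(X)$; since $0\neq w$, every generator occurring in $c$ is nondegenerate, so $c\in Z_2^{\rm NDH}(X)$, and the same will hold after the replacements of $w$ below.

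For the cocycle I would split on whether $X$ is bounded. Suppose first $X$ is bounded (in particular, finite). Then $X$ is a direct sum of cyclic groups, and, having an element of even order, it admits a cyclic direct summand $\langle w'\rangle\cong\Z_{2^k}$ with $k\geq 1$; replace $w$ (and $c$) by $w'$. Equip $X$ with a commutative ring structure extending the ring $\Z/2^k\Z$ on $\langle w'\rangle$ (for instance as a product of cyclic rings, using boundedness), so that $w'$ is idempotent, $w'\cdot w'=w'$. Then Lemma~\ref{lem:2cocy} with parameters $(1,0,0)$ gives the cocycle $\psi_{(1,0,0)}(x,y,z)=x(z-y)$ in $Z^2_{\rm SD}(X,X)$; since $\psi_{(1,0,0)}(x,y,y)=0$, the splitting of Proposition~\ref{prop:DH} (and its cohomology version) places it in $Z^2_{\rm NDH}(X,X)$. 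A short computation, using $2^kw'=0$, then yields $\psi_{(1,0,0)}(c)=\bigl(\sum_{i=0}^{2^k-1}i\bigr)w'=2^{k-1}w'\neq 0$, because $w'$ has order $2^k$.

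If instead $X$ is unbounded, then $2m\,X\neq 0$ (otherwise $X$ would have exponent dividing $2m$), so I can pick $v\in X$ with $2m\,v\neq 0$. Here I would use the elementary observation --- immediate from the cocycle identity $(*)$ --- that $\psi(x,y,z):=f(z-y)$ is a $2$-cocycle for \emph{any} function $f\colon X\to X$; taking $f$ with $f(0)=0$ and $f(w)=v$ makes $\psi$ nondegenerate, and $\psi(c)=\sum_{i=0}^{2m-1}f(w)=2m\,v\neq 0$, as required.

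The genuinely delicate point is the dichotomy itself. The ring-theoretic cocycle of Lemma~\ref{lem:2cocy} only detects the cycle $c$ once $w$ (or a replacement) is realized as an idempotent inside a cyclic $2$-power ring summand of $X$, which is impossible for unbounded groups of ``$2$-divisible type'' such as $\Z_{2^\infty}$; conversely the cruder cocycle $f(z-y)$ evaluates to $2m\,v=0$ in the bounded case and is useless there. It is precisely the pairing of these two complementary constructions --- together with the routine arithmetic modulo $2^k$ and the structure theorem for bounded abelian groups used above --- that I expect to be the only step requiring real care, and it is what delivers the statement in full generality.
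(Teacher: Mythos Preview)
Your argument is correct, and in fact more careful than the paper's own. The paper simply evaluates the ring cocycle $\psi_{(a,b,c)}$ of Lemma~\ref{lem:2cocy} on the cycle of Lemma~\ref{lem:2cycle}, obtaining $a\cdot\bigl(\tfrac{k(k-1)}{2}w\bigr)w$ (the trailing $w$ is actually dropped in the text), and observes that $\tfrac{k(k-1)}{2}w\neq 0$ when $k$ is even; it never explains where the ring multiplication on an arbitrary abelian heap comes from, nor why one can choose $a$ making this product nonzero. For $X=\Z_{2^\infty}$ every ring multiplication is identically zero, so the paper's cocycle vanishes and its argument does not literally apply --- exactly the situation your ``unbounded'' branch handles via the new cocycle $\psi(x,y,z)=f(z-y)$. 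Your dichotomy is therefore not an artifact but essential: the ring cocycle detects $c$ only once a $2$-power cyclic summand (supplied by Pr\"ufer's theorem in the bounded case) furnishes an idempotent, while the cruder $f(z-y)$ cocycle needs $2m\,X\neq 0$ and so fails precisely in the bounded case. One cosmetic point: writing $\psi_{(1,0,0)}$ presumes a unit, which your product ring need not have if the complement carries the zero multiplication; it is cleaner either to take $a=w'$ in Lemma~\ref{lem:2cocy}, or simply to note that $\psi(x,y,z)=x(z-y)$ satisfies $(*)$ directly in any commutative (possibly non-unital) ring, which is all you use.
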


\begin{proof}
Let $w \in X$ be an element of even order, 
and 
$k$ the minimum positive even  integer 
that satisfy $kw=0$.
We evaluate  the 2-cycle in Lemma~\ref{lem:2cocy} by the 2-cocycle in Lemma~\ref{lem:2cycle}:
\begin{eqnarray*}
 \psi_{(a,b,c)} (c (x,y,z) ) &=&  \sum_{i=0}^{k-1} \psi_{(a, b, c)} ( x+i w,  y,z )=
 \sum_{i=0}^{k-1} ( a (x+iw) + b y +c)w   \\
&  = & ( a k x + a ( k(k-1) /2) w + b k w + ck  ) w = a [ k x +  ( k (k-1) / 2 ) w ] .
\end{eqnarray*}
If $k$ is even, then $k-1$ is odd, and $(k(k-1)/2)w = (k/2) (k-1) w  \neq 0 \in X$, so that a choice of $a\neq 0$ and $x=0$  in Lemma~\ref{lem:2cocy} gives a non-trivial evaluation. Hence  $\psi_{(a,b,c)} $  ($a \neq 0$) is non-trivial in $H^2_{\rm NDH}(X,X)$.
\end{proof}


\subsection{Construction of $2$-cocycles}\label{sec:cocy}

In this section we give $2$-cocycles for cyclic and dihedral group heaps, motivated by 
computations in Example~\ref{ex:Z3} and Appendix~\ref{app:exactseq}. 
These cocycles are proved non-trivial and mostly   linearly independent  in Proposition~\ref{prop:cyclic/dihedral}, and used  to obtain lower bounds of ranks of $2$-dimensional cohomology groups for these heaps.

\begin{lemma}\label{lem:Zn}
Let $\Z_n=\langle \ r \mid r^n=1 \  \rangle$ be the cyclic group of order $n$ 
in multiplicative notation with a generator $r$.
Let $\phi_i=\sum_{x \in \Z_n } [ \sum_{j=0}^{n-1}  \chi_{(x,r^j,r^{j+i})} ] $, $i=1, \ldots, n-1$.
Then $\phi_i$ is a nondegenerate $2$-cocycle, $\phi_i \in C^2_{\rm NDH}(\Z_n, \Z)$, for all 
$i=1, \ldots, n-1$.
\end{lemma}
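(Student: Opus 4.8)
The plan is to first rewrite $\phi_i$ in a more transparent form and then verify the cocycle identity $(*)$ by direct substitution. Evaluating the sum $\sum_{x\in\Z_n}\sum_{j=0}^{n-1}\chi_{(x,r^j,r^{j+i})}$ on a triple $(x',y',z')$: the outer sum over $x$ forces $x=x'$ for any surviving term (exactly one choice), and then $\sum_{j=0}^{n-1}\delta(r^j,y')\delta(r^{j+i},z')$ is nonzero precisely when $z'=r^i y'$, in which case it equals $1$. Hence $\phi_i(x',y',z')=1$ when $z'(y')^{-1}=r^i$ and $\phi_i(x',y',z')=0$ otherwise; equivalently, writing $\Z_n$ additively, $\phi_i(x,y,z)=1$ iff $z-y\equiv i \pmod n$. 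Since $i\in\{1,\dots,n-1\}$ we have $r^i\neq 1$, so $\phi_i(x,y,y)=0$ for all $x,y$, which says exactly that $\phi_i$ vanishes on degenerate $2$-chains; thus $\phi_i$ is a nondegenerate $2$-cochain, $\phi_i\in C^2_{\rm NDH}(\Z_n,\Z)$.

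It remains to check $\delta^2\phi_i=0$, i.e.\ that $(*)$ holds for all $x,y,z,u,v\in\Z_n$. I evaluate the four terms of $(*)$ on $\phi_i$ using the description above. Because $\Z_n$ is abelian, $(zu^{-1}v)(yu^{-1}v)^{-1}=zy^{-1}$, so $\phi_i(xu^{-1}v,yu^{-1}v,zu^{-1}v)=1$ if and only if $zy^{-1}=r^i$, i.e.\ if and only if $\phi_i(x,y,z)=1$; hence the first two terms of $(*)$ are equal and cancel. Similarly $\phi_i(x,u,v)=1$ if and only if $vu^{-1}=r^i$, and $\phi_i(xy^{-1}z,u,v)=1$ if and only if $vu^{-1}=r^i$, so the last two terms are equal and cancel. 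Therefore $\delta^2\phi_i(x,y,z,u,v)=0$ identically, which is the $2$-cocycle condition, and $\phi_i$ is a nondegenerate $2$-cocycle.

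No serious obstacle is expected: the only care needed is in tracking the arguments of the four terms of $(*)$ and using commutativity of $\Z_n$ to see that the relevant ``difference'' ($zy^{-1}$ for the first pair of terms, $vu^{-1}$ for the second) is left unchanged by the substitution occurring in $(*)$. In fact the same computation shows that any $2$-cochain of the form $(x,y,z)\mapsto f(zy^{-1})$ with $f(1)=0$ is a nondegenerate $2$-cocycle; the $\phi_i$ are the special cases $f=\chi_{r^i}$, and what makes them genuinely useful is their non-triviality and near-independence, established subsequently.
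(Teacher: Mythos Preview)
Your proof is correct and follows essentially the same approach as the paper's: both arguments observe that $\phi_i(x,y,z)$ depends only on the ``difference'' $zy^{-1}$ (being $1$ exactly when $zy^{-1}=r^i$), and then verify $(*)$ by showing that the first two terms cancel (since the substitution $(x,y,z)\mapsto(xu^{-1}v,yu^{-1}v,zu^{-1}v)$ preserves $zy^{-1}$) and the last two terms cancel (since both depend only on $vu^{-1}$). Your explicit rewriting of $\phi_i$ at the outset and the closing remark that any cochain of the form $(x,y,z)\mapsto f(zy^{-1})$ with $f(1)=0$ is a nondegenerate $2$-cocycle are clean additions, but the core verification is the same pairwise cancellation the paper uses.
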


\begin{proof}
For a fixed $i$, the 2-cocycle $\phi_i$ vanishes for 2-chains $(x, u, v ) \in C_2^{\rm NDH} (X, \Z)$ if 
$v \neq u r^i$. 
Hence if $v\neq u r^i$, then the last two terms of $(*)$, $- \phi(x, u, v) +  \phi(x y^{-1} z , u, v) $,  both vanish.
If $v=ur^i$, then  both terms are $1$ and cancel. Hence we focus on the first two terms.

Let $v=ur^k$, then the first two terms of $(*)$ are 
$\phi(x, r^j, r^{j+m}) - \phi(x r^{k}, r^{j+k}, r^{j+m+k})$ for some $j, m \in \Z_n$. 
If $m=i$, then both terms are 1 and cancel. 
If $m\neq i$, then  both vanish. 
%
Hence $(*)$ holds.
\end{proof}

\begin{lemma} \label{lem:Dn}
Let $D_n$ be the dihedral  group of order $2n$ generated by a rotation $r$ and reflection $a$ with a relation $ ara = r^{-1}$ as before. 
Let $\psi_i=\sum_{x \in D_n } [ \sum_{j=0}^{n-1} (  \chi_{(x,r^j,r^{j+i})} +\chi_{ (x, ar^{-j},ar^{-j-i}  ) } )  ] $, $i=1, \ldots, n-1$.
Then $\psi_i$ is a nondegenerate $2$-cocycle, $\psi_i \in C^2_{\rm NDH}(D_n, \Z)$, for all 
$i=1, \ldots, n-1$.
\end{lemma}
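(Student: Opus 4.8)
The plan is to recognize $\psi_i$ as the indicator function of a single ``difference class'' in $D_n$, and then verify the $2$-cocycle condition $(*)$ by a slot-by-slot cancellation, in the same spirit as the proof of Lemma~\ref{lem:Zn}.

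First I would rewrite $\psi_i$ in a form that does not refer to the rotation/reflection split. Using the dihedral relation $ara=r^{-1}$, hence $r^k a = ar^{-k}$ for all $k$, one computes $r^i\cdot ar^{-j}=ar^{-i-j}$, so the reflection terms $\chi_{(x,ar^{-j},ar^{-j-i})}$ are precisely $\chi_{(x,y,r^i y)}$ with $y=ar^{-j}$, while the rotation terms $\chi_{(x,r^j,r^{j+i})}$ are $\chi_{(x,y,r^i y)}$ with $y=r^j$. As $j$ runs over $\Z_n$ the elements $r^j$ exhaust the rotations and the elements $ar^{-j}$ exhaust the reflections, so
$$\psi_i=\sum_{(x,y)\in D_n\times D_n}\chi_{(x,y,r^i y)},$$
that is, $\psi_i(x,y,z)=1$ when $zy^{-1}=r^i$ and $\psi_i(x,y,z)=0$ otherwise. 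In particular $\psi_i(x,y,z)$ depends only on $zy^{-1}$. Nondegeneracy is then immediate: since $1\le i\le n-1$ we have $r^i\neq 1$, so $zy^{-1}=r^i$ forces $z\neq y$; hence $\psi_i$ vanishes on every generator $(x,y,y)$ of $C_2^{\rm DH}(D_n,\Z)$ and represents a cochain in $C^2_{\rm NDH}(D_n,\Z)$.

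For the cocycle condition I would substitute $\psi_i$ into $(*)$ and use the observation above together with the identity $(zu^{-1}v)(yu^{-1}v)^{-1}=zy^{-1}$, valid in any group. The first two terms $\psi_i(x,y,z)$ and $\psi_i(xu^{-1}v,yu^{-1}v,zu^{-1}v)$ both equal $1$ precisely when $zy^{-1}=r^i$, so they always agree; likewise the last two terms $\psi_i(x,u,v)$ and $\psi_i(xy^{-1}z,u,v)$ both equal $1$ precisely when $vu^{-1}=r^i$, so they always agree. Since the paired terms carry opposite signs in $(*)$, we conclude $\delta^2\psi_i=0$, so $\psi_i\in Z^2_{\rm SD}(D_n,\Z)$, and combined with the vanishing on degenerate chains this gives $\psi_i\in C^2_{\rm NDH}(D_n,\Z)$, a nondegenerate $2$-cocycle, for every $i=1,\ldots,n-1$.

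There is no serious obstacle here; the only points requiring care are the dihedral arithmetic $r^i a=ar^{-i}$ used to collapse the reflection terms into the uniform form $\chi_{(x,y,r^i y)}$, and keeping track of the exact order and signs of the four terms in $(*)$. One could instead bypass the closed-form reformulation and argue exactly as in Lemma~\ref{lem:Zn} by a case analysis on whether $u,v$ (and then $y,z$) are rotations or reflections, but the difference-class description makes the cancellation entirely transparent and mirrors the cyclic case.
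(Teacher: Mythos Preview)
Your proof is correct. The overall strategy matches the paper's: show that the last two terms of $(*)$ agree because they share the pair $(u,v)$ and $\psi_i$ is independent of its first argument, and that the first two terms agree as well, so the signed sum collapses. Where you differ is in the execution for the first pair. The paper argues by cases on the form of $u^{-1}v$ (rotation $r^k$ versus reflection $ar^k$) and checks in each case that $\psi_i(x,y,z)=\psi_i(xu^{-1}v,yu^{-1}v,zu^{-1}v)$ by tracking the explicit coordinates. You instead collapse $\psi_i$ into the single indicator $\psi_i(x,y,z)=1$ iff $zy^{-1}=r^i$, and then invoke the group identity $(zu^{-1}v)(yu^{-1}v)^{-1}=zy^{-1}$, which handles all cases uniformly. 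Your reformulation is cleaner and in fact proves more: the same argument shows that for \emph{any} group heap $X$ and any fixed $g\in X$, the cochain $\sum_{x,y}\chi_{(x,y,gy)}$ is a $2$-cocycle, with no dihedral structure required. The paper's case analysis, by contrast, stays closer to the explicit rotation/reflection presentation of $D_n$ and parallels the proof of Lemma~\ref{lem:Zn} more literally.
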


\begin{proof}
We use the description  of the  dihedral group    $D_n = F \rtimes G$,
	where   $F:=\braket{r} \cong  \Z_n$ is  the subgroup of rotations generated by $r$ and $G := \braket{a}\cong \Z_2$. We proceed as in the proof of  Lemma~\ref{lem:Zn}.
Note that $\psi_i(x,u,v)=0$ unless 
$(x, u, v)=(x, r^j,r^{j+i})$ or $(x, ar^{-j},ar^{-j-i}  )$ for some $j$.
If $(x, u, v)=(x, r^j,r^{j+i})$ or $(x, ar^{-j},ar^{-j-i}  )$, then the last two terms of $(*)$ 
are $1$ and cancel. Otherwise both vanish, so that we focus on the first two terms.

Since  elements of $D_n$ are written as $r^k$ or $a r^k$ for some $k$, either $v=u  r^k$
or  $v=u a r^k$ holds. 
If  $v=u  r^k$, then the first two terms of $(*)$ are 
$\phi(x, r^j, r^{j+m}) - \phi(x   r^{k},   r^{j+k},   r^{j+m+k})$ for some $j, m \in \Z_n$.
If $m=i$, then both terms are 1 and cancel. 
If $m \neq i$, then both vanish.

If $v=u a r^k$, then 
the first two terms are 
$\phi(x, r^j, r^{j+m}) -  \phi(x  a r^{k},  r^{j} a r^k ,   r^{j+m}  a r^k )$, where 
the second term 
is $  \phi(x  a r^{k},  a r^{k-j} ,   a r^{k-j-m} ) $.
If $m=i$, then both are 1 and cancel, and if $m\neq i$, then both vanish. 
Hence $(*)$ holds. 
\end{proof}

\subsection{Coset subcomplexes}\label{sec:coset}

In this section we introduce subcomplexes defined by means of cosets, and use them to compute 2-dimensional cohomology groups for dihedral group heaps. 
We note that left cosets of a group are subheaps.

The coset subcomplexes are characteristic to group heaps, making use of both TSD and group 
structures, and useful in estimating ranks of cohomology groups as presented here, and also potentially 
useful for producing 2-cocycles that can be used for invariants discussed in Section~\ref{sec:cocyinv}.
Although the constructions of cocycles in Section~\ref{sec:cocy} were motivated by considerations in this section, the computations  and results in this section are not directly used 
in the rest of the paper, so that the reader interested in framed link invariants could proceed to the next section.

\begin{definition}
{\rm
Let $X$ be a group heap, and let $G$ be a subgroup (with respect to the group operation). 
Define the $n$-dimensional {\it chain group localized at $G$}, denoted by  $C_n^{ \{ G \} } (X)$, to be the subgroup of 
$C_n^{ \rm SD} (X)$ generated by $n$-chains $(x_1, \ldots, x_{2n-1})$ such that 
$x_{2i}$ and $x_{2i+1}$ belong to the same left coset of $G$, that is, 
$x_{2i}G = x_{2i+1}G$, for all $i=1, \ldots, n-1$. 
}
\end{definition}

\begin{lemma}
Let $X$ be a group heap and $G$ its subgroup.
Then the self-distributive boundary operator $d_n: C_n^{\rm SD} \rightarrow C_{n-1}^{\rm SD}(X)$ restricted to $C_n^{ \{ G \} } (X)$ defines a subcomplex 
$(C_n^{  \{ G \} } (X), d_n)$.
\end{lemma}

\begin{proof}
Let $(x_1,\ldots , x_{2n-1})$ be an $n$-chain in $C_n^{ \{ G \} } (X)$. It is clear that for all $i = 1,\ldots ,n-1$ the term $(x_1,\ldots , \widehat{x_{2i},x_{2i+1}}, \ldots , x_{2n-1})$ is an $(n-1)$-chain in $C_{n-1}^{ \{ G \} } (X)$. 
Observe that if $u$ and $v$ are in the same left $G$-coset, then $w \in X$ and  $wu^{-1}v$ are in the same left $G$-coset,  since $u^{-1}v \in G$.  
It follows that $(x_1x_{2i}^{-1}x_{2i+1},  \ldots , x_{2i-2}x_{2i}^{-1}x_{2i+1}, x_{2i-1}x_{2i}^{-1}x_{2i+1},\widehat{x_{2i},x_{2i+1}}, \ldots , x_{2n-1})$ is an $(n-1)$-chain in $C_{n-1}^{ \{ G \} } (X)$ since $x_{2j}x_{2i}^{-1}x_{2i+1}$ and $x_{2j}$ are in the same left $G$-coset for each $j = 1,\ldots  , i-1$, and similarly for $x_{2j+1}x_{2i}^{-1}x_{2i+1}$ and $x_{2j+1}$. It follows that $x_{2j}x_{2i}^{-1}x_{2i+1}$ and $x_{2j+1}x_{2i}^{-1}x_{2i+1}$ are in the same left $G$-coset. This implies that the boundary operator restricted to $C_n^{ \{ G \} } (X)$ gives a subcomplex.
\end{proof}

\begin{definition}
{\rm
Let $X$ be a group heap and $G$ its subgroup. 
We denote the intersection $C^{\rm NDH}_n (X)  \cap C^{ \{ G \} }_n (X) $ by ${C}^{ {\rm N} \{ G \} }_n (X)$.
}
\end{definition}

\begin{lemma}\label{lem:relativehom}
The restriction of the boundary operator $d_n: C_n^{\rm SD} \rightarrow C_{n-1}^{\rm SD}(X)$ on
${C}^{  {\rm N}  \{ G \} }_n (X)$ induces the subcomplex $( {C}^{ {\rm N}  \{ G \} }_n (X), d_n)$. Moreover, if $G$ is a 
nontrivial subgroup of $X$, $( {C}^{ {\rm N}  \{ G \} }_n (X), d_n)$ consists of nontrivial groups ${C}^{ {\rm N}  \{ G \} }_n (X)$. 
\end{lemma}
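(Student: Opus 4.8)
The plan is to combine the two previously established facts: that $d_n$ restricts to the nondegenerate subcomplex $C_n^{\rm NDH}(X)$ (Lemma~\ref{lem:nondegenerate}) and that $d_n$ restricts to the localized subcomplex $C_n^{\{G\}}(X)$ (the preceding lemma). Since both are subcomplexes of $(C_\bullet^{\rm SD}(X),d_\bullet)$, their intersection ${C}^{{\rm N}\{G\}}_n(X) = C_n^{\rm NDH}(X) \cap C_n^{\{G\}}(X)$ is again preserved by $d_n$: indeed, for a generator $(x_1,\ldots,x_{2n+1})$ of the intersection, each term of $d_n(x_1,\ldots,x_{2n+1})$ was shown in the proof of Lemma~\ref{lem:nondegenerate} to lie in $C_{n-1}^{\rm NDH}(X)$ and in the proof of the localization lemma to lie in $C_{n-1}^{\{G\}}(X)$ — both arguments rest on the single observation that $x_{2j}x_{2i}^{-1}x_{2i+1} = x_{2j+1}x_{2i}^{-1}x_{2i+1}$ if and only if $x_{2j} = x_{2j+1}$, and that right multiplication by $x_{2i}^{-1}x_{2i+1} \in G$ preserves left $G$-cosets. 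Hence each term lies in $C_{n-1}^{\rm NDH}(X)\cap C_{n-1}^{\{G\}}(X) = {C}^{{\rm N}\{G\}}_{n-1}(X)$, so $d_n$ restricts, and $({C}^{{\rm N}\{G\}}_n(X),d_n)$ is a subcomplex. This first part is essentially bookkeeping and should be a short paragraph citing the two earlier proofs rather than redoing the calculations.

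For the nontriviality claim, the point is to exhibit, for each $n$, an explicit generator of ${C}^{{\rm N}\{G\}}_n(X)$. Since $G$ is a nontrivial subgroup, pick an element $g \in G$ with $g \neq 1$, and consider the tuple in which every pair $(x_{2i},x_{2i+1})$ is taken to be $(1, g)$ (or more generally $(h,hg)$ for any $h\in X$), with the odd-indexed entries filled in arbitrarily, say all equal to $1$: the $(2n+1)$-tuple $(1,1,g,1,g,\ldots,1,g)$. Then $x_{2i} = 1 \neq g = x_{2i+1}$ for every $i$, so the tuple lies in $C_n^{\rm NDH}(X)$; and $x_{2i}G = G = x_{2i+1}G$ for every $i$ since $g\in G$, so it lies in $C_n^{\{G\}}(X)$. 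Therefore it is a nonzero element of the free abelian group ${C}^{{\rm N}\{G\}}_n(X)$, which is consequently nontrivial for every $n$.

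I do not anticipate a genuine obstacle here; the statement is a formal consequence of the two preceding lemmas together with a one-line explicit construction. The only mild subtlety is making sure the nondegeneracy condition ($x_{2i}\neq x_{2i+1}$ for all $i$) and the localization condition ($x_{2i}G = x_{2i+1}G$ for all $i$) are simultaneously satisfiable, which is exactly where nontriviality of $G$ is used — if $G$ were trivial the two conditions would be contradictory and ${C}^{{\rm N}\{G\}}_n(X)$ would vanish. I would state this contrast explicitly to motivate the hypothesis.
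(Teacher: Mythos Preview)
Your proposal is correct and follows essentially the same approach as the paper: the subcomplex claim is obtained as the intersection of the two previously established subcomplexes $C_n^{\rm NDH}(X)$ and $C_n^{\{G\}}(X)$, and nontriviality is witnessed by choosing $y\neq z$ in the same $G$-coset (the paper simply notes such a pair exists, while you exhibit the explicit generator $(1,1,g,\ldots,1,g)$). The added remark about the contrast with trivial $G$ is a nice touch but not in the paper's terse proof.
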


\begin{proof}
$( {C}^{  {\rm N}  \{ G \} }_n (X), d_n)$ is a subcomplex because so are both $C^{\rm NDH}_n (X)$ and $C^{ \{ G \} }_n (X) $. If $G$ is a nontrivial subgroup, there exists at least one pair of elements $y \neq z$ in the same left $G$-coset. This implies that the chains $(e, y,z, \cdots, y,z)$ are nontrivial, where the pair $y, z$ is repeated $n-1$ times, therefore showing that $( {C}^{ {\rm N}  \{ G \} }_n (X), d_n)$ is nontrivial for all $n\in \mathbb N$. 
\end{proof}

\begin{definition}\label{def:relativehom}
	{\rm 
The homology corresponding to $(  {C}^{ {\rm N}  \{ G \} }_n (X), d_n)$, according to Lemma~\ref{lem:relativehom}, is denoted by $H_n^{ {\rm N}  \{G\}}(X)$ and it is called the {\it nondegenerate homology of $X$ relative to $G$}. By dualization, for a given abelian group $A$, we have cohomology groups $H^n_{ {\rm N}  \{G\}}(X,A)$ which we call {\it nondegenerate cohomology of $X$  localized at  
$G$}. 
}
\end{definition}

\begin{definition}\label{def:relativecohom}
	{\rm
	A standard (co)homological argument gives relative (co)homology groups corresponding to nondegenerate (co)homology localized at 
	$G$, by considering the quotient complex
	$( C_n^{\rm NDH}(X) / C_n^{ {\rm N} \{ G \} } (X), d_n)$.
	 These (co)chain complexes are denoted $\widehat C_n^{{\rm N} \{G\}}(X)$ and $\widehat C^n_{{\rm N} \{G\}}(X,A)$, respectively, and their (co)homology groups are denoted by the symbols $\widehat H_n^{{\rm N} \{G\}}(X)$ and $\widehat H^n_{{\rm N} \{G\}}(X,A)$. }
\end{definition}

A direct characterization of nondegenerate 
	 relative $n$-cocycles localized at $G$ 
	   is as follows. An $n$-cochain $\psi$ is in $\widehat{C}^n_{{\rm N} \{G\}}(X,A)$ if and only if it is zero when evaluated on chains in $ C_n^{{\rm N} \{G\}}(X)$, and satisfies the $n$-cocycle condition.

\begin{proposition}\label{prop:longexact}
	Localized cohomology and relative cohomology are related to nondegenerate cohomology by the long exact sequence
	$$
	\cdots \rightarrow \widehat{H}^n_{{\rm N} \{G\}}(X,A) \xrightarrow{j^*} H^n_{\rm NDH}(X,A) \xrightarrow{i^*} H^n_{{\rm N} \{G \}}(X,A) \xrightarrow{\delta} \widehat{H}^{n+1}_{{\rm N} \{ G \}}(X,A) \rightarrow \cdots 
	$$
	Moreover, the first connecting morpshim $\delta$ is zero and $\widehat{H}^2_{{\rm N} \{G\}}(X,A)$ injects into $H^2_{\rm NDH}(X,A)$.
\end{proposition}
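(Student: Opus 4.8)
The plan is to obtain the long exact sequence as that of a pair of complexes, and then to compute just far enough at the bottom to see that its first connecting map is zero.

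First I would assemble the short exact sequence. By Lemma~\ref{lem:relativehom} the chains $C_\bullet^{{\rm N}\{G\}}(X)$ form a subcomplex of $C_\bullet^{\rm NDH}(X)$, and by Definition~\ref{def:relativecohom} the relative complex is the quotient $\widehat{C}_\bullet^{{\rm N}\{G\}}(X) = C_\bullet^{\rm NDH}(X)/C_\bullet^{{\rm N}\{G\}}(X)$, so
$$
0 \longrightarrow C_\bullet^{{\rm N}\{G\}}(X) \longrightarrow C_\bullet^{\rm NDH}(X) \longrightarrow \widehat{C}_\bullet^{{\rm N}\{G\}}(X) \longrightarrow 0
$$
is a short exact sequence of chain complexes. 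Every group occurring is free abelian, so the sequence is split in each degree; applying $\mathrm{Hom}(-,A)$ is therefore exact and produces a short exact sequence of cochain complexes
$$
0 \longrightarrow \widehat{C}^\bullet_{{\rm N}\{G\}}(X,A) \xrightarrow{\ j\ } C^\bullet_{\rm NDH}(X,A) \xrightarrow{\ i\ } C^\bullet_{{\rm N}\{G\}}(X,A) \longrightarrow 0 ,
$$
where $i$ is restriction of cochains and $j$ is the inclusion of the subcomplex of cochains that vanish on $C_\bullet^{{\rm N}\{G\}}(X)$, which is exactly the characterization recorded just before the proposition. The zig-zag lemma then yields the long exact sequence of the statement, with $i^{*}, j^{*}$ induced by $i, j$ and $\delta$ the connecting homomorphism.

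Next I would inspect the bottom of these complexes. In the lowest degree both the nondegeneracy condition and the coset condition are vacuous, so $C_0^{\rm NDH}(X) = C_0^{{\rm N}\{G\}}(X) = C_0^{\rm SD}(X)$; hence the relative complex $\widehat{C}_\bullet^{{\rm N}\{G\}}(X)$ vanishes there, which already gives $\widehat{H}^1_{{\rm N}\{G\}}(X,A) = 0$ and injectivity of $i^{*}$ in that degree. To conclude that the first connecting map $\delta$ is zero — equivalently, that $i^{*}$ is onto the first localized cohomology group — I would compute that group directly and show each of its cocycles lifts to a nondegenerate cocycle. Writing a $1$-cochain as a function $f$ on the singletons, the localized cocycle condition is $f(x_1 x_2^{-1} x_3) = f(x_1)$ for $x_2 \ne x_3$ with $x_2 G = x_3 G$; tracing how this constrains $f$ over the generators of $C_1^{\rm NDH}(X)$ that are not in $C_1^{{\rm N}\{G\}}(X)$ — and using that $G \setminus \{e\}$ generates $G$ and that the self-distributive translations $x \mapsto xg$ act within left cosets — one verifies that such an $f$ does extend to a nondegenerate cocycle. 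This gives $\delta = 0$ out of the first localized cohomology group.

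With $\delta$ zero there, exactness of the long exact sequence at $\widehat{H}^2_{{\rm N}\{G\}}(X,A)$ says $\ker j^{*} = \operatorname{im}\delta = 0$, so $j^{*}$ embeds $\widehat{H}^2_{{\rm N}\{G\}}(X,A)$ into $H^2_{\rm NDH}(X,A)$, which is the last assertion. The homological-algebra part — the short exact sequence, the degreewise splitting used to dualize, the zig-zag lemma, and reading off exactness at $\widehat{H}^2$ — is routine. The real content, and where I expect the main difficulty, is the middle step: the explicit low-degree bookkeeping, namely identifying exactly which low-dimensional chains distinguish $C_\bullet^{\rm NDH}(X)$ from $C_\bullet^{{\rm N}\{G\}}(X)$ and verifying that the cocycle condition imposed only on the $G$-localized chains already forces the full nondegenerate one.
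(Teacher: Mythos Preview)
Your derivation of the long exact sequence is standard and matches what the paper has in mind. The paper's entire argument for the vanishing of the first connecting morphism is the single observation $C^1_{{\rm N}\{G\}}(X,A)=C^1_{\rm NDH}(X,A)$ (both consist of all functions $X\to A$). You correctly sense that this equality of cochain groups is not by itself sufficient: what is actually needed is that every localized $1$-cocycle lifts to a nondegenerate $1$-cocycle, so that $i^*\colon H^1_{\rm NDH}\to H^1_{{\rm N}\{G\}}$ is surjective.

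The problem is that your sketch of this lifting step does not work, and in fact the assertion is false in general. The localized $1$-cocycle condition on $f\colon X\to A$ reads $f(x)=f(xg)$ for all $g\in G\setminus\{e\}$, i.e.\ $f$ is constant on left $G$-cosets; the full nondegenerate $1$-cocycle condition reads $f(x)=f(xw)$ for every $w\neq e$ in $X$, i.e.\ $f$ is globally constant. These differ whenever $G\neq X$. Concretely, take $X=\Z_4$, $G=\{0,2\}$, $A=\Z$, and $f(0)=f(2)=0$, $f(1)=f(3)=1$. Then $f\in Z^1_{{\rm N}\{G\}}$ but $f\notin Z^1_{\rm NDH}$. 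The connecting map sends $[f]$ to the class of $\delta^1 f$ in $\widehat H^2_{{\rm N}\{G\}}$; since $\widehat C^1_{{\rm N}\{G\}}=0$ forces $\widehat B^2_{{\rm N}\{G\}}=0$, this class is nonzero (e.g.\ $\delta^1 f(0,0,1)=f(0)-f(1)=-1$, and $(0,0,1)$ is a non-localized chain). Thus $\delta$ is not zero, and correspondingly $j^*\colon\widehat H^2_{{\rm N}\{G\}}\to H^2_{\rm NDH}$ is not injective, since $\delta^1 f$ is a nontrivial relative class that becomes a coboundary in $C^\bullet_{\rm NDH}$. Your instinct that the equality of $1$-cochain groups needed to be supplemented was right; the difficulty is that neither your extension argument nor the paper's one-line observation actually closes this gap.
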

\begin{sloppypar}
\begin{proof}
	To see that the first connecting morphism is trivial, 
we	observe that $ C^1_{{\rm N} \{G\}} (X,A)= C^1_{\rm NDH}(X,A)$. 
\end{proof}
\end{sloppypar}

\begin{proposition}
\label{prop:equivariance}
	Let $G\leq X$ be a subgroup of $X$. Then relative second 
		cocycles  $\phi \in  \widehat{Z}^2_{{\rm N} \{G\}}(X,A)$  are equivariant with respect to action of $G$ defined by componentwise multiplication, 	$$
	\phi(x,y,z) = \phi(xg,yg,zg),
	$$
	for all $x,y,z\in X$ and all $g\in G$ with $y\neq z$ in different $G$-cosets.
\end{proposition}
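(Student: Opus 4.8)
The plan is to read the equivariance off directly from the $2$-cocycle identity $(*)$ specialized at a convenient pair of arguments. The only input needed, beyond $(*)$ itself, is the characterization of relative nondegenerate cocycles recalled just before Proposition~\ref{prop:longexact}: a $\phi\in\widehat{Z}^2_{{\rm N}\{G\}}(X,A)$ satisfies $(*)$ on all nondegenerate arguments and vanishes identically on chains of $C_2^{{\rm N}\{G\}}(X)$, that is, on every triple $(a,b,c)$ with $b\neq c$ and $bG=cG$.

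First I would discard the vacuous case $g=e$ and assume $g\neq e$. Fix $x,y,z\in X$ with $y\neq z$; this covers in particular the hypothesis $yG\neq zG$ of the statement. I would then apply $(*)$, with $\phi$ in the role of the generic cocycle $\psi$, to the five arguments $(x,y,z,u,v)$ with $u=e$ and $v=g$. This is legitimate because all four triples occurring in $(*)$ remain nondegenerate: $(x,y,z)$ and $(xu^{-1}v,yu^{-1}v,zu^{-1}v)$ since $y\neq z$, and $(x,u,v)$ and $(xy^{-1}z,u,v)$ since $u=e\neq g=v$. Since $u^{-1}v=g$, the second of these triples is precisely $(xg,yg,zg)$, so $(*)$ reduces to
$$\phi(x,y,z)-\phi(xg,yg,zg)-\phi(x,e,g)+\phi(xy^{-1}z,e,g)=0.$$

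To finish, I would observe that $e$ and $g$ both lie in $G$, hence in the single left coset $eG=gG=G$, and are distinct; therefore $(x,e,g)$ and $(xy^{-1}z,e,g)$ are generators of $C_2^{{\rm N}\{G\}}(X)$, on which $\phi$ vanishes. The displayed relation then collapses to $\phi(x,y,z)=\phi(xg,yg,zg)$, which is the asserted $G$-equivariance. I do not foresee any genuine difficulty; the one point worth a line of care is the admissibility check above, namely that the four triples stay nondegenerate after the substitution. One may also remark, for context, that right translation by an element of $G$ carries left $G$-cosets to left $G$-cosets (no normality needed), which is the structural reason such equivariance holds and also lets one extend the conclusion to the case $yG=zG$, where both sides vanish.
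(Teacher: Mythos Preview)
Your proof is correct and follows essentially the same route as the paper: specialize $(*)$ at $u=1$, $v=g\in G$, and use that $\phi$ vanishes on the two terms $\phi(x,1,g)$ and $\phi(xy^{-1}z,1,g)$ because $1$ and $g$ lie in the same left $G$-coset. Your extra care in verifying that the $3$-chain $(x,y,z,e,g)$ is nondegenerate (so that $(*)$ applies in the NDH complex) and your remark extending the conclusion to the case $yG=zG$ are welcome additions that the paper leaves implicit.
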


\begin{proof}
	This follows from $2$-cocycle condition $(*)$ with $u= 1$, $v\in G$ and $z$, $y$ in different $G$-cosets, since we obtain from $(*)$
	$$
	\phi(x,y,z) - \phi(xv, yv, zv) - \phi(x,1,v) + \phi(xy^{-1}z, 1, v) = 0,
	$$
	which reduces to the equivariance since $\phi$ is zero when evaluated on chains with $u,v$ in the same $G$-coset. 
\end{proof}

Proposition~\ref{prop:longexact} presents a useful way of breaking nondegenerate cohomology into smaller parts easier to compute.

\begin{example}\label{ex:Z4}
{\rm
Let $X=\Z_4$, $\Z_2 \cong G=\{ 0, 2 \} < X$ and $A=\Z$. 
A cocycle $\phi \in    \widehat{Z}^2_{{\rm N} \{G\}}(X,A) $ satisfies the $2$-cocycle condition $(*)$ and
$\psi(x, y, y)=0$ for all $x, y \in X$ (nondegenerate condition) and 
$\phi(x, 0, 2)=\phi(x,2,0)=0$, $\phi(x, 1,3)=\phi(x,3,1)=0$ for all $x\in X$ (the localized quotient condition, the definition of   $\widehat{Z}$). 
Proposition~\ref{prop:equivariance} implies
$\phi(x,y,z)=\phi(x+2,y+2,z+2)$ for all $x,y,z \in X$. 
Further computations in Section~\ref{app:exactseq} implies  $\widehat{H}^2_{ {\rm N} \{ G \} } (X, A)\cong A $.
Another  computation in Appendix~\ref{app:exactseq} shows that 
$H^2_{ {\rm N} \{ G \} } (X, A) \cong A^{\oplus 6}$.
Hence from Proposition~\ref{prop:longexact} we obtain the following exact sequence:
$$ 0 \rightarrow \widehat{H}^2_{{\rm N} \{G\}}(X,A) 
\stackrel{j^*}{\rightarrow} 
H^2_{ {\rm NDH} } (X, A)
\stackrel{i^*}{\rightarrow} 
i^*( H^2_{ {\rm N} \{ G \} } (X, A) ) 
 \rightarrow  0
$$
where $i^*( H^2_{ {\rm N} \{ G \} } (X, A) ) $ is isomorphic to $A^{\oplus r}$ with $r \leq 6$. 
Hence 
$H^2_{ {\rm NDH} } (X, A)$ is free of rank $\leq 7$. 
Detailed computations are included in Appendix~\ref{app:exactseq}.

}
\end{example}

 This procedure
 of localization
  can be iterated in order to compute $\widehat{H}^n_{{\rm N} \{G\}}(X,A)$ 
  as follows. 

Suppose $G,F \leq X$ are subgroups of $X$, and $C_n^{\rm NDH}(X)$ has been localized at $G$ to obtain complexes $C_n^{\rm N\{G\}}(X)$ and $\widehat C_n^{\rm N\{G\}}(X)$. We define now subgroups of relative chains $\widehat C_n^{\rm N\{G\}}(X)$ by the quotient 
$$
\frac{C_n^{\rm N\{F\}}(X)}{C_n^{\rm N\{F\}}(X)\cap C_n^{\rm N\{G\}}(X)} \cong \frac{C_n^{\rm N\{F\}}(X)+C_n^{\rm N\{G\}}(X)}{C_n^{\rm N\{G\}}(X)}\leq \widehat C_n^{\rm N\{G\}}(X).$$

The groups so defined constitute a subcomplex of $\widehat C_n^{\rm N\{G\}}(X)$, with the differential obtained from that of $\widehat C_n^{\rm N\{G\}}(X)$ by restriction. This subcomplex is denoted by the symbol $C_n^{\rm N\{G,F\}}(X)$, and its homology by $H_n^{\rm N\{G,F\}}(X)$. We therefore obtain a long exact sequence computing relative homology $\widehat H_n^{\rm N\{G\}}(X)$, where the relative homology of $H_n^{\rm N\{G,F\}}(X)$ is defined explicitly by the chain complex 
$$
\widehat C_n^{\rm N\{G,F\}}(X) := \frac{\frac{C_n^{\rm NDH}(X)}{C_n^{\rm N\{G\}}(X)}}{\frac{C_n^{\rm N\{F\}}(X)+C_n^{\rm N\{G\}}(X)}{C_n^{\rm N\{G\}}(X)}} \cong \frac{C_n^{\rm NDH}(X)}{C_n^{\rm N\{F\}}(X)+C_n^{\rm N\{G\}}(X)}.
$$

\begin{remark}\label{rmk:semidirect} 
	{\rm 
		Observe that if subgroups $G$ and $F$ of $X$  satisfy that if $y\neq z$ are in the same $F$-coset, then $y,z$ are in different $G$-cosets, 
				the description of groups 	$C_n^{\rm N\{G,F\}}(X)$ becomes simpler as $C_n^{\rm N\{G,F\}}(X)$ coincide with 
		$C_n^{\rm N\{F\}}(X)$, 
		since $C_n^{\rm N\{F\}}(X)\cap C_n^{\rm N\{G\}}(X) = 0$ for all $n$. 
		This is the case, for example, when X is a semidirect product $X = F\rtimes G$. In fact let $(f_1,g_1) \cdot (f_2,g_2) = (f_1\phi_{g_1}(f_2),g_1g_2)$ denote the product operation in $X$, where $\phi$ indicates the automorphism $G \longrightarrow {\rm Aut}(F)$ determining the semidirect product structure. Let $(x,y) \neq (x',y')$ be in the same $G$-coset, where $G$ is identified with the subgroup $1\times G$ of $X$. Then we have the equality of sets $(x\phi_y(1)\times y)G = (x'\phi_{y'}(1)\times y')G$, from which $x = x'$. If $(x,y)$ and $(x',y')$ are in the same $F$-coset as well, then it is easily seen that it follows that $y = y'$, against the fact that $(x,y) \neq (x',y')$ by assumption. Therefore we have 
		$C_n^{\rm N\{F\}}(X)\cap C_n^{\rm N\{G\}}(X) = 0$ for all $n$
				 as claimed.  
		
	}
\end{remark}
	
	By dualization, we obtain an iterated localization for cochain complexes, and associated cohomologies that inherits sub/superscirpts as in Definition~\ref{def:relativehom}, in cohomological notation.  The cocycles are explicitly described as follows. A cochain $\phi\in  C^n_{\rm N\{G,F\}}(X)$ is an $n$-cocycle if and only if it vanishes on chains localized at $G$, and satisfies the $n$-cocycle condition on chains localized at $F$. A cochain $\psi\in \widehat C^n_{\rm N\{G,F\}}(X)$ is a cocycle by definition, if and only if it vanishes on chains localized either at $F$ or at $G$, and satisfies the $n$-cocycle condition.
	
	\begin{remark}\label{rmk:injection}
		{\rm 
	Applying Proposition~\ref{prop:longexact} we see that relative cohomology of iterated localization $\widehat H^2_{\rm N\{G,F\}}(X,A)$ injects into $H^2_{\rm NDH}(X,A)$, since $\widehat H^2_{\rm N\{G,F\}}(X,A) \hookrightarrow \widehat H^2_{\rm N\{G\}}(X,A) \hookrightarrow H^2_{\rm NDH}(X,A)$. A special case is when $X = F\rtimes G$, and therefore using Remark~\ref{rmk:semidirect} we have $C^n_{\rm N\{G,F\}}(X,A) \cong C^n_{ {\rm N} \{F\}}(X,A)$, so that $H^2_{\rm N\{G,F\}}(X,A) = H^2_{\rm N\{F\}}(X,A)$, from which we get an injection of $\widehat H^2_{\rm N\{F\}}(X,A)$ into $H^2_{\rm NDH}(X,A)$. Moreover, since relative second cohomology is equivariant, this observation provides a useful way of constructing nontrivial cohomology classes in $H^2_{\rm NDH}(X,A)$ from computations on smaller chain complexes. 
	}
	\end{remark}
	
	We utilize iterated localizations in the following example. 
	
\begin{example}\label{ex:D3}
	{\rm 
		Let $X=D_3$ be the dihedral group  of order $6$ and let $A = \Z$. 
By Proposition~\ref{prop:degcohomology} it follows that $H^2_{\rm DH}(X,A) \cong A$.  
We 
consider the nondegenerate cohomology of $X$. 

Recall  that the dihedral group $X=D_3$ has a presentation 
$\braket{a,r\ | \ a^2 = r^3 =1,\ ara = r^{-1}}$, where $a$ represents a reflection and $r$ represents a rotation. Consider the subgroup $G = \{1,a\}$ generated by reflection $a$. Considering the coset complex localized at $G$ we obtain from Proposition~\ref{prop:longexact} a long exact sequence relating nondegenerate cohomology of $X$ to localization at $G$. In particular in dimension 2 we have
$$
0 \rightarrow \widehat{H}^2_{{\rm N} \{G\}}(X,A) \xrightarrow{j^*} H^2_{\rm NDH}(X,A) \xrightarrow{i^*} H^2_{{\rm N} \{G \}}(X,A) \xrightarrow{\delta} \widehat{H}^3_{{\rm N} \{ G \}}(X,A) \rightarrow \cdots .
$$
Let $F = \{1, r, r^2\}$ indicate the subgroup generated by rotations. Since if $x, y$ are in the same left $F$-coset it follows that $x$ and $y$ are not in the same left $G$-coset. We iterate the procedure of localization at $F$, to compute the relative cohomology to $G$, as observed in the paragraph above this example. So we obtain a second long exact sequence that at dimension 2 takes the form
$$
0 \rightarrow \widehat{H}^2_{{\rm N} \{G,F\}}(X,A) \xrightarrow{j^*} \widehat H^2_{\rm N\{G\}}(X,A) \xrightarrow{i^*} H^2_{{\rm N} \{G,F \}}(X,A) \xrightarrow{\delta} \widehat{H}^3_{{\rm N} \{ G,F \}}(X,A) \rightarrow \cdots
$$
using the same notation as above. 
As in Remark~\ref{rmk:injection} we have that $\widehat{H}^2_{{\rm N} \{G,F\}}(X,A)$ injects into $H^2_{\rm NDH}(X,A)$ since the morphisms $j^*$ in the two long exact sequences are both injections.

\begin{sloppypar}
Computation in Appendix~\ref{app:exactseq} shows that  $\widehat{H}^2_{{\rm N} \{G,F\}}(X,A) = 0$, from which it follows that $\widehat{H}^2_{{\rm N} \{G\}}(X,A) \cong i^* (\widehat{H}^2_{{\rm N} \{G\}}(X,A)) \leq H^2_{{\rm N} \{G,F \}}(X,A)$.
It is also computed that $\widehat H^2_{\rm N\{G\}}(X,A)$ has rank $2$ and that
 ${\rm rank}\ H^2_{\rm N\{G\}}(X,A) \leq 9$. Then  $H^2_{\rm NDH}(X,A)$ corresponds to a 
 extension 
 \end{sloppypar}
$$
0 \longrightarrow \widehat{H}^2_{{\rm N} \{G\}}(X,A) \longrightarrow H^2_{\rm NDH}(X,A)  \longrightarrow i^*(H^2_{\rm NDH}(X,A)) \longrightarrow 0,
$$
and it has rank at most $11$.
See Appendix~\ref{app:exactseq} for details.

}
\end{example}

\section{The fundamental heap of framed links}\label{sec:fund}

In this section we define and study the fundamental heap of framed links. The definition is analogous to presentations of knot groups and quandles, and defined by generators assigned to double arcs and relations assigned to crossings.
Then colorings defined in Section~\ref{sec:color} can be regarded as heap homomorphism from the fundamental heap to a given group heap. 
We relate the fundamental heap to Vinberg groups, and Wirtinger presentations of the knot group.

\subsection{Definitions and examples}\label{sec:hdef}

First we present definitions and examples.

\begin{definition}\label{def:fund}
{\rm
The {\it fundamental heap} $h(L)$ of an unoriented  framed link $L$ is defined as follows.
Let $D$ be a diagram of  $L$ with single arcs with blackboard framing. 
We define $h(L)$ by a presentation using $D$ and show that it is well-defined.
Each single arc  in Figure~\ref{crossing} (A) represents  double (parallel) arcs as in Figure~\ref{crossing} (B).
	Let ${\cal  A}$ be the set of doubled arcs.
Each of double arcs is assigned a generator. In the figure, generators are represented by letters 
(labels) $x,y,u,v,z,w$. Letters (labels) assigned to arcs are identified as (the names of) arcs themselves, and 
regarded as elements of ${\cal A}$. 
Then the set of generators of $h(L)$ is ${\cal A}$, which is identified with letters assigned.

For each crossing, a pair of relations is defined.
In Figure~\ref{crossing}, the relations are defined as $\{ z=x u^{-1} v, w=y u^{-1} v \}$. 
Specifically, when the arc $x$ goes under the arcs $(u, v)$, in this order, to the arc $z$, then 
the relation is defined as $ z=x u^{-1} v$, and similar from $y$ to $w$.
The set of union of the two relations over all crossings is denoted by ${\cal T}$ and constitutes 
the relation of $h(L)$.

The fundamental heap $h(D)$ is the group heap defined by the group presentation
with a set of generators corresponding to double arcs, and the set of relations assigned to all crossings:
$
\langle
\ 
{\cal A}'  \ | \ 
{\cal T} 
\
\rangle 
$.
}
\end{definition}

\begin{lemma}
The fundamental heap $h(L)$ is well-defined.
\end{lemma}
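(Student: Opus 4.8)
The plan is to show that the group heap presented by $\langle \mathcal{A} \mid \mathcal{T} \rangle$ is independent of the choice of diagram $D$ representing the framed link $L$, i.e.\ invariant under the three Reidemeister moves adapted to blackboard framing (the framed, or regular isotopy, Reidemeister moves: RI is replaced by the ribbon move that does not change writhe, together with RII and RIII). Since any two blackboard-framed diagrams of the same framed link are related by a finite sequence of such moves, it suffices to exhibit, for each move, an isomorphism between the groups presented before and after the move, compatible with the heap structure (which is automatic, since a group isomorphism always induces a group-heap isomorphism).

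First I would set up the bookkeeping: fix a diagram $D$ and recall that each single arc is doubled, each double arc carries a generator, and each crossing contributes the two relations $z = xu^{-1}v$, $w = yu^{-1}v$ in the notation of Figure~\ref{crossing}. For the RII move, the two crossings created (or destroyed) introduce two new generators for the two short doubled arcs between the crossings and two pairs of relations; one checks by direct substitution that these relations let one eliminate the new generators via Tietze transformations, recovering exactly the presentation of the diagram without the bigon. For the framed RI move (adding a curl compensated so the writhe is unchanged, i.e.\ the telephone-cord/$C_0$ triviality, or more precisely the kink-pair move), similarly the new generators introduced at the self-crossings are eliminated by the relations, again via Tietze moves. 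For the RIII move, this is precisely the content illustrated in Figure~\ref{heaptypeIII} and discussed in the introduction: the generators on the three over/under arcs on the two sides of the move are matched, and the relation forced on the bottom output arc is $T(T(x,y,z),u,v)$ on one side and $T(T(x,u,v),T(y,u,v),T(z,u,v))$ on the other, which agree because the group heap operation is ternary self-distributive (a fact already recorded in Section~\ref{sec:Pre}). So the two presentations define isomorphic groups.

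The key steps, in order, would be: (1) observe that a group isomorphism of presented groups induces an isomorphism of the associated group heaps, so it suffices to check each move at the level of group presentations; (2) handle RII by writing down the before/after presentations and giving explicit Tietge transformations eliminating the two added generators; (3) handle the framed RI (ribbon/kink) move analogously; (4) handle RIII using ternary self-distributivity of $xy^{-1}z$ to see that the two sets of relations present the same group. Finally, I would invoke the fact that blackboard-framed diagrams of $L$ are connected by these moves to conclude $h(L)$ is well-defined up to isomorphism of group heaps, independent of $D$.

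The main obstacle I expect is purely bookkeeping: carefully orienting and labeling the eight doubled arcs at each crossing and all the arcs involved in each Reidemeister move, keeping track of which arc goes under which pair and in which order (since the relation $z = xu^{-1}v$ is sensitive to the order of $u,v$), and verifying that the Tietze eliminations for RI and RII are consistent for all under/over and sign configurations of the crossings. The RIII verification is conceptually the crux but is already essentially done by the self-distributivity identity; the real work is confirming that the correspondence of generators on the unchanged arcs is forced and that no spurious relations appear. There is also the minor point of confirming the framed version of RI (writhe-preserving) rather than the classical RI is the right move for blackboard framings, and that the presentation is genuinely insensitive to it.
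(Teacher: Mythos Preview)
Your proposal is correct and follows essentially the same approach as the paper: verify invariance under the Reidemeister moves appropriate to blackboard framing (type II, type III, and the kink-cancelation move replacing RI), with the type III check reducing to ternary self-distributivity as in Figure~\ref{heaptypeIII}.

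One point the paper makes explicit that you fold into ``bookkeeping'' deserves to be singled out as a separate preliminary step: since $L$ is \emph{unoriented}, the relation at a crossing must be shown to be independent of the direction in which the under-arc is traversed. The paper checks this first: traversing from $z$ to $x$ (rather than $x$ to $z$) one meets the over-arcs in the order $(v,u)$, giving the relation $x = zv^{-1}u$, which is equivalent to $z = xu^{-1}v$. Without this observation the presentation $\langle \mathcal{A}\mid \mathcal{T}\rangle$ is not yet well-defined even for a fixed diagram, so it should precede the Reidemeister analysis rather than be absorbed into the orientation bookkeeping of the moves.
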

\begin{proof}
First we observe that the relations do not depend on  the choice of directions of under-arc.
Suppose we traverse the left under-arc of Figure~\ref{crossing} (B) from $z$ to $x$, instead. 
Then the letters of over-arcs that one encounters in this direction is $(v, u)$, in this order.
Hence the convention of defining the relation for this arc is $x=z v^{-1} u$. This is equivalent to the 
relation $z=x u^{-1} v $ defined from the original choice of direction from $x$ to $z$.
The other under-arc is similar.

Second, we show that groups defined are isomorphic under the cancelation move depicted in 
Figure~\ref{cancel}.
Computing from the left, the middle parallel arcs (bottom and top arcs, respectively), are labeled by $xy^{-1}x $ and $x$. Similarly from the right we obtain
${x'}  {y'}^{-1}  {x'} $ and ${x'} $. 
Hence we have ${x'} =x$ and ${y'} =y$ as desired.

It is well known
	\cite{FR}
that the framed link diagrams with blackboard framing are related by sequences of Reidemeister moves of type II, III, and the cancelation move in 
Figure~\ref{cancel}. 
By checking isomorphisms before and after Reidemeister type II and III moves in  a routine manner, we find that the fundamental heap is well defined up to isomorphism (as a group, and hence as a heap).
 The invariance under  
 type III move is indicated in Figure~\ref{heaptypeIII}. 
\end{proof}

\begin{figure}[htb]
\begin{center}
\includegraphics[width=3.8in]{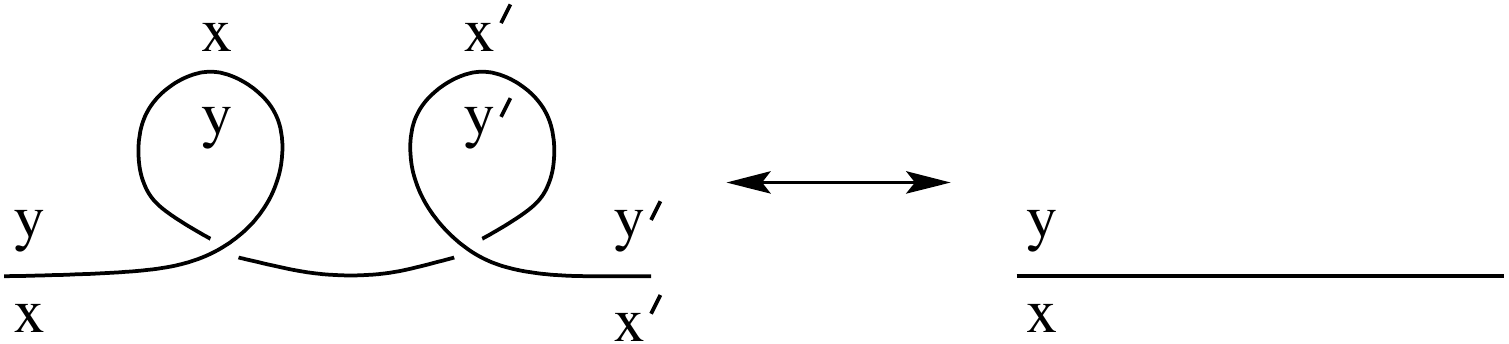}
\end{center}
\caption{Cancelation of a pair of crossings}
\label{cancel}
\end{figure}

Recall  the  telephone cord with writhe $n$, $C_n$, represented by  a diagram depicted in Figure~\ref{telecord}, consisting of small $n \in \Z$ ($n>0$) kinks with $n$ positive crossings. 
Recall also  that either choice of orientations gives rise to positive crossings.  
A negative integer $n$ represents negative  crossings. 
	As indicated in the figure, from the leftmost labels $(x,y)$, after the first crossing,
the labels are computed under the relation to be $(y, yx^{-1}y)$. We reduce the generators 
by applying the relations inductively, and obtain the following.

\begin{lemma}\label{lem:telecord}
Let $C_n$ be a  telephone cord with writhe $n$. Let $x$, $y$ be the two generators assigned at the bottom and top parallel leftmost arcs as indicated in Figure~\ref{telecord}. Then the 
	labels 
at the rightmost bottom and top arcs are 
	labeled
by $x (x^{-1} y  )^n $ and $y ( x^{-1}y )^{n}  $, respectively.

\end{lemma}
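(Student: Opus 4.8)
The plan is to prove the formula by induction on $|n|$, tracking how the pair of labels carried by the two parallel strands changes as we pass through each successive kink. First I would record the single-crossing rule: reading Figure~\ref{telecord} together with the crossing relation of Definition~\ref{def:fund}, a positive kink sends the incoming pair $(p,q)$ of bottom/top labels to the outgoing pair $(q,\,qp^{-1}q)$ after reducing via the relations. This is exactly the step $(x,y)\mapsto(y,yx^{-1}y)$ already displayed in the text for the leftmost crossing. Since $C_n$ is a concatenation of $n$ identical positive kinks, Definition~\ref{def:fund} assigns the same relation at each of them, so the same substitution $\tau\colon(p,q)\mapsto(q,qp^{-1}q)$ is applied $n$ times, and the labels at the rightmost bottom and top arcs are the two coordinates of $\tau^{\,n}(x,y)$.

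It then remains to verify the inductive claim $\tau^{\,k}(x,y)=\bigl(x(x^{-1}y)^{k},\,y(x^{-1}y)^{k}\bigr)$. The base case $k=0$ is immediate. For the inductive step, writing $p=x(x^{-1}y)^{k}$ and $q=y(x^{-1}y)^{k}$, one checks the two one-line group identities $q=x(x^{-1}y)^{k+1}$ (using $xx^{-1}=1$) and $qp^{-1}q=y(x^{-1}y)^{k+1}$ (after cancelling $(x^{-1}y)^{k}(x^{-1}y)^{-k}$ and using $yx^{-1}y=y(x^{-1}y)$). These are the only computations in the argument, and they are routine. This gives the claim, hence the lemma, for $n\geq 0$.

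For negative writhe I would use that a kink carrying the opposite crossing information contributes the inverse substitution $\tau^{-1}\colon(p,q)\mapsto(pq^{-1}p,\,p)$ (one verifies $\tau(\tau^{-1}(p,q))=(p,q)$ directly), so the rightmost labels of $C_n$ with $n<0$ are the coordinates of $\tau^{-1}$ applied $|n|$ times to $(x,y)$. Since $\tau^{-1}(x,y)=(xy^{-1}x,x)=\bigl(x(x^{-1}y)^{-1},\,y(x^{-1}y)^{-1}\bigr)$, the same inductive step run downward (equivalently, applying the $n\geq 0$ computation to $\tau^{-1}$ in place of $\tau$) yields $\tau^{\,n}(x,y)=\bigl(x(x^{-1}y)^{n},\,y(x^{-1}y)^{n}\bigr)$ for every $n\in\Z$, which is the assertion.

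I do not anticipate a genuine obstacle: the content of the proof is an induction built on a single elementary identity in the group presenting $h(C_n)$. The only points deserving a sentence of care are the uniformity of the local rule $\tau$ across all $n$ crossings — immediate because the kinks are identical translated positive crossings — and keeping the bottom/top bookkeeping together with the sign of $n$ straight, which the maps $\tau^{\pm 1}$ handle cleanly.
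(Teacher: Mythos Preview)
Your proposal is correct and follows exactly the approach the paper indicates: the paper does not give a separate proof but simply remarks, just before the lemma, that from $(x,y)$ the first crossing produces $(y,yx^{-1}y)$ and that one ``reduce[s] the generators by applying the relations inductively.'' Your write-up makes this induction explicit, verifies the one-step identity, and additionally treats the negative-writhe case via $\tau^{-1}$, which the paper leaves implicit.
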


\begin{proposition}
Let $\hat{C}_n$, $n \in \Z$,   be the unknot with $n$  kinks (the closure of $C_n$ by a trivial arc). 
Then the  fundamental heap $h(\hat{C}_n)$ is isomorphic to the free product
$\Z * \Z_n$.
\end{proposition}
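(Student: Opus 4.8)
The plan is to compute the presentation of $h(\hat C_n)$ directly from a diagram and simplify it using the relations. Take the standard diagram of $\hat C_n$: the telephone cord $C_n$ of Figure~\ref{telecord} with its two ends joined by a trivial (crossingless) band. This diagram has exactly $n$ crossings, all of the same sign, and a natural set of doubled arcs. Label the two leftmost doubled arcs $x$ and $y$; by Lemma~\ref{lem:telecord}, after traversing all $n$ kinks the rightmost bottom and top doubled arcs are labelled $x(x^{-1}y)^n$ and $y(x^{-1}y)^n$. Closing up the cord by the trivial band forces these to be identified (up to the trivial band relabelling, which is the identity) with $x$ and $y$ respectively. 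Hence the only surviving relation among the two fundamental generators $x,y$ is $x(x^{-1}y)^n = x$, i.e.\ $(x^{-1}y)^n = 1$, together with the trivially satisfied $y(x^{-1}y)^n = y$.

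Next I would set up new generators to recognize the group. Put $a = x$ and $t = x^{-1}y$, an invertible change of generators, so that $x = a$, $y = at$. Under the one relation $t^n = 1$ this gives the group presentation $\langle a, t \mid t^n = 1\rangle$, which is precisely the free product $\Z * \Z_n$, with $\Z$ generated by $a$ and $\Z_n$ generated by $t$. Since $h(\hat C_n)$ is defined as the group heap of the group given by this presentation, and the group is $\Z * \Z_n$, the fundamental heap is the group heap of $\Z * \Z_n$. (When $n = 0$ the relation is empty and one gets $\Z * \Z$, consistent with $\Z_0 = \Z$; for $n < 0$ the cord has $|n|$ negative crossings and the same computation via Lemma~\ref{lem:telecord} with $n$ replaced by $-|n|$ yields $t^{|n|} = 1$, so $h(\hat C_n) \cong \Z * \Z_{|n|}$.)

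The step requiring the most care is the bookkeeping at the closure: one must verify that the trivial closing band contributes no new relations and that it correctly identifies the rightmost pair $\big(x(x^{-1}y)^n,\, y(x^{-1}y)^n\big)$ with the leftmost pair $(x,y)$ — that is, that there is no hidden twist or reversal introduced by routing the band around. This is where the well-definedness lemma for $h(L)$ (invariance under Reidemeister II, III and the cancellation move) does the work: any particular way of drawing the closing band is related to the ``flat'' one by those moves, so the presentation is independent of that choice, and the flat band simply equates the two pairs of labels. Everything else — Lemma~\ref{lem:telecord} itself, and the elementary change of generators $(x,y) \mapsto (a,t)$ — is routine. I expect no genuine obstacle beyond this diagrammatic verification.
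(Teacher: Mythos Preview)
Your proof is correct and follows essentially the same route as the paper: apply Lemma~\ref{lem:telecord} to read off the single relation $(x^{-1}y)^n=1$ after closing, then change generators $(x,y)\mapsto(x,x^{-1}y)$ to recognize $\langle x,\alpha\mid \alpha^n\rangle\cong \Z*\Z_n$. The paper's version is terser and omits your discussion of the closing band and the edge cases, but the argument is the same.
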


\begin{proof}
By Lemma~\ref{lem:telecord} we obtain the presentation
for $h(\hat{C}_n)$ to be 
$\langle \ x, y \ | \ (x^{-1} y )^n \ \rangle$.
Set $\alpha=x^{-1 } y  $. Then $y=  x \alpha$ and  the presentation can be rewritten as
$\langle \ x, \alpha \ | \ {\alpha}^n \ \rangle$, and the result follows.
The negative case is similar.
\end{proof}

For torus knots and links $T(2, n)$, we compute the following, with details delayed to Appendix~\ref{app:fund}.

\begin{lemma}\label{lem:torus2n}
	Let $\sigma_1$ indicate the standard 
	generator of the 
	2-string braid group $\mathbb B_2$
	represented by Figure~\ref{crossing} (left) by a ribbon diagram.
Let the top left (doubled) arcs be labeled  by a pair  $(x,y)$ and the right arcs be labeled by $(u,v)$.
Denote by $\sigma_1^n((x,y)\times (u,v)) $ the labels assined to the bottom double arcs of 
a diagram representing the braid word $\sigma_1^n$.  
Set $\alpha := x^{-1}y$ and $\beta := u^{-1}v$. Then we have the following:
	\begin{eqnarray*}
		\lefteqn{\sigma_1^n((x,y)\times (u,v))}\\
	&=&\begin{cases}
		(x\alpha^{-k}(\alpha\beta)^{k}, y\alpha^{-k}(\alpha\beta)^{k})\times (u\beta^{-k}(\alpha\beta)^k,v\beta^{-k}(\alpha\beta)^k) 
		\ \ {\rm if } \ \ n = 2k\\
		(u\beta^{-k}(\alpha\beta)^k,v\beta^{-k}(\alpha\beta)^k)\times (x\alpha^{-(k+1)} (\alpha\beta)^{k+1}, y\alpha^{-(k+1)} (\alpha\beta)^{k+1}) 
		\ \ {\rm if } \ \  n = 2k+1 .
	\end{cases}
	\end{eqnarray*}
	\end{lemma}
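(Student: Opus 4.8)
The plan is to prove the formula by induction on $n$, tracking how one application of the crossing relation transforms the labels. The base case $n=0$ is immediate: the braid word $\sigma_1^0$ is trivial, so the labels are $(x,y)\times(u,v)$, matching the $n=2k$ formula with $k=0$. For the inductive step I need the single-crossing transformation rule in the form dictated by Definition~\ref{def:fund}: at a positive crossing of $\mathbb B_2$ where the strand labeled by a pair $(p,q)$ passes under the strand labeled $(s,t)$, the under-strand emerges with labels $(p s^{-1} t, q s^{-1} t)$, while the over-strand keeps its labels $(s,t)$ but is now in the first position. In terms of the shorthand $\alpha' := p^{-1}q$ for the incoming under-pair and the outgoing pair, note that $(p s^{-1}t)^{-1}(q s^{-1}t) = t^{-1} s\, p^{-1} q\, s^{-1} t = (s^{-1}t)^{-1}\alpha'(s^{-1}t)$, which is the key conjugation identity that makes the powers of $\alpha\beta$ accumulate cleanly.

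The inductive step splits into two cases according to the parity of $n$. Suppose first $n = 2k$ and the labels after $\sigma_1^{2k}$ are
$$(x\alpha^{-k}(\alpha\beta)^k,\ y\alpha^{-k}(\alpha\beta)^k)\times (u\beta^{-k}(\alpha\beta)^k,\ v\beta^{-k}(\alpha\beta)^k).$$
Apply one more crossing $\sigma_1$: the left pair (call it $(p,q)$) goes under the right pair (call it $(s,t)$). Here $s^{-1}t = (\beta^{-k}(\alpha\beta)^k)^{-1} u^{-1} v\, \beta^{-k}(\alpha\beta)^k = (\alpha\beta)^{-k}\beta^k \beta \beta^{-k}(\alpha\beta)^k = (\alpha\beta)^{-k}\beta(\alpha\beta)^k$. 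Then the new under-pair is $(p s^{-1}t,\, q s^{-1}t)$, and a direct simplification gives $p s^{-1} t = x\alpha^{-k}(\alpha\beta)^k (\alpha\beta)^{-k}\beta(\alpha\beta)^k = x\alpha^{-k}\beta(\alpha\beta)^k = x\alpha^{-(k+1)}(\alpha\beta)^{k+1}$, using $\alpha^{-k}\beta(\alpha\beta)^k = \alpha^{-(k+1)}(\alpha\beta)^{k+1}$. Similarly for $q$. The over-pair $(s,t) = (u\beta^{-k}(\alpha\beta)^k, v\beta^{-k}(\alpha\beta)^k)$ moves into first position, giving exactly the claimed $n = 2k+1$ formula. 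The case $n = 2k+1 \to n = 2k+2$ is the mirror computation: now the pair $(u\beta^{-k}(\alpha\beta)^k,\ldots)$ is on the left and passes under $(x\alpha^{-(k+1)}(\alpha\beta)^{k+1},\ldots)$; the same conjugation bookkeeping produces $(u\beta^{-(k+1)}(\alpha\beta)^{k+1},\ldots)$ for the new under-pair, and reinstating the over-pair on the left recovers the $n = 2(k+1)$ formula with $k$ replaced by $k+1$.

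The main obstacle is purely notational rather than conceptual: one must keep straight which of the two strands currently occupies the "left" (first) position in the pair-of-pairs, since the crossing relation is asymmetric — only the under-strand's labels change — and which strand is the under-strand alternates with each successive $\sigma_1$. Once the single-crossing rule and the conjugation identity $(s^{-1}t)^{-1}\alpha'(s^{-1}t) = \alpha'{}$-conjugated-appropriately are set down explicitly, the two inductive cases are routine word manipulations in the free-product group. I would also remark, as a consistency check, that taking the closure and imposing $x = u$, $y = v$ (so $\alpha = \beta$) recovers the presentation $\langle x,y \mid (x^{-1}y)^n\rangle$ obtained for $h(\hat C_n)$ from Lemma~\ref{lem:telecord}, since then $\alpha^{-k}(\alpha\beta)^k = \alpha^{-k}\alpha^{2k} = \alpha^k$ and the relation forcing the two output strands to agree with the inputs becomes $\alpha^{\pm n} = 1$. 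The details of the two word computations are deferred to Appendix~\ref{app:fund}.
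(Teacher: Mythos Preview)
Your induction is essentially the same as the paper's: both apply one additional $\sigma_1$ to the inductive hypothesis, compute $s^{-1}t$ for the over-pair, and use the identity $\alpha^{-k}\beta(\alpha\beta)^k=\alpha^{-(k+1)}(\alpha\beta)^{k+1}$ to advance the exponent. The only cosmetic difference is your choice of base case $n=0$ versus the paper's $n=1,2,3$, and the paper explicitly notes that negative $n$ is handled symmetrically, which you omit.

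One caveat: your closing ``consistency check'' is off. Imposing $x=u$, $y=v$ in the $2$-braid formula does \emph{not} reproduce the telephone cord $\hat C_n$; the kink in Figure~\ref{telecord} is a self-crossing of a single doubled strand (sending $(x,y)\mapsto(y,yx^{-1}y)$), not a $\sigma_1$ crossing of two separate doubled strands. With $\alpha=\beta$ and $n=2k$ your formula gives the relator $\alpha^k$, not $\alpha^n$, so the comparison to $h(\hat C_n)\cong\Z*\Z_n$ does not go through. This remark is tangential to the proof itself, which is correct, but you should drop or repair it.
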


Let $D_k^e$ be the group defined by the presentation
$\langle \ \alpha , \beta  \ | \ \alpha^{k} =\beta^{k} =(\alpha\beta)^k   \ \rangle$.
Let  $D^o_k$ be defined by 
$\langle \ \alpha, \beta \ | \ \alpha^{-(k+1)} (\alpha\beta)^{k+1}  \beta^{-k} (\alpha\beta)^k  \ \rangle$. 

\begin{proposition}\label{prop:toruslink}
Let $T(2, n)$ be the framed torus knot or link of  type $(2, n)$ with minimum crossing, that is, the closure of the 2-braid $\sigma_1^n$ that has
$n$ crossings.
Let $F_m$ denote the free group of rank $m$. Then its fundamental heap $h(T(2,n) ) $ is given by:
$$
\begin{cases}
F_2 * D^e_k & {\rm if}  \quad n=2k , \\
F_1 * D^o_k &  {\rm if}  \quad n=2k +1.
\end{cases}
$$
\end{proposition}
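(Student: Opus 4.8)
The plan is to start from the diagram of $T(2,n)$ as the closure of the braid word $\sigma_1^n$ in $\mathbb{B}_2$ and read off a presentation of $h(T(2,n))$ directly from Definition~\ref{def:fund}, then simplify it using Lemma~\ref{lem:torus2n}. First I would set up generators: the two double arcs at the top of the braid carry labels $(x,y)$ and the two at the right carry labels $(u,v)$, but closing up the braid identifies the bottom double arcs with the top ones. So the four generators are $x,y,u,v$, and the only relations come from equating the labels at the bottom of the $\sigma_1^n$ box (computed by Lemma~\ref{lem:torus2n}) with $(x,y)$ and $(u,v)$ respectively, with the correct pairing depending on the parity of $n$.

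The next step is to substitute $\alpha := x^{-1}y$ and $\beta := u^{-1}v$ as new generators in place of $y$ and $v$ (so $y = x\alpha$, $v = u\beta$), and to rewrite the closure relations. For $n = 2k$ even, Lemma~\ref{lem:torus2n} gives bottom labels $(x\alpha^{-k}(\alpha\beta)^k, y\alpha^{-k}(\alpha\beta)^k)\times(u\beta^{-k}(\alpha\beta)^k, v\beta^{-k}(\alpha\beta)^k)$; equating the first pair with $(x,y)$ yields $\alpha^{-k}(\alpha\beta)^k = 1$, i.e. $\alpha^k = (\alpha\beta)^k$, and equating the second pair with $(u,v)$ yields $\beta^k = (\alpha\beta)^k$. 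Together these are exactly the relations $\alpha^k = \beta^k = (\alpha\beta)^k$ defining $D^e_k$, and $x,u$ remain free generators subject to no relation, so $h(T(2,2k)) \cong F_2 * D^e_k$. For $n = 2k+1$ odd, Lemma~\ref{lem:torus2n} swaps the roles of the two pairs: the bottom left pair is $(u\beta^{-k}(\alpha\beta)^k, v\beta^{-k}(\alpha\beta)^k)$, which the closure identifies with $(x,y)$, and the bottom right pair is $(x\alpha^{-(k+1)}(\alpha\beta)^{k+1}, y\alpha^{-(k+1)}(\alpha\beta)^{k+1})$, identified with $(u,v)$. The first identification gives $x = u\beta^{-k}(\alpha\beta)^k$ (so $u$ is eliminated, or equivalently only $x$ survives as a free generator) together with $x\alpha = (x)\alpha = y = v\beta^{-k}(\alpha\beta)^k = (u\beta)\beta^{-k}(\alpha\beta)^k$; combining these forces a single relation, and chasing the algebra one is left with the relation $\alpha^{-(k+1)}(\alpha\beta)^{k+1}\beta^{-k}(\alpha\beta)^k = 1$ defining $D^o_k$, with one free generator left over, giving $h(T(2,2k+1)) \cong F_1 * D^o_k$.

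The main care needed — and the step I expect to be the genuine obstacle rather than the routine relation-reading — is the bookkeeping in the odd case: one must check that the two scalar-type relations coming from equating left and right bottom pairs with $(x,y)$ and $(u,v)$ collapse, after eliminating the redundant generator via Tietze transformations, to exactly the single relator of $D^o_k$ and no more, and that precisely one free generator survives (hence the $F_1$ factor rather than $F_2$ or $F_0$). This amounts to verifying that the substitution $u = x(\alpha\beta)^{-k}\beta^{k}$ (the inverse of the identification) turns the second pair of closure equations into the stated relator and nothing independent of it; I would do this by direct computation in the free group on $x,\alpha,\beta$. Finally, I would invoke the fact (established in the proof that $h(L)$ is well-defined) that the presentation read off from this particular minimal diagram computes the invariant, so no further diagram-independence argument is required here; details of the braid-word label computation are deferred to Appendix~\ref{app:fund}.
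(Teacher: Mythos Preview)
Your proposal is correct and follows essentially the same approach as the paper: set up generators $(x,y),(u,v)$ at the top of $\sigma_1^n$, invoke Lemma~\ref{lem:torus2n} for the bottom labels, impose the closure identifications, and change variables to $x,u,\alpha,\beta$ via $y=x\alpha$, $v=u\beta$. The paper's treatment of the odd case differs only cosmetically---it substitutes the relations $u=x\alpha^{-(k+1)}(\alpha\beta)^{k+1}$ and $v=y\alpha^{-(k+1)}(\alpha\beta)^{k+1}$ into $x=u\beta^{-k}(\alpha\beta)^k$ and $y=v\beta^{-k}(\alpha\beta)^k$ rather than the reverse, and observes that both resulting equations yield the same relator $\alpha^{-(k+1)}(\alpha\beta)^{k+1}\beta^{-k}(\alpha\beta)^k$---which is exactly the redundancy check you flagged as the main care point.
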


\begin{proof} We consider the diagram of $\sigma_1^n$.
Let $n=2k$ be even.
By Lemma~\ref{lem:torus2n}, if the top left and right arcs, respectively, are labeled by $(x,y)\times (u,v)$,  and setting $\alpha=x^{-1}y$ and $\beta=u^{-1}v$,  the bottom left and right arcs are  labeled 
by 
$(x \alpha^{-k} (\alpha \beta)^k , y \alpha^{-k} (\alpha \beta)^k  ) \times (u \beta^{-k} (\alpha \beta)^k , v \beta^{-k} (\alpha \beta)^k  )$. 
By equating top and bottom 
labels 
we obtain
$ x=x \alpha^{-k} (\alpha \beta )^k $, $y = y \alpha^{-k} (\alpha \beta )^k  $, $u=u  \beta^{-k} (\alpha \beta)^k $,  and $v=v \beta^{-k} (\alpha \beta)^k $. Hence we obtain relations  $\alpha^{-k} (\alpha \beta)^k =1$ and $ \beta^{-k} (\alpha \beta)^k =1$.
Thus $h(T(2, n))$ is generated by $x,y,u,v$ with relators $\alpha^{-k} (\alpha \beta )^k $ and $ \beta^{-k} (\alpha \beta)^k $. By adding generators $\alpha, \beta$ and relations $y=x\alpha$ and $v=u\beta$, we obtain
$h(T(2, n))=\langle \ x,u ,\alpha , \beta \ | \ \alpha^{k} =(\alpha \beta)^k = \beta^{k} \ \rangle . $

Similarly, for $n=2k+1$, the bottom left and right arcs are labeled by 
$(u  \beta ^{-k} (\alpha \beta )^k , v  \beta ^{-k} (\alpha \beta )^k  )$ and $(x  \alpha ^{-k-1} (\alpha \beta )^{k+1} , y  \alpha ^{-k-1} (\alpha \beta )^{k+1}  )$, and we obtain relations
$x =u  \beta ^{-k} (\alpha \beta )^k $, $y =v  \beta ^{-k} (\alpha \beta )^k $, $u =x  \alpha ^{-k-1} (\alpha \beta )^{k+1} $, 
and $v=y  \alpha ^{-k-1} (\alpha \beta )^{k+1}$.
By substituting the latter two into the first two, we obtain
$x = (x  \alpha ^{-k-1} (\alpha \beta )^{k+1} ) ( \beta ^{-k} (\alpha \beta )^k) $ and 
$y =( y \alpha ^{-k-1} (\alpha \beta )^{k+1}) (  \beta ^{-k} (\alpha \beta )^k )$.
Hence we obtain 
$$h(T(2, n))=\langle \ x , \alpha , \beta \ | \   \alpha ^{-k-1} (\alpha \beta )^{k+1}   \beta ^{-k} (\alpha \beta)^k  \ \rangle ,$$
and
 the result follows.
\end{proof}

\begin{example}\label{ex:small}
{\rm
In Proposition~\ref{prop:toruslink}, the case $n=2$ is the zero framed Hopf link. In this case
 $D^e_1$ is trivial, and we have $h(T(2,2))=F_2$.
 For $n=4$,  $D^e_2=\langle \ \alpha , \beta  \ | \ \alpha^2 =(\alpha \beta)^2 = \beta^{2}  \ \rangle$,
 which is an infinite group whose abelianization is 
 $\Z_2 \times \Z_2$.

For $n=3$, $D^o_1=\langle \ \alpha, \beta \ | \ \alpha^{-2} (\alpha \beta)^{2}  \beta^{-1} (\alpha\beta)  \ \rangle$.
Set $ \gamma=\alpha\beta \alpha$, then $\beta=\alpha^{-1} \gamma \alpha^{-1}$ can be eliminated and the relation becomes $\gamma^2 \alpha=1$,
so that $D^o_1=F_1 (=\langle \ \alpha , \gamma \ | \ \gamma^2 \alpha \ \rangle)$. Hence $h(T(2,3))=F_1 * F_1 = F_2$.

}
\end{example}

\begin{example} \label{ex:toruscolor}
{\rm
A presentation for  the fundamental heap for diagrams with a different framing for each component 
 can be obtained by modifying relations according  to added kinks  in Figure~\ref{telecord}.

	For example let us consider the torus link on two strands, with $2k$ crossings and framings $n$ and $m$, which we denote by $T_{(n,m)}(2,2k)$. Let $(x,y)$ and $(u,v)$ be the pairs of generators  of the upper (double) arcs of $T_{(n,m)}(2,2k)$.
	Then, applying Lemma~\ref{lem:telecord} the framings change words after kinks  to 
	$$
	( x ( x^{-1} y )^{n}, y(x^{-1} y)^n)\times ( u (u^{-1} v )^{m} , v (u^{-1} v )^m)
	= 
	( x \alpha^{n}, y \alpha^n) \times ( u \beta^{m}, v \beta^m)
	.
	$$
	Observe that $(x  \alpha^{n})^{-1}y \alpha^n =  \alpha$ and 
	$(u \beta^{m})^{-1} v \beta^m = \beta$, following the same notation as in Lemma~\ref{lem:torus2n}, which we now apply to obtain 
	$$
	(x \alpha^{n-k}(\alpha\beta)^k, y \alpha^{n-k} (\alpha\beta)^k)\times ( u  \beta^{m-k}(\alpha\beta)^k, v \beta^{m-k} (\alpha\beta)^k) .
	$$
	We therefore obtain relations 
	(compare with Proposition~\ref{prop:toruslink}):
	$$
x=x \alpha^{n-k}(\alpha\beta)^k,  \ 
y= y \alpha^{n-k} (\alpha\beta)^k ,
\ 
u= u  \beta^{m-k}(\alpha\beta)^k, \
v=v \beta^{m-k} (\alpha\beta)^k .
	$$
From $\alpha=x^{-1}y$ we can replace generators $x,y$ with $x, \alpha$ and similarly for $u, v$, 
so that we obtain 
$$ 
h(T_{(n,m)}(2, 2k) )=
 F_2 (x, u)* \langle \  \alpha, \beta \ | \ 
 \alpha^{n-k}(\alpha\beta)^k,
  \beta^{m-k}(\alpha\beta)^k
 \ \rangle .
$$
We note that 
the relators encode the 
framing of $T_{(n,m)}(2,2k)$. 
}
\end{example}

For  
observing variety of the fundamental heaps, we examine pretzel knots and links.
Recall that the pretzel knot or link $P(n_1, n_2, \ldots, n_r)$  ($r>1$) is given by  $2$-braids $b_i := \sigma_1^{n_i}$, $i=1, \ldots, r$,  where at the top and bottom,  the left arcs of the braid $b_i$ are connected with the right arcs of $b_{i-1}$, and the right ones are conneceted to the left arcs of $b_{i+1}$, identifying the subscripts modulo $r$, i.e. $b_1 = b_{r+1}$. See Figure~\ref{pretzel}.
Proofs are found in Appendix~\ref{app:fund}.

\begin{figure}[htb]
\begin{center}
\includegraphics[width=2in]{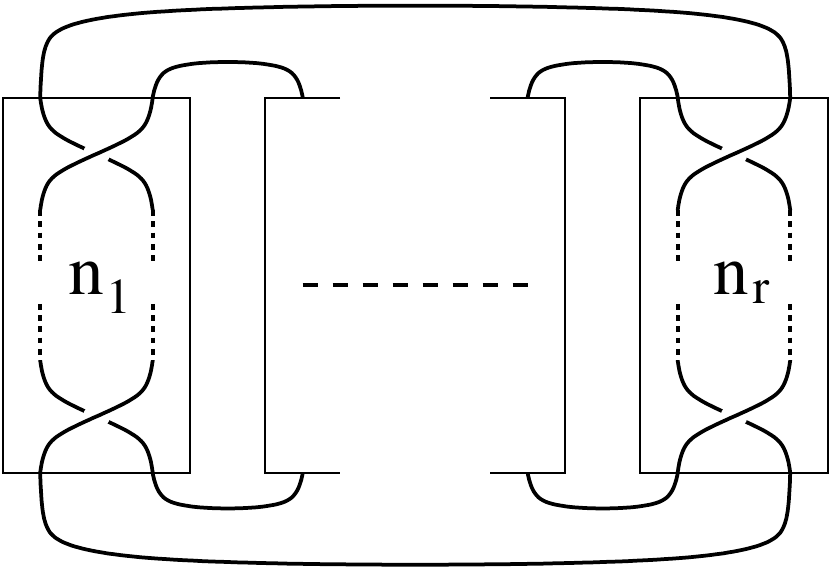}
\end{center}
\caption{Pretzel knots and links}
\label{pretzel}
\end{figure}

\begin{proposition}\label{thm:pretzelnoframe}
Let $P(n_1, n_2, \ldots ,n_r )$  denote the pretzel knot or link with $n_i$ crossings, $i=1, \ldots, r$.
If $n_i=2k_i$ are all even ($i=1, \ldots, r$), then 
 the fundamental heap 
 is described as $h (P(2k_1, \ldots, 2k_r) ) = F_r * G_r$ 
  where $F_r$ is the free group of rank $r$, and $G_r$ has the presentation 
	$\langle \  \alpha_i\mid \Theta_i, i  = 1,\dots, r \  \rangle $
with 
 $$\Theta_i=  \alpha_{i+1}^{-k_i+k_{i+1} }  (\alpha_i \alpha_{i+1}^{-1} )^{k_i}     (\alpha_{i+1} \alpha_{i+2}^{-1} )^{- k_{i+1}} , $$
 where the subscripts are intended to be modulo $r$, so that $\alpha_{r+1} = \alpha_1$. 
	\end{proposition}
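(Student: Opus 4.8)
The plan is to follow the strategy of Proposition~\ref{prop:toruslink}: push heap labels through each of the $r$ two-braids $b_i = \sigma_1^{2k_i}$ in the standard pretzel diagram of Figure~\ref{pretzel}, and then impose the identifications forced by the top and bottom closure arcs. Since $h(L)$ was shown to be a framed link invariant, we may compute from this diagram. First I would handle the top closure: each cap joining the right arcs of $b_{i-1}$ to the left arcs of $b_i$ carries no crossing, hence imposes no relation and only identifies two doubled arcs, so the top of the diagram carries precisely $r$ distinct doubled arcs, matching the $r$ components of $P(2k_1,\dots,2k_r)$. To the doubled arc entering $b_i$ on the left I assign a pair of generators $(x_i,y_i)$ and set $\alpha_i := x_i^{-1}y_i$; a Tietze move replaces $(x_i,y_i)$ by the generators $x_i,\alpha_i$. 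Thus $h(P)$ is presented on $x_1,\dots,x_r,\alpha_1,\dots,\alpha_r$, and only the relations remain to be determined.

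Next I would feed the top labels through each box $b_i = \sigma_1^{2k_i}$ using the even case $n = 2k_i$ of Lemma~\ref{lem:torus2n}, with the role of $\alpha$ played by $\alpha_i$ and that of $\beta$ by the (signed) power of $\alpha_{i+1}$ contributed by the arc on the right of $b_i$, which after the top cap is the arc feeding into $b_{i+1}$. This yields, for the two doubled arcs at the bottom of $b_i$, explicit words, each an original strand generator times a word built from powers of $\alpha_i$, $\alpha_{i+1}$ and $\alpha_i\alpha_{i+1}^{-1}$. The bottom caps again merely identify doubled arcs: the bottom-right doubled arc of $b_i$ agrees with the bottom-left doubled arc of $b_{i+1}$. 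Writing out this equality, cancelling the common strand generator on the left, and collecting the resulting powers of $\alpha_{i+1}$, one is left for each $i=1,\dots,r$ with a single relation among $\alpha_i,\alpha_{i+1},\alpha_{i+2}$ (indices mod $r$), which is exactly $\Theta_i = 1$. Since no $x_i$ occurs in any relation, the presentation splits off the free part, $h(P) = \langle x_1,\dots,x_r\rangle * \langle \alpha_1,\dots,\alpha_r \mid \Theta_1,\dots,\Theta_r\rangle = F_r * G_r$.

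The main obstacle is the orientation bookkeeping at the closure arcs rather than any step requiring new ideas. A cap reverses the sense in which a doubled arc is traversed, so one must track which of the two parallel strands of a given doubled arc plays the role of ``$x$'' and which of ``$y$'' as it enters the next box; this is exactly what determines whether the right-hand input of $b_i$ contributes $\alpha_{i+1}$ or $\alpha_{i+1}^{-1}$ and fixes the signs of the exponents $-k_i+k_{i+1}$ appearing in $\Theta_i$, and keeping these conventions consistent all the way around the cyclic diagram is the delicate part. A secondary point, analogous to the non-independence of the two relations attached to a single crossing in Definition~\ref{def:fund}, is to check that equating the second strands of two glued doubled arcs produces a relation already implied by the first-strand relation together with the definitions $\alpha_j = x_j^{-1}y_j$, so that only the $r$ relators $\Theta_i$ are needed rather than $2r$. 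I would record the full verification in Appendix~\ref{app:fund}.
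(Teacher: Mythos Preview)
Your proposal is correct and follows essentially the same approach as the paper: assign generators $(x_i,y_i)$ to the top-left doubled arcs of each $b_i$, observe that the cap identifies the top-right of $b_i$ with the top-left of $b_{i+1}$ in reversed order so that the $\beta$-variable of Lemma~\ref{lem:torus2n} becomes $\alpha_{i+1}^{-1}$, push labels through with the even case of that lemma, and equate the bottom arcs to extract the relators $\Theta_i$. The paper likewise notes that the two strand-equations at each bottom closure yield the same relator $\Theta_i$, and records the computation in Appendix~\ref{app:fund}.
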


In the above examples,  fundamental heaps are free products of free groups and some other groups.
We show that this is the case in general.

\begin{theorem}\label{thm:Wirt}
For any framed link $L$ of $r$ components, its fundamental heap is the free product $h(L)\cong F_r * \hat{h}(L)$ for some group $\hat{h}(L)$ 
generated by $r$ elements, and 
where
$F_r$ is the free group of rank $r$. 
\end{theorem}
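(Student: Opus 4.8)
The plan is to produce an explicit presentation of $h(L)$ from a diagram $D$ of $L$, and then perform a sequence of Tietze transformations that peels off $r$ free generators. Start by choosing a diagram $D$ with blackboard framing, and orient each component (the fundamental heap does not depend on orientation, so this is harmless). The generating set $\mathcal A$ consists of the doubled arcs; each doubled arc carries a pair of labels, the ``outer'' and ``inner'' strand of the ribbon. At each crossing we have the two relations $z = xu^{-1}v$ and $w = yu^{-1}v$ of Definition~\ref{def:fund}. Observe that, writing $\alpha := x^{-1}y$ for the ratio of the two labels on a doubled arc entering a crossing and $\beta := u^{-1}v$ for the over-arc, the crossing relations force the outgoing doubled arc to carry ratio $\alpha$ again on the arc coming from $x$ (since $(xu^{-1}v)^{-1}(yu^{-1}v) = v^{-1}ux^{-1}yu^{-1}v$, which is a conjugate of $\alpha$, not $\alpha$ itself — this needs care, see below) and ratio $\beta$ on the arc continuing the over-strand. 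So the ``ratio'' variables $\alpha$ behave in a Wirtinger-like fashion, while the ``outer label'' variables behave like a free parameter that is transported along.

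Concretely, I would introduce for each doubled arc $a$ the new generator $\alpha_a := x_a^{-1} y_a$ (ratio of inner to outer label), keeping one outer label $x_a$ per arc, and rewrite the presentation in the generators $\{x_a\} \cup \{\alpha_a\}$ via Tietze moves, eliminating the $y_a$. In these coordinates each crossing relation pair becomes: one relation expressing the outer label of an outgoing arc as a word in outer labels and $\alpha$'s of the incoming arcs (this lets us eliminate that outer label generator), and one relation purely among the $\alpha$'s (a Wirtinger-type relation $\alpha_{\text{out}} = (\text{word in }\alpha\text{'s})^{-1}\alpha_{\text{in}}(\text{word in }\alpha\text{'s})$). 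Going component by component and arc by arc, the first type of relation lets us successively eliminate all but one outer-label generator on each component — leaving exactly $r$ outer-label generators $x_{(1)},\dots,x_{(r)}$, one per component — and after these eliminations those $r$ generators appear in no remaining relation at all. The remaining generators are the $\alpha_a$ together with the $r$ surviving $x_{(j)}$, and the remaining relations involve only the $\alpha_a$. Hence $h(L) \cong \langle x_{(1)},\dots,x_{(r)}\rangle * \langle \{\alpha_a\} \mid \{\text{Wirtinger-type relations}\}\rangle = F_r * \hat h(L)$, and $\hat h(L)$ is generated by the $\alpha_a$; a final count (or the observation that in $\hat h(L)$ each component contributes relations reducing the $\alpha$'s on its arcs, exactly as in the Wirtinger presentation of a knot group where arcs of one component are all conjugate) shows $\hat h(L)$ is generated by $r$ elements — one meridian-type element $\alpha$ per component.

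The key steps, in order: (1) fix an oriented blackboard-framed diagram and write down the presentation $\langle \mathcal A \mid \mathcal T\rangle$; (2) change coordinates on each doubled arc from $(x_a,y_a)$ to $(x_a,\alpha_a)$ by Tietze transformations; (3) check that each crossing relation splits into one ``outer-label transport'' relation and one ``$\alpha$-only'' relation — this is the computational heart and the place where one must be careful that the $\alpha$-only relation is genuinely a conjugation relation of the form $\alpha_{\text{out}} = g^{-1}\alpha_{\text{in}} g$; (4) use the transport relations to eliminate outer-label generators, tracking that exactly one survives per connected component and that the survivors become relation-free; (5) assemble the free product decomposition and bound the generating rank of $\hat h(L)$ by $r$ by the usual Wirtinger argument that all $\alpha$'s on one component are conjugate and hence expressible from a single one plus the surviving relations.

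The main obstacle I anticipate is step (3)/(5): verifying that after the change of coordinates the $\alpha$-sector of the presentation is exactly a ``Wirtinger-type'' presentation (so that $\hat h(L)$ needs only $r$ generators), rather than something with more generators than relations can reduce. The subtlety is that the outgoing ratio is a \emph{conjugate} of the incoming ratio by a word in the other labels, so one must confirm that those conjugating words can themselves be expressed purely in $\alpha$'s (using already-derived relations and the identity $y_a = x_a\alpha_a$ consistently), so that the $\alpha$-sector closes up on itself and does not drag outer labels back in. Once that closure is established, the free-product splitting and the rank bound follow by the same bookkeeping as for the Wirtinger presentation of link groups, and the relation to that presentation — to be spelled out in the subsection that follows — becomes transparent.
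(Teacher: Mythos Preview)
Your approach is essentially the same as the paper's: both pass to the generators $(x_a,\alpha_a)$ with $\alpha_a=x_a^{-1}y_a$, observe that the relations split into ``transport of $x$'' and ``$\alpha$-only'' parts, and then eliminate all but one $x$ per component. The paper carries this out by writing $L$ as a braid closure and proving by induction on the braid word that the bottom labels have the form $x_{\bar\sigma^{-1}(i)}u_i$, $y_{\bar\sigma^{-1}(i)}v_i$ with $u_i,v_i$ words in the $\alpha$'s; you instead work crossing-by-crossing on an arbitrary diagram, which is more direct and makes the Wirtinger connection (the paper's Section~\ref{sec:Wirt}) immediate.

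Your worry in step~(3) dissolves on inspection: with $\beta=u^{-1}v$ the outgoing ratio is $(xu^{-1}v)^{-1}(yu^{-1}v)=\beta^{-1}\alpha\beta$, so the conjugator is already the $\alpha$-variable of the over-arc, and the $\alpha$-sector closes up with no further work.

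There is, however, a genuine gap in your step~(5). The sentence ``by the usual Wirtinger argument that all $\alpha$'s on one component are conjugate and hence expressible from a single one'' is wrong: conjugacy does not give generation, and link groups are \emph{not} generated by $r$ meridians in general (the trefoil group is not cyclic). What you actually obtain for $\hat h(L)$ is the Wirtinger presentation of the link group together with one extra ``blackboard-longitude $=1$'' relation per component (from the transport closure), and there is no soft argument that this quotient is $r$-generated. Note that the paper's own proof does not establish the $r$-generation clause either: it ends with $\hat h(L)$ presented on the $\alpha_i$ indexed by braid strands, not by components. So your free-product decomposition $h(L)\cong F_r*\hat h(L)$ is fine, but the claim that $\hat h(L)$ needs only $r$ generators should be dropped or proved separately.
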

\begin{proof}
Every framed link arises as the closure $\hat{\sigma}$ of an $n$-braid $\sigma\in \mathbb B_n$,
where $\mathbb B_n$ denotes the $n$-string braid group, and blackboard framing is assumed with the braid diagrams and their closures. We take the closure to be performed with parallel arcs with blackboard framing and without small kinks. 
In particular, varying framings can be realized by Markov stabilizations.
Let $\sigma_i \in \mathbb B_n$, $i=1, \ldots, n-1$,  be the standard generators that represent positive crossings between $i^{\rm th}$ and $(i+1)^{\rm st}$ strings. Below we regard  framed braid diagrams as double stranded as before. 

Let  $(x_1,y_1)\times \cdots\times (x_n,y_n)$ be the generators assigned  at the  top strings of $\sigma \in \mathbb B_n$.
Let $\alpha_i := x_i^{-1}y_i$ as before, and denote
 $\hat{\sigma}$ the permutation corresponding to $\sigma$.
Then we claim that the labels at the bottom of $\sigma$ is of the form
$\times_{i =1}^{n} (z_i(\sigma), w_i(\sigma))$ written as  $z_i(\sigma) = x_{\hat {\sigma}^{-1}(i)} u_i (\sigma)$ and 
 $w_i(\sigma)= y_{\hat {\sigma}^{-1}(i)} v_i (\sigma) $, where 
 $ u_i (\sigma) $ and  $ v_i (\sigma)$ are words  in $\alpha_i$s and their inverses. 
To show this by induction, it is sufficient to verify this claim for $\sigma'=\sigma \sigma_j$ and 
$\sigma' = \sigma \sigma_j^{-1}$ under the assumption for $\sigma$. 
We have  $z_d(\sigma')=z_d(\sigma)$ and $w_d(\sigma')=w_d(\sigma)$
for $d \neq j, j+1$. 
For $d=j$ we have 
$z_j(\sigma')=z_{j+1}(\sigma)$ and $w_j(\sigma')=w_{j+1}(\sigma)$, 
and for 
$d=j+1$ we have 
\begin{eqnarray*}
 z_{j+1}(\sigma')&=&
z_j(\sigma) z_{j+1} (\sigma)^{-1} w_{j+1} (\sigma) \\
&=&  [  x_{\hat {\sigma}^{-1}(j)} u_j (\sigma) ] \ [  x_{\hat {\sigma}^{-1}(j+1) } u_{j+1} (\sigma)]^{-1}  \
[ y_{\hat {\sigma}^{-1}(j+1)} v_{j+1} (\sigma) ] =x_{\hat {\sigma}^{-1}(j)} u_{j+1} (\sigma')
\end{eqnarray*}
where  $\hat {\sigma}^{-1}(j+1) = \hat{\sigma}_{j} \hat {\sigma}^{-1}(j)$ 
	implies $ \hat {\sigma}^{-1}(j) =  \hat{\sigma}_{j}^{-1}\hat {\sigma}^{-1}(j+1) =
\hat {\sigma '}^{-1}(j+1)$ as desired, 
and 
$$ u_{j+1}(\sigma')=u_j (\sigma)  u_{j+1} (\sigma)^{-1}   { x_{\hat {\sigma}^{-1}(j+1) }}^{-1} 
y_{\hat {\sigma}^{-1}(j+1)} v_{j+1} (\sigma)$$ is indded a word in $\alpha_i$s.
A similar computation can be performed for $w_{j+1}(\sigma')$. 

Thus the relations are of the form
$(x_i, y_i)=( x_{\hat {\sigma}^{-1}(i)} u_i (\sigma),  y_{\hat {\sigma}^{-1}(i)} v_i (\sigma))$, $i=1, \ldots, n$, where $ u_i (\sigma) $ and $v_i (\sigma)$ are words in $\alpha_i$s. 
Let $(x_1, x_{\hat{\sigma} (1)},  x_{\hat{\sigma}^2 (1)}, \ldots, x_{\hat{\sigma}^k (1)} )$
be the orbit of $x_1$ under repeated application of $\sigma$, that constitute a component of $L$ after the closure of $\sigma$. Then we have a set of relations of the form
$x_{\hat{\sigma} (1)}= x_1 u_{(1)}$,  $x_{\hat{\sigma}^2 (1)}=x_{\hat{\sigma} (1)} u_{(2)}, \ldots, x_{\hat{\sigma}^k (1)} =  x_{\hat{\sigma}^{k-1} (1)} u_{(k-1)}$, $x_1= x_{\hat{\sigma}^{k} (1)} u_{(k)}$, where $u_{(j)}$ are words in $\alpha_i$s. Hence this set of relations gives  the generators $x_{\hat {\sigma}^j (1)}$  expressed by $x_1$ and words in $\alpha_i$s, and gives rise to a relation of the form 
$x_1=x_1 u_{(1)} \cdots u_{(k)} $. Similar calculations can be performed to other orbits and $y_i$s.
Recall that $y_i=x_i \alpha_i$ are used to express $y_i$ by means of $x_i$ and $\alpha_i$. 
Hence the fundamental heap has a presentation
$$h(L)=\langle \ x_{j(1)}, \ldots x_{j(r)}, \alpha_i \mid 
R_i, \ i=1, \ldots, r \ \rangle , $$
where  $x_{j(1)}, \ldots,  x_{j(r)}$ are representatives of orbits, and $R_i$ are relations among $\alpha_i$s. Hence $h(L)\cong F_r * \hat{h}(L)$, where $r$ is the number of components, and $\hat{h}$ is the group presented by generators $\alpha_i$ and relations among them.
\end{proof}

\subsection{Relations to Vinberg and other groups}

We relate the fundamental heap to Vinberg and, in special cases, to Coxeter groups for the examples computed in Section~\ref{sec:hdef}.
Recall that a Vinberg group, i.e. {\it group defined by periodic paired relations} in Vinberg's original paper \cite{Vin}, is defined by $n$ generators $a_1, \ldots , a_n$ and relations $a_i^{k_i} = 1$, $w(a_i,a_j)^{k_{ij}} = 1$, where $k_i, k_{ij} \geq 2$ and $w(x_i,x_j)$ is a cyclically reduced word, which can be assumed to be of type $a_i^{t_1}a_j^{s_1} \cdots a_i^{t_d}a_j^{s_d}$ for positive inteders $0< t_q < k_i$ and $0< s_q < k_j$. We follow the convention that if $k_i, k_{ij} = \infty$, then no relation correponding to $a_i$ or $w(a_i,a_j)$ is imposed. Vinberg groups encompass two important classes fo groups, namely that of Coxeter groups and generalized triangule groups. Due to the role they play in Theorem~\ref{thm:Vin} below, we recall that a Coxeter group  has a presentation 
$ \langle \ a_i \ (i=1, \ldots, n ) \ | \ (a_i a_{j})^{m_{ij}} \ (1 \leq i, j \leq n) \ \rangle  $
where $m_{ii}=1$ and $m_{ij}\geq 2$, where we follow the previous convention on infinite exponents.

\begin{proposition}
Let $h(T(2,n)) = F_2*D_k^e$ be the fundamental heap of $T(2,n)$ for $n=2k$ and 
$  F_1*D_k^o $ for $n=2k+1$ as determined in Proposition~\ref{prop:toruslink}.
Then there exist group epimorphisms from $D_k^e$ and  $D_k^o$ to  Vinberg groups for all $k>0$ and $k\geq 3$, respectively.
\end{proposition}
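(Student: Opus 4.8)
The plan is to exhibit explicit quotients of $D_k^e$ and $D_k^o$ that are Vinberg groups, and to check that the natural maps are surjective (which is automatic since Vinberg generators will be images of $D_k$-generators). Recall $D_k^e = \langle\ \alpha,\beta \ \mid\ \alpha^k = \beta^k = (\alpha\beta)^k\ \rangle$. The element $\gamma := \alpha^k = \beta^k = (\alpha\beta)^k$ is central in $D_k^e$ (it is a $k$-th power of each of the three elements $\alpha$, $\beta$, $\alpha\beta$, which together generate the group, and each of these commutes with its own $k$-th power; one checks $\gamma$ commutes with $\alpha$ and with $\beta$ directly). So the plan is: first verify centrality of $\gamma$; then kill it, i.e. pass to the quotient $D_k^e/\langle\langle\gamma\rangle\rangle$. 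In that quotient the relations become $\alpha^k = \beta^k = (\alpha\beta)^k = 1$, which is exactly the presentation of the Vinberg group (indeed a triangle-type group) $\langle\ \alpha,\beta\ \mid\ \alpha^k = \beta^k = (\alpha\beta)^k = 1\ \rangle$, matching Vinberg's format with $w(\alpha,\beta) = \alpha\beta$ and all three exponents equal to $k$. Since the quotient map sends $\alpha\mapsto\alpha$, $\beta\mapsto\beta$, it is surjective, giving the epimorphism $D_k^e \twoheadrightarrow \langle\ \alpha,\beta\ \mid\ \alpha^k=\beta^k=(\alpha\beta)^k=1\ \rangle$ for all $k>0$. (For $k$ even one can push further: adding $\alpha^2=\beta^2=1$ as well collapses onto the dihedral Coxeter group of order $2k$, which is presumably what gives the Coxeter-group refinement mentioned in the introduction, but that is not needed here.)

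For the odd case, $D_k^o = \langle\ \alpha,\beta\ \mid\ \alpha^{-(k+1)}(\alpha\beta)^{k+1}\beta^{-k}(\alpha\beta)^k\ \rangle$, the plan is to find a substitution that turns the single relator into a periodic paired word. I would first simplify using a change of generators analogous to Example~\ref{ex:small}: set $\gamma := (\alpha\beta)$ (or some conjugate variant adapted to the relator), rewrite $\beta$ in terms of $\alpha$ and $\gamma$, and bring the relator to a cyclically reduced word in the two new generators. The target is to reach a one-relator group of the form $\langle\ a, b\ \mid\ w(a,b)\ \rangle$ where $w$ already has the alternating-power shape $a^{t_1}b^{s_1}\cdots a^{t_d}b^{s_d}$; then imposing finite orders $a^{k_1}=b^{k_2}=1$ (chosen compatibly with the exponents appearing, using $k\ge 3$ to guarantee the required inequalities $0<t_q<k_i$, $0<s_q<k_j$ can be met and the resulting group is a genuine Vinberg group rather than a degenerate one) produces a Vinberg-group quotient, and the generators of that quotient are images of $\alpha,\beta$, so the map is onto. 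The restriction $k\ge 3$ should enter precisely at the point of ensuring the chosen cyclic exponents satisfy Vinberg's constraints (and that the word $w$ is not trivial or a proper power in a way that breaks the paired-relation form).

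The main obstacle I expect is the odd case: finding the right change of variables so that the relator $\alpha^{-(k+1)}(\alpha\beta)^{k+1}\beta^{-k}(\alpha\beta)^k$ becomes a cyclically reduced periodic paired word in two generators, and then choosing the finite orders to impose so that the quotient is honestly a Vinberg group with the parameters satisfying $k_i,k_{ij}\ge 2$ and $0<t_q<k_i$, $0<s_q<k_j$. This is essentially a careful combinatorial-group-theory computation — rewriting, Tietze transformations, tracking exponents — rather than anything conceptually deep, but it is where the value $k\ge 3$ is forced and where an incautious substitution can collapse the group too far. The even case, by contrast, I expect to be short once centrality of $\gamma=\alpha^k$ is recorded.
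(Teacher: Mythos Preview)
Your even case is correct and lands on the same target as the paper: both quotient $D_k^e$ onto the triangle group $\langle x,y\mid x^k=y^k=(xy)^k=1\rangle$. Your centrality discussion is harmless but unnecessary --- to get the epimorphism you only need that the defining relations $\alpha^k=\beta^k=(\alpha\beta)^k$ of $D_k^e$ hold in the target (they all equal $1$ there), so $\alpha\mapsto x$, $\beta\mapsto y$ is well-defined and onto; no Tietze moves or centrality are required.

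Your odd case has a genuine gap. You are searching for a change of generators to massage the relator into periodic-paired shape, but this is not how the paper proceeds and you have not identified a concrete target. The paper simply writes down the Vinberg group
\[
V(k)=\langle x,y\mid x^k=y^{\,k-1}=(xy)^k=1\rangle
\]
and sends $\alpha\mapsto x$, $\beta\mapsto y$. The verification that the relator of $D_k^o$ dies is then a two-line reduction: using $x^k=(xy)^k=1$ and $y^{k-1}=1$ one gets
\[
x^{-(k+1)}(xy)^{k+1}y^{-k}(xy)^k \;=\; x^{-1}\cdot(xy)\cdot y^{-1}\cdot 1 \;=\; 1.
\]
The key point you are missing is the \emph{asymmetric} choice of exponent $k-1$ for $y$; without it the relator does not collapse. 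This asymmetry is also exactly what forces the hypothesis $k\ge 3$: Vinberg's definition requires each exponent to be at least $2$, and here the smallest is $k-1$. Your plan of ``imposing finite orders compatibly'' is in the right spirit but does not locate this choice, and your proposed substitution $\gamma=\alpha\beta$ (as in the trefoil example) leads in a different direction that does not obviously produce a Vinberg presentation for general $k$.
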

\begin{proof}
	For $n=2k+1$ and $k\geq 3$, 
	consider the Vinberg group with presentation $V(k):= \langle \ x,y \mid x^k = y^{k-1} = (xy)^k =1\ \rangle$. 
	Then mapping $f$ defined by $\alpha \mapsto x$, $\beta \mapsto y$ we obtain a map from the free group $F_2$ on two elements $\alpha, \beta$  that descends to a well defined homomorphism on $D^o_k$:
			We  verify that 
			the relator of $D^o_k$ maps to the identity in $V(k)$
		for $k\geq 3$:
$f(\alpha^{-(k+1)} (\alpha\beta)^{k+1} \beta^{-k} (\alpha\beta)^k) 
=  x^{-k-1} (xy)^{k+1} y^{-k} (xy)^k 
=  x^{-1} (xy) y^{-1} 
= 1	$ as desired.
	
		Similarly the even case  in Proposition~\ref{prop:toruslink},
$D^e_{k} $ surjects to another Vinberg group 
$\langle \ x,y \ | \ x^k =y^k=(xy)^k=1  \ \rangle$.
\end{proof}

We note that it is known that for the odd case $n=2k+1$, $V(k)$ is infinite. 
For the even case, the group  $D(k,l,m) := \langle \ x,y \ | \ x^k =y^l=(xy)^m=1  \ \rangle$  is also called   an {\it ordinary triangle group}, or 
 a von Dyck group, and 
 it is known that $D(k,l,m)$ is non-abelian and  infinite if $\frac{1}{k} + \frac{1}{l} + \frac{1}{m} \leq 1$, see the discussion preceding Theorem~2 in \cite{Vin}.

For pretzel  links we have the following.

\begin{theorem}\label{thm:Vin}
Let $h(P(2k_1, \ldots, 2k_r))= F_r * G_r$ be the fundamental heap of pretzel link 
as in Proposition~\ref{thm:pretzelnoframe}. 
Then if $|-k_i + k_{i+1}| \neq 1$ for all $i = 1,\dots , r$, there exists a group epimorphism from 
$G_r$ 
to a Vinberg group. If, moreover $k_i$ ($i=1, \ldots, r$) are either all odd or all even, we can specialize the Vinberg group to be Coxeter.
\end{theorem}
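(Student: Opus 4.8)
The plan is to construct, for each pretzel link $P(2k_1,\dots,2k_r)$, an explicit Vinberg group $V$ together with a surjective homomorphism $G_r \twoheadrightarrow V$, by sending the generators $\alpha_i$ to suitable generators of $V$ and checking that the relators $\Theta_i$ are killed. Recall from Proposition~\ref{thm:pretzelnoframe} that $G_r$ is presented by generators $\alpha_1,\dots,\alpha_r$ and relators
$$\Theta_i=  \alpha_{i+1}^{-k_i+k_{i+1} }  (\alpha_i \alpha_{i+1}^{-1} )^{k_i}     (\alpha_{i+1} \alpha_{i+2}^{-1} )^{- k_{i+1}},$$
with subscripts modulo $r$. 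The natural change of variables is to introduce $\gamma_i := \alpha_i\alpha_{i+1}^{-1}$; then $\Theta_i$ becomes a word of the form $\alpha_{i+1}^{-k_i+k_{i+1}}\gamma_i^{k_i}\gamma_{i+1}^{-k_{i+1}}$. First I would pick the target to be the Vinberg (resp.\ Coxeter) group on generators $a_1,\dots,a_r$ with $a_i^{m_i}=1$ (a power to be chosen) and one "periodic paired relation'' per cyclic adjacent pair, designed precisely so that the image of $\Theta_i$ is trivial. The map $\alpha_i\mapsto$ (a word in the $a_j$'s) should be chosen so that $\gamma_i=\alpha_i\alpha_{i+1}^{-1}$ maps to something like $a_i$ or $a_i a_{i+1}$, turning $\gamma_i^{k_i}$ and $\gamma_{i+1}^{-k_{i+1}}$ into the relators of $V$.

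The key steps, in order: (1) rewrite the relators $\Theta_i$ in the $\gamma_i$-variables and identify the combinatorial shape that must vanish in $V$; (2) define $V$ as the Vinberg group whose defining relations are exactly $a_i^{k_i}=1$ together with $(a_i a_{i+1})^{?}=1$-type relations calibrated so that the hypothesis $|-k_i+k_{i+1}|\neq 1$ guarantees the $\alpha_{i+1}^{-k_i+k_{i+1}}$ prefix is a power of one of the $a_j$'s lying among the imposed relations (this is where that numerical hypothesis is used — it ensures the exponent $|-k_i+k_{i+1}|$ is either $0$, giving no prefix, or $\geq 2$, hence a legitimate Vinberg exponent, never the forbidden value $1$ which would force a free generator to be trivial); (3) define the homomorphism on generators, verify each $\Theta_i\mapsto 1$ by direct substitution, and check surjectivity, which is automatic since the $a_i$ (or simple combinations) lie in the image; (4) in the special case where all $k_i$ are simultaneously odd or simultaneously even, observe that the paired relations can be taken of Coxeter type $(a_ia_j)^{m_{ij}}=1$ — the parity condition is exactly what lets one absorb signs so that $\gamma_i^{k_i}$ becomes $(a_ia_{i+1})^{k_i}$ with no leftover odd-order twist, hence all exponents are even and the presentation is a Coxeter presentation after possibly rescaling. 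I would model step (2)–(3) closely on the torus-link computation in the proposition just above Theorem~\ref{thm:Vin}, since $P(2k,2k')$-type relators are structurally the same as the $D_k^o$, $D_k^e$ relators; the cocycle-style bookkeeping there ($f(\alpha^{-(k+1)}(\alpha\beta)^{k+1}\beta^{-k}(\alpha\beta)^k)=1$) is the exact analogue.

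The main obstacle will be step (2): choosing the Vinberg group and the substitution so that \emph{all} $r$ relators $\Theta_i$ vanish simultaneously, rather than one at a time, because the relators are coupled cyclically (each $\Theta_i$ involves $\alpha_i,\alpha_{i+1},\alpha_{i+2}$). One must ensure the substitution is globally consistent around the cycle — in particular that the product of the "discrepancies'' closes up, which is where the modular indexing $\alpha_{r+1}=\alpha_1$ must be handled with care. I expect that the cleanest route is to send $\alpha_i$ to a word that makes $\alpha_i\alpha_{i+1}^{-1}\mapsto a_i$ (for instance $\alpha_i\mapsto a_i a_{i-1}\cdots a_1$ read cyclically, or a telescoping product), so that the three-term coupling collapses to a two-term relation in $V$; verifying that this is well-defined around the full cycle of length $r$ is the delicate point. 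A secondary, more routine obstacle is bookkeeping the sign/parity in step (4): one must track that $(-k_i+k_{i+1})$ and the exponents $k_i,k_{i+1}$ have compatible parities so that the twisted Vinberg relation genuinely degenerates to an untwisted Coxeter relation, and confirm the resulting $m_{ij}$ satisfy $m_{ij}\geq 2$ so the Coxeter presentation is legitimate (again guaranteed by $|-k_i+k_{i+1}|\neq 1$ together with the parity hypothesis).
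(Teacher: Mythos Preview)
Your high-level plan is correct and matches the paper's: define a Vinberg group $V$ on generators in bijection with the $\alpha_i$, send generators to generators, and verify each relator $\Theta_i$ dies in $V$. You also correctly locate where the hypothesis $|-k_i+k_{i+1}|\neq 1$ enters (legitimacy of the Vinberg exponents) and where the parity hypothesis enters (collapse to Coxeter form).

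Where you diverge from the paper, and create an unnecessary obstacle for yourself, is the substitution. You propose to pass through $\gamma_i=\alpha_i\alpha_{i+1}^{-1}$ and then hunt for a telescoping assignment $\alpha_i\mapsto a_i a_{i-1}\cdots a_1$ making $\gamma_i\mapsto a_i$. As you correctly worry, this forces a global cyclic constraint $a_1a_2\cdots a_r=1$ in the target, which is \emph{not} a Vinberg-type relation and would spoil the conclusion. The paper avoids this entirely by using the most naive map: $f(\alpha_i)=a_i$. The Vinberg group is then chosen so that each of the three factors of $\Theta_i$ dies separately under $f$. Concretely, impose $a_i^{\,|{-}k_{i-1}+k_i|}=1$ (with the convention $a_i^{\infty}=1$ when the exponent is $0$) and the paired relation $(a_i\,a_{i+1}^{\,|{-}k_i+k_{i+1}|-1})^{k_i}=1$. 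Then the prefix $a_{i+1}^{-k_i+k_{i+1}}$ vanishes by the order relation on $a_{i+1}$; the same order relation rewrites $a_{i+1}^{-1}=a_{i+1}^{\,|{-}k_i+k_{i+1}|-1}$, so $(a_ia_{i+1}^{-1})^{k_i}$ becomes exactly the paired relator; and the third factor is the paired relator for index $i{+}1$. No cyclic bookkeeping is needed because $f$ is defined generator-by-generator.

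For the Coxeter specialization the paper again uses $\alpha_i\mapsto a_i$, now into the Coxeter group with $m_{i,i+1}=k_i$ and $m_{ij}=\infty$ otherwise. The parity hypothesis makes $-k_i+k_{i+1}$ even, so the prefix $a_{i+1}^{-k_i+k_{i+1}}$ is trivial from $a_{i+1}^2=1$; and $a_{i+1}^{-1}=a_{i+1}$ turns $(a_i a_{i+1}^{-1})^{k_i}$ into the Coxeter relator $(a_ia_{i+1})^{k_i}$. So your intuition about ``absorbing signs'' is right, but the mechanism is simply $a_i^2=1$ applied to an even exponent, not a rescaling of the $m_{ij}$.
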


\begin{proof}
We apply the even case of Proposition~\ref{thm:pretzelnoframe}. 
Let us consider the Vinberg group $V:= V(P(2k_1, \ldots, 2k_r))$ with generators $a_1, \dots , a_r$ and relations $a_i^{ |-k_{i-1}+k_i|} = 1$ when $|-k_{i-1}+k_i| \neq 0$, $a_i^{\infty} = 1$ when $|-k_{i-1}+k_i| = 0$, and $(a_ia_{i+1}^{|-k_i+k_{i+1}|-1})^{k_i}=1$. 
The exponents $ |-k_i+k_{i+1}|-1$ in the latter are for  making them positive.
Now define the map $f$ on generators 
$\alpha_i \mapsto a_i$ for all $i = 1,\dots , r$. We  verify that $f$ maps the relators $\Theta_i$ to the identity in $V$. For $i$ such that $|-k_{i-1}+k_i| \neq 0$ we have 
\begin{eqnarray*}
f(\Theta_i) &=& f(\alpha_{i+1})^{|-k_i+k_{i+1}|}(f(\alpha_i)f(\alpha_{i+1})^{-1})^{k_i}(f(\alpha_{i+1})f(\alpha_{i+2})^{-1})^{-k_{i+1}}\\
 &=& a_{i+1}^{-k_i+k_{i+1}}(a_ia_{i+1}^{|-k_i+k_{i+1}|-1})^{k_i}(a_{i+1}a_{i+2}^{|-k_{i+1}+k_{i+2}|-1})^{-k_{i+1}} \quad = \quad 1. 
\end{eqnarray*}
If $i$ is such that $|-k_{i-1}+k_i| = 0$ we have
\begin{eqnarray*}
	f(\Theta_i) &=& (f(\alpha_i)f(\alpha_{i+1})^{-1})^{k_i}(f(\alpha_{i+1})f(\alpha_{i+2})^{-1})^{-k_{i+1}}\\
	&=& (a_ia_{i+1}^{-1})^{k_i}(a_{i+1}a_{i+2}^{-1})^{-k_{i+1}}  \quad = \quad 1 .	\end{eqnarray*}
Suppose now $k_i$ ($i=1, \ldots, r$) are either all odd or all even. Set $m_{ij}=k_i$ if $j=i+1$ and $m_{ij}=\infty$ otherwise.
Let $$f: F(\alpha_i \ (i=1, \ldots, r) ) \rightarrow \langle \ a_i \ (i=1, \ldots, n ) \ | \ (a_i a_{j})^{m_{ij}} \ (1 \leq i, j \leq n) \ \rangle $$
be the epimorphism from the free group to the Coxeter group. 
Then it is sufficient to show that the relations $\Theta_i$, $i=1, \ldots, r$, hold in the image.
Since  $k_i$s are all even or all odd, we have 
 $f(\alpha_{i+1} )^{-k_{i}+k_{i+1}} = f(\alpha_i)^{2 \ell_i }=a_i^{2 \ell_i} =1$, where $\ell_i$ is some integer.
Since $f(\alpha_i)^2=a_i^2=1$, we have 
$f(\alpha_i^{-1})=f(\alpha_i)$, and
$f( (\alpha_i \alpha_{i+1}^{-1})^{k_i})= f( (\alpha_i \alpha_{i+1})^{k_i})  =1$. 
Hence $\Theta_i $ is sent to the identity  in the image.
\end{proof}

\begin{corollary}
	If $\sum_{i=1}^r \frac{1}{k_i} \leq r-1$, then the group $G_r$ is infinite.
\end{corollary}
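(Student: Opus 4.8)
The plan is to deduce the corollary from Theorem~\ref{thm:Vin} together with Vinberg's infiniteness theorem for groups defined by periodic paired relations~\cite{Vin}. Working under the hypothesis $|-k_i+k_{i+1}|\neq 1$ ($i=1,\dots,r$) of Theorem~\ref{thm:Vin}, that theorem furnishes an epimorphism $G_r\twoheadrightarrow V$, where $V=V(P(2k_1,\dots,2k_r))$ is the Vinberg group on generators $a_1,\dots,a_r$ with vertex relations $a_i^{\nu_i}=1$, $\nu_i:=|-k_{i-1}+k_i|$ (with no relation imposed when $\nu_i=0$), and paired relations $(a_ia_{i+1}^{\nu_{i+1}-1})^{k_i}=1$, indices taken modulo $r$. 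Since any group surjecting onto an infinite group is itself infinite, it suffices to show that $V$ is infinite whenever $\sum_{i=1}^r\frac{1}{k_i}\le r-1$.

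First I would put $V$ into a cleaner shape: because $\nu_{i+1}-1\equiv-1\pmod{\nu_{i+1}}$ and $a_{i+1}^{\nu_{i+1}}=1$ (the exponent being literally $-1$ when $\nu_{i+1}=0$), each paired relation reduces to $(a_ia_{i+1}^{-1})^{k_i}=1$, so
$$V=\braket{\,a_1,\dots,a_r\ \ |\ \ a_i^{\nu_i}=1\ (\nu_i\neq0),\quad (a_ia_{i+1}^{-1})^{k_i}=1,\quad i=1,\dots,r\,},$$
with every $\nu_i$ lying in $\{0\}\cup\{2,3,\dots\}$ by hypothesis. In this form $V$ is the Vinberg group of the $r$-cycle carrying vertex orders $\nu_i$ and edge orders $k_i$, and the problem becomes the arithmetic one of checking that $\sum_i\frac{1}{k_i}\le r-1$, equivalently $\sum_i\bigl(1-\frac{1}{k_i}\bigr)\ge 1$, is precisely the hypothesis under which the infiniteness criterion of~\cite{Vin} guarantees $V$ to be infinite (with the convention $\frac{1}{\infty}=0$ at the vertices of infinite order). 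Two elementary observations serve as sanity checks and cover the extremal cases transparently: if all the $k_i$ coincide, then every $\nu_i=0$ and the augmentation $a_i\mapsto 1$ gives $V\twoheadrightarrow\Z$; and when the $k_i$ are all $\ge 2$ of one parity and $r$ is large, the Coxeter specialization in Theorem~\ref{thm:Vin} already exhibits $G_r$ surjecting onto a Coxeter group whose diagram carries an $\infty$-labelled bond, hence onto a group containing an infinite dihedral subgroup.

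I expect the main obstacle to be precisely this arithmetic bookkeeping, rather than anything structurally deep: one must pass uniformly from the cyclic data $(\nu_i;k_i)$ of $P(2k_1,\dots,2k_r)$ to the numerical hypothesis of~\cite{Vin}, treating the vertices with $\nu_i=0$ (infinite-order generators, which behave like cusps and tend to force infiniteness by themselves) simultaneously with those with $\nu_i\ge 2$, and one must be careful in the situations where the bound $\sum_i\frac{1}{k_i}\le r-1$ is a genuine constraint rather than automatic — notably those with some $k_i=1$, so that the corresponding $\alpha_i$ and $a_i$ collapse onto neighbouring generators, the inequality bounding how much of this collapsing may occur. The cleanest write-up is, I anticipate, to quote the relevant inequality of~\cite{Vin} verbatim, verify term by term that the data $(\nu_i;k_i)$ meets it exactly when $\sum_i\frac{1}{k_i}\le r-1$, dispatch the extremal cases via the two remarks above, and finally conclude from the epimorphism $G_r\twoheadrightarrow V$ that $G_r$ is infinite.
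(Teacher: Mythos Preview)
Your approach is essentially identical to the paper's: use the epimorphism $G_r\twoheadrightarrow V(P(2k_1,\dots,2k_r))$ from Theorem~\ref{thm:Vin} and then invoke Vinberg's infiniteness criterion from~\cite{Vin} to conclude $V$ is infinite under the stated inequality. The paper's proof is two sentences long --- it simply cites the Corollary to Theorem~1 and Theorem~2 of~\cite{Vin} for the bound $\sum_i 1/k_i\le r-1$ and then applies the epimorphism --- so your elaborations (the rewriting of the paired relations as $(a_ia_{i+1}^{-1})^{k_i}=1$, the sanity checks via the augmentation to $\Z$ when all $k_i$ coincide and via the Coxeter specialization) are additional scaffolding around the same argument rather than a different route. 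You are right to flag that the hypothesis $|-k_i+k_{i+1}|\neq 1$ of Theorem~\ref{thm:Vin} is being inherited; the paper leaves this implicit since the corollary sits directly under that theorem.
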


\begin{proof}
Applying Corollary to Theorem~1 and Theorem~2 in \cite{Vin} it follows that $V(P(2k_1, \ldots, 2k_r))$ is infinite if $\sum_{i=1}^r \frac{1}{k_i} \leq r -1$. Using Theorem~\ref{thm:Vin} we have that $G_r$ maps onto $V(P(2k_1, \ldots, 2k_r))$, which completes the proof. 
\end{proof}

\begin{remark}
{\rm 
We note that similar epimorphisms exist 
in the case of pretzel links with 
different 
framings.
Recall first, that pretzel links $P(2k_1,\ldots , 2k_r)$ with all even crossings have $r$ components and, therefore, upon rearranging the twists we can position the framings on the left (pair of) arcs above each sequence of $2k_i$ crossings. We denote the sequence of framings $\bar m = (m_1, \ldots , m_r)$ and indicate the pretzel link with framings $\bar m$ by $P^{\bar m}(2k_1, \ldots , 2k_r)$.

We use Lemma~\ref{lem:telecord} to insert additional framings and Lemma~\ref{lem:torus2n} to insert crossings. The bottom colorings after adding twists then become
$$
(y_i\alpha_i^{m_i-1-k_i}(\alpha_i\alpha_{i+1})^{k_i}, y_i\alpha_i^{m_i-k_i}(\alpha_i\alpha_{i+1})^{k_i})\times (y_{i+1}\alpha_{i+1}^{-k_i}(\alpha_i\alpha_{i+1})^{k_i}, x_{i+1}\alpha_{i+1}^{-k_i}(\alpha_i\alpha_{i+1})^{k_i}).
$$	
We obtain the same relation $\alpha_i^{k_{i+1}-k_1-m_i}(\alpha_i\alpha_{i+1})^{k_i} = (\alpha_{i+1}\alpha_{i+2})^{k_{i+1}}$, 
hence 
$h(P^{\bar m}(2k_1, \ldots , 2k_r)) = F_r* \bar{G}_r$
where $
\bar{G}_r = \langle\  \alpha_i \mid \ \Theta_i, \ i = 1,\ldots, r \ \rangle ,
	$
	with $\Theta_i = \alpha_{i+1}^{k_{i+1}-k_i-m_i} (\alpha_i\alpha^{-1}_{i+1})^{k_i} (\alpha_{i+1}\alpha_{i+2})^{-k_{i+1}}$. 

	As in Theorem~\ref{thm:Vin} define the Vinberg group by presentation 
	$$\langle \ a_i \mid \ a_i^{k_{i+1}-k_i-m_i} = ( a_ia_{i+1} )^{k_{i+2}-k_{i+1}-m_{i+1}-1} = 1,\ i = 1,\ldots , r\  \rangle , $$
	then there is an epimorphism from $\bar{G}_r$ to this Vinberg group.

}\end{remark}

\begin{figure}[htb]
\begin{center}
\includegraphics[width=1in]{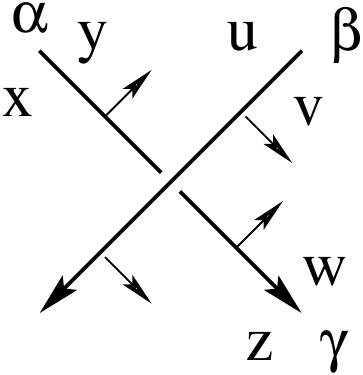}
\end{center}
\caption{Wirtinger presentation}
\label{Wirtinger}
\end{figure}

\subsection{Relations to Wirtinger presentations}\label{sec:Wirt}

We   describe a relation between the fundamental heap of framed links and the Wirtinger presentation of knot (link) groups.
Let $\vec{L}$ be an oriented link, and let $L$ denote the link obtained 
by forgetting the orientation of $\vec{L}$.

Let $\pi(\vec{L})$ denote the link group. 
Then we define a group homomorphism $\lambda: \pi(\vec{L}) \rightarrow h(L)$ as follows,
using Wirtinger presentations of $\pi(\vec{L})$.
Let $D$ be an oriented diagram of $\vec{L}$, with the same notation $D$ as a diagram of $L$ when orientations are forgotten.

First consider the case when the orientations of both arcs in Figure~\ref{crossing} (A) point down.
A single stranded version is depicted in Figure~\ref{Wirtinger} with orientations and orientation normal vectors defined via  the right-hand rule.
Let $\alpha, \beta $ and $\gamma$ be Wirtinger generators assigned to the arcs in the figure, where 
pairs of generators $(x, y)$, $(u,v)$, and $(z, w)$, respectively, are assigned to corresponding double arcs in the figure.
Then the group homomorphism is defined by the assignment of generators $\lambda ( \alpha )=x^{-1} y$,
$\lambda ( \beta)=u^{-1} v $ and $\lambda ( \gamma)=z^{-1}w$, respectively. 
If the orientation of the over-arc labeled by $\beta$ is reversed, then the assignment changes 
to $\lambda (\beta)=v^{-1} u $. Compare with Definition~\ref{def:fund}.

\begin{proposition}\label{prop:Wirt}
The homomorphism 
$\lambda: \pi(\vec{L}) \rightarrow h(L)$ defined above by Wirtinger presentation,
is indeed well-defined.
\end{proposition}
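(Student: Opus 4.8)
The plan is to verify that $\lambda$ respects the Wirtinger relation at every crossing of $D$, since a homomorphism out of a group given by generators and relations is well-defined precisely when each defining relator maps to the identity. First I would fix a positive crossing of $D$ with the over-arc oriented consistently, and recall the standard Wirtinger relation there, say $\gamma = \beta \alpha \beta^{-1}$ (the exact form depending on the sign of the crossing and the chosen orientation conventions in Figure~\ref{Wirtinger}). Applying $\lambda$, the claim reduces to checking an identity in $h(L)$ among the group elements $x^{-1}y$, $u^{-1}v$, $z^{-1}w$ attached to the three pairs of doubled arcs. Here the defining relations of $h(L)$ from Definition~\ref{def:fund} enter: at that crossing one has $z = x u^{-1} v$ and $w = y u^{-1} v$. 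Substituting these, one computes $\lambda(\gamma) = z^{-1} w = (x u^{-1} v)^{-1}(y u^{-1} v) = v^{-1} u x^{-1} y u^{-1} v = (u^{-1}v)^{-1}(x^{-1}y)(u^{-1}v) = \lambda(\beta)^{-1}\lambda(\alpha)\lambda(\beta)$, which matches the image of the Wirtinger relation after accounting for the orientation bookkeeping. So the core of the proof is this one-line conjugation computation, carried out at a generic crossing.

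The remaining work is entirely bookkeeping over cases. I would enumerate the possibilities for the orientations of the two strands at a crossing (over-arc up or down, under-arc up or down) and the crossing sign, and in each case record both the Wirtinger relator and which of the two heap relations $z = xu^{-1}v$, $w = yu^{-1}v$ (or their reverses, as discussed after Definition~\ref{def:fund}, $x = z v^{-1}u$ etc.) applies, together with the orientation-dependent assignment $\lambda(\beta) = u^{-1}v$ versus $\lambda(\beta) = v^{-1}u$ noted just before the proposition. In every case the same substitution collapses the image of the Wirtinger relator to a conjugation identity that already holds in the free group on the $x,y,u,v,z,w$ modulo the heap relations, hence in $h(L)$. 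Because reversing an orientation simultaneously inverts the relevant $\lambda$-value and swaps the roles of the two endpoints of an under-arc, the cases are genuinely equivalent and it suffices to treat one representative carefully and remark that the others follow by the same manipulation.

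The main obstacle—such as it is—is not mathematical depth but getting the conventions to line up: the sign conventions for Wirtinger generators (which arc conjugates which), the right-hand-rule normals of Figure~\ref{Wirtinger}, and the left-to-right ordering $(u,v)$ of the over-arcs that Definition~\ref{def:fund} uses to write $z = xu^{-1}v$ must all be matched so that the conjugation comes out on the correct side with the correct exponent sign. I would therefore be explicit about one fixed convention at the start of the proof, do the computation once in that convention, and then dispatch the orientation-reversed and opposite-sign cases by the symmetry remarks above rather than redoing the algebra. No further ingredients are needed: the well-definedness of $h(L)$ itself is already established, so $\lambda$ is automatically a homomorphism once the relators are checked.
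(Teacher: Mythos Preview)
Your proposal is correct and follows essentially the same approach as the paper: check that $\lambda$ sends the Wirtinger relator at each crossing to the identity in $h(L)$ by the one-line conjugation computation $z^{-1}w = (xu^{-1}v)^{-1}(yu^{-1}v) = (u^{-1}v)^{-1}(x^{-1}y)(u^{-1}v)$, then dispatch the remaining orientation/sign cases by the same manipulation using the adjusted assignment $\lambda(\beta)=v^{-1}u$ where appropriate. The paper carries out exactly this computation for two representative cases and remarks that the others are similar, just as you outline.
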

\begin{proof}
This is checked by computing the Wirtinger relation.
When the orientation of the both arcs in Figure~\ref{crossing} (A) point downwards, then 
one computes
$$\lambda( \beta^{-1} \alpha \beta )= (u^{-1} v)^{-1} (x^{-1} y) (u^{-1} v)
= (x u^{-1} v)^{-1} (  y u^{-1} v)=z^{-1} w=\lambda(\gamma).$$
When the orientation of the over-arc is reversed, then using the changed assignment 
$\lambda(\beta)=v^{-1} u$, and the Wirtinger relation for a negative crossing is checked by 
$$
\lambda( \beta \alpha  \beta^{-1}  )= (v^{-1} u) (x^{-1} y) (v^{-1} u)^{-1}
= (x u^{-1} v)^{-1} (  y u^{-1} v)=z^{-1} w=\lambda(\gamma)
$$
as well. 
The other cases follow similarly.
\end{proof}

\begin{remark}
{\rm

From the proof of Theorem~\ref{thm:Wirt}, when the fundamental heap is written as 
$h(L)=F_r * \hat{h}(L)$ for a framed link $L$, we find that $\lambda$ 
surjects to
 $\hat{h}(L)$.
In fact, $\hat{h}(L)$ is generated by $\alpha_i$'s, and these are the images of generators of the Wirtinger presentation of $L$ under $\lambda$. 
}
\end{remark}

\begin{example}
{\rm
For those in Example~\ref{ex:small}, we observe the images of $\lambda$.
 In the case of $n=2$ (the Hopf link),
 $D^e_1$ is trivial, and we have $h(T(2,2))=F_2$, so that ${\rm Im}(\lambda)=1$. 
 For $n=4$, $D^e_2=\langle \ \alpha , \beta  \ | \ \alpha^2 =(\alpha \beta)^2 = \beta^{2}  \ \rangle$ is isomorphic to $\Z_2 \times \Z_2$, and  $h(T(2,4))=F_2 * (\Z_2 \times \Z_2 )$,
 and ${\rm Im}(\lambda)=\Z_2 \times \Z_2 $, so that $\lambda$ factors through the abelianization $\Z \times \Z$ and each factor of $\Z$ surjects to $\Z_2$ from the presentation involving $\alpha$ and $\beta$. 

For $n=3$, for $D^o_1=\langle \ \alpha, \beta \ | \ \alpha^{-2} (\alpha \beta)^{2}  \beta^{-1} (\alpha\beta)  \ \rangle$, 
we set $ \gamma=\alpha\beta \alpha$, and obtained  $D^o_1=\Z (=\langle\  \alpha , \gamma \ | \ \gamma^2 \alpha \ \rangle)$. 
The generator of $\Z$ is $\alpha$ which corresponds to a meridian, so we find that $\lambda$ is abelianization. 

}
\end{example}

\section{Colorings and cocycle invariants of framed links by heaps}\label{sec:color}

A coloring of a framed link diagram by a heap  is defined by assigning elements of the heap to these double arcs as follows, in a manner similar to quandle coloring, and cocycle invariants are also similarly defined as in ~\cite{CJKLS}. In this section we give such definitions, examples, and applications to the rank of heap TSD cohomology.

\subsection{Colorings} 

First we define and examine colorings of framed link diagrams by heaps.

\begin{definition}
{\rm
Let $X$ be a heap. 
Let $D$ be an unoriented framed link ribbon diagram and ${\cal A}$ the set of doubled arcs.
A {\it coloring } of $D$ by $X$ is a map ${\cal C}: {\cal A} \rightarrow X$ that satisfies the {\it coloring condition}
as depicted in Figure~\ref{crossing}  (B), where $(z,w) = (xu^{-1}v, y u^{-1} v )$.
}
\end{definition}

From the definition we obtain the following by checking the moves.

\begin{lemma}\label{lem:color}
The sets of colorings of  two framed link diagrams 
are in bijection between Reidemeister type II, III moves and the cancelation move.
\end{lemma}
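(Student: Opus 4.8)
The statement asserts that the number of $X$-colorings of a framed link diagram is unchanged under Reidemeister II and III moves and under the cancellation move of Figure~\ref{cancel} — these being exactly the moves (together with planar isotopy) that generate equivalence of blackboard-framed diagrams, as already invoked in the proof of well-definedness of $h(L)$. The plan is to establish, for each of the three local moves, an explicit bijection between the set of colorings of the ``before'' tangle and the set of colorings of the ``after'' tangle that agrees on the colors of the boundary arcs; since a coloring of the whole diagram is determined by a boundary-compatible coloring on a neighborhood of the local picture together with a coloring of the complement (which is identical on both sides), such local bijections glue to a global bijection. The key observation making all three cases routine is that the coloring rule at a crossing, $(z,w) = (xu^{-1}v,\, yu^{-1}v)$, expresses the two outgoing doubled arcs as functions of the incoming ones via the group heap operation $[\,\cdot\,,\,\cdot\,,\,\cdot\,]$, and that this map is a bijection in the relevant variables: given the over-pair $(u,v)$, the assignment $(x,x')\mapsto (xu^{-1}v, x'u^{-1}v)$ is invertible (inverse $(z,w)\mapsto(zv^{-1}u, wv^{-1}u)$), which is precisely the content of the direction-independence already checked in the proof that $h(L)$ is well-defined.

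First I would handle the cancellation move: here the local tangle on one side has two adjacent opposite crossings and on the other side is a trivial pair of parallel strands. Tracing the colors exactly as in the well-definedness lemma, the middle arcs get colored $[x,y,x]=xy^{-1}x$ and $x$ from the left, and $x'y'^{-1}x'$ and $x'$ from the right, forcing $x=x'$ and $y=y'$; thus the colors on the four boundary arcs on the cancellation side are an arbitrary pair, and this matches the trivial side freely, giving the bijection. Next, Reidemeister II: one side has a positive and a negative crossing stacked, the other side is two parallel strands; propagating the coloring rule through the two crossings and using the invertibility of $(x,x')\mapsto(xu^{-1}v,x'u^{-1}v)$ shows the composite is the identity on the relevant arc-colors, so again the boundary colorings correspond bijectively. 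Finally, Reidemeister III: both sides carry three strands and three crossings; this is the computation already indicated in Figure~\ref{heaptypeIII}, where pushing a strand past a crossing produces, for the doubled arcs, exactly the two sides of the ternary self-distributivity identity $T(T(x,y,z),u,v) = T(T(x,u,v),T(y,u,v),T(z,u,v))$. Since $T = [\,\cdot\,,\,\cdot\,,\,\cdot\,]$ is TSD (stated in Section~\ref{sec:Pre}), the set of boundary-compatible colorings of the left tangle coincides with that of the right tangle, and one reads off the bijection by matching the uniquely determined internal arc colors on each side.

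The main obstacle — really the only nontrivial point — is bookkeeping: Reidemeister II and III each come in several variants according to the relative over/under information and (once strands are doubled) the precise routing of the four-plus doubled arcs through each crossing, and one must check that the coloring rule as stated, together with the direction-independence of the crossing relation, handles every variant. I would reduce this to the single generating version of each move (the others following by composing with already-handled RII moves and isotopies, the standard argument), and for that version carry out the one explicit color propagation; the RIII case reduces entirely to an application of the TSD identity and the RII case to the invertibility of $(x,x')\mapsto (xu^{-1}v, x'u^{-1}v)$, both of which are available. Thus the proof is essentially a matter of assembling: local coloring bijections from Lemmas already proven $\Rightarrow$ global coloring bijection $\Rightarrow$ the stated equivalence of coloring sets.
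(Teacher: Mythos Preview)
Your proposal is correct and takes essentially the same approach as the paper: the paper offers no separate proof at all, merely stating that the lemma follows ``from the definition \dots\ by checking the moves,'' and your plan is exactly that check spelled out---RII via invertibility of $(x,x')\mapsto(xu^{-1}v,x'u^{-1}v)$, RIII via the TSD identity (as in Figure~\ref{heaptypeIII}), and the cancellation move via the same computation already done in the well-definedness argument for $h(L)$.
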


In particular, the number of colorings of a framed link diagram by a finite heap $X$ is an invariant of the framed link $L$, that does not depend on the choice of a diagram, and is denoted by ${\rm Col}_X(L)$. 

\begin{remark}
{\rm
The set of colorings of a framed link $L$ by a heap $X$ can be considered as the set of heap homomorphisms
from $h(L)$ to $X$. Although the fundamental heap was defined by group presentations, these homomorphisms need not be group homomorphisms; assigning a single color to all arcs that is not the identity element is a heap homomorphism but not a group homomorphism. 
}
\end{remark}

We observe that from the definition, if $x=y$ at a crossing as in Figure~\ref{crossing}, then 
we have $z=w$. Consequently, if $x=y$ (the two colors are equal) at an arc, then all the arcs of this  component have this property. Conversely, if $x \neq y$ at a crossing as in Figure~\ref{crossing}, then 
the two colors of two strings are distinct at every arc of the component.

\begin{definition}\label{def:monocolor}
{\rm 
For a given coloring of a framed knot diagram by a  heap, a coloring with the same colors at each double  arc 
is called a {\it monocoloring}, and 
a component of a framed link with the property that double arcs receive the same color 
 is called
a {\it monochromatic} component. 

A coloring that is not monochromatic is called {\it bicoloring}, and 
a component of a framed  link with the property that double arcs receive distinct  colors
 is called
 a  {\it bicolored} component. 
 
 For a framed knot $K$, we denote by ${\rm Col}_X^{\rm B}(K)$  the number of bicolorings of $K$ by $X$.
}
\end{definition}

We note that if the over-arc is of monochromatic component, then 
the colors of the under-arc in Figure~\ref{crossing} 
satisfies $x=z$ and $y=w$, i.e., the monocolored over-arc does not change the colors of the under-arc.

From the definition we find that 
for  a framed knot $K$, and  a finite heap $X$, 
 the number of monochromatic colorings of $K$ by $X$ is $|X|$.

\begin{example}\label{ex:cordcolor}
{\rm
Let $\hat{C}_n$ be the closure of the diagram in Figure~\ref{telecord} with the left and the right end points connected with no twists, with the writhe $n$. 
Let $X$ be a group heap.

The set of colorings ${\mathcal C}$ of $\hat{C}_n$ by $X$ is in bijection with the pairs 
$(x, y) \in X \times X$ such that $( x^{-1}y  )^n =1$. The bijection is induced from assigning $(x,y)$ at the leftmost arcs as depicted in Figure~\ref{telecord}. 
In particular, for $X=\Z_n$, there are $n$ monocolorings and 
 $n(n-1)$  bicolorings. For $X=\Z_m$ with $(n,m)$ coprime, there are $m$ monocolorings and no bicoloring.

}
\end{example}

\begin{example}\label{ex:dihedraltorus}
	{\rm 
We examine colorings of $T_{(n,m)}(2,2k)$ when $X$ is the dihedral heap $D_3$, generated by a rotation $r$ and a reflection $a$.
Relations $\alpha^{k-n}=(\alpha \beta)^k =\beta^{k-m}$ in Example~\ref{ex:toruscolor} can also be regarded as coloring conditions. We use the same letters, $x,y,u,v$ and $\alpha=x^{-1}y$, 
$\beta=u^{-1}v$.

\begin{itemize}
\setlength\itemsep{-3pt}
\item (Case 00) If the both components represented by $\alpha$ and $\beta$ are monocolored, then $\alpha=1=\beta$.
Then  there are $6 \times 6=36$ colorings for assignments of elements of $D_3$ for $x=y$ and $u=v$, using letters in 	Example~\ref{ex:toruscolor}.

\item (Case 01) 
$\alpha=1$ (monocolored) and $\beta \neq 1$ (bicolored),
 then $\beta^{k-m}=\beta^k=1$,
that is equivalent to $\beta^m=\beta^k=1$. For each case $x=y$ has 6 choices. 
\begin{itemize}
\setlength\itemsep{-3pt}
\item (O2) The order of  $\beta$ is 2:  Both $m$ and $k$ need to be both even,
 $\beta=a, ra, r^2a $ and each case has 6 choices of $u$. There are $6$ (for $x$) $\times 3$ (for $\beta$) $\times 6$ (for $u$) colorings. 
 
\item (O3)  The order of  $\beta$ is 3:  Both $m$ and $k$ need to be  divisible by 3,
 $\beta= r, r^2 $ and each case has 6 choices of $u$. 
 There are $6$ (for $x$) $\times 2$ (for $\beta$) $\times 6$ (for $u$) colorings. 
\end{itemize}	
\item (Case 10) 
 $\beta=1$ (monocolored) and $\alpha \neq 1$ (bicolored), similar to (Case 01).
\item (Case 11)  $\alpha\neq 1$ and $\beta \neq 1$ (both bicolored). Again each case below has 6 choices each for $x$ and $u$.
\begin{itemize}
\setlength\itemsep{-3pt}
\item (O22)  The orders of  $\alpha$ and $\beta$ are both 2:
Both $n$ and $m$ need to be both even. 

\begin{itemize}
\item
 If $\alpha=\beta$, then $\alpha\beta=1$ and $k$ has no constraint.
There are $6$ (for $x$) $\times 3$ (for $\alpha =\beta$) $\times 6$ (for $u$) colorings. 

\item If $\alpha\neq \beta$, then the order of $\alpha\beta$ is 3, so that  $k$ needs to be divisible by $3$,
in which case there are $6$ (for $x$) $\times (3 \times 2)$ (for $\alpha \neq \beta$) $\times 6$ (for $u$) colorings. 
\end{itemize}

\item (O23) The order of  $\alpha$ is 2 and the order of $\beta$ is 3:
 $n$ needs to be even and $m$ need to be divisible by $3$. Since $\alpha=a, ra, r^2 a$ and $\beta=r, r^2$, the order of $\alpha \beta$ is 2, so that $k$ needs to be even, in which case
there are $6$ (for $x$) $\times 3$ (for $\alpha$)  $\times 2$ (for $\beta$) $\times 6$ (for $u$) colorings. 

\item (O32) The order of  $\alpha$ is 3 and the order of $\beta$ is 2:
Similar to the above.

\item (O33) The orders of  $\alpha$ and $\beta$ are both 3:
Both $n$ and $m$ need to be both divisible by 3. 

\begin{itemize}
\item
 If $\alpha=\beta=r, r^2$, then $\alpha\beta$ 
has order 3 and $k$ needs to be divisible by 3,
in which case there are  $6$ (for $x$) $\times 2$ (for $\alpha =\beta$) $\times 6$ (for $u$) colorings. 

\item If $\alpha\neq \beta$, then $\alpha\beta=1$
and $k$ has no constraint, and 
there are  $6$ (for $x$) $\times 2$ (for $\alpha \neq \beta$) $\times 6$ (for $u$) colorings. 
\end{itemize}

\end{itemize}
\end{itemize}

We note that from Section~\ref{sec:Wirt}, assignments of $\alpha$ and $\beta$ correspond to Wirtinger relation. In this case, assignments of reflections corresponds to Fox tricoloring, which is 
Case 11 (O22). Further, Subcase (A) corresponds to the trivial Fox colorings, and (B) non-trivial.

We also note that there are cases for arbitrary choice of $x$ and $u$. This corresponds to the free group factor $F_r$ in Theorem~\ref{thm:Wirt}.

	}
\end{example}

\subsection{Cocycle invariants}\label{sec:cocyinv}

In this subsection  we consider heap cocycle invariants of framed links.
Let $X$ be a heap  and $D$ an oriented blackboard framed link.
Then a 2-cocycle invariant is defined in a manner similar to the quandle 2-cocycle invariant
as follows.

For defining the invariant, we consider oriented framed link diagrams. An  (orientation) normal (vector)  is assigned at each oriented arc  of a diagram as  in  Figure~\ref{Wirtinger}, in which the crossing  is positive, and  otherwise negative. The {\it sign} $\epsilon(\tau)$ of a crossing $\tau$ is defined to be $1$ (resp. $-1$) if $\tau$ is positive (resp. negative). 
The orientation normal of the over-arc labeled $(u,v) $ points  from the arc labeled $(x,y)$ to the arc labeled $(z,w)$.

\begin{definition}\label{def:cocyinvariant}
{\rm
Let $X$ be a heap, $A$ an abelian group 
with multiplicative notation, 
and  $\psi$ be a 2-cocycle, $\psi \in Z^2_{SD}(X,A)$. 
Let $D$ be an oriented diagram of a knot $K$ with blackboard framing. 
In Figure~\ref{crossing} (right), assume that the over-arc is oriented downward. 
The {\it Boltzmann weight} $B_\ell({\mathcal C}, \tau)$, for $\ell=0,1$,  for a coloring ${\mathcal C}$ of a crossing $\tau$ 
is defined by $\psi(x_\tau, u_\tau, v_\tau)^{\epsilon(\tau) }$  and $\psi(y_\tau, u_\tau, v_\tau)^{\epsilon(\tau) }$, respectively, where 
colors at $\tau$ are  as depicted in Figure~\ref{crossing} with $\tau$ indicating the specified crossing.

The {\it 2-cocycle heap invariant} of a heap $X$ with respect to a 2-cocycle $\psi \in Z^2_{SD}(X,A)$
is defined by 
$$ \Psi_\psi (K) = \sum_{\mathcal C}
(  \prod_{\tau}  B_0 ({\mathcal C}, \tau),   \prod_{\tau}  B_1({\mathcal C}, \tau)), $$
which takes values in $\Z[A \times A]\ ( \cong \Z[A] \otimes \Z[A] ) $. 
}
\end{definition}

	The assignment of a 2-cocycle at a crossing is depicted in Figure~\ref{heaptypeIII} at the top left crossing.
The following is proved by arguments similar to those found in \cite{CJKLS}. We provide an outline of proof.

\begin{theorem}\label{thm:cocyinvariant}
	The $2$-cocycle heap invariant is indeed an invariant of framed links. Moreover, a 2-coboundary yields the trivial invariant value (in the $\Z \otimes \Z$ factor  spanned by $e \otimes e$ for the group identity $e$). 
	The cocycle invariant  $\Psi_\psi (K)$ depends only on the cohomology class $[\psi]\in H^2_{\rm SD}(X,A)$. 
\end{theorem}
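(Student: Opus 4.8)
The plan is to establish the three assertions of Theorem~\ref{thm:cocyinvariant} in the order they are stated, treating each as a standard state-sum argument adapted to the ribbon/heap setting. First, for invariance under the moves: by Lemma~\ref{lem:color} the set of colorings of a ribbon diagram is in bijection across Reidemeister type II, III moves and the cancellation move of Figure~\ref{cancel}, and it suffices to check that under such a bijection the product of Boltzmann weights $\prod_\tau B_\ell(\mathcal C,\tau)$ is unchanged for each $\ell=0,1$. The type II move and the cancellation move destroy two crossings of opposite sign (or, for cancellation, a pair whose weights coincide by the computation in the proof of the well-definedness lemma for $h(L)$), so the contributed weights cancel as $\psi(\cdots)^{+1}\psi(\cdots)^{-1}=e$; here one must track the orientation normals and the downward-over-arc convention of Definition~\ref{def:cocyinvariant} carefully so the arguments of $\psi$ really do agree. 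For the type III move, the local colorings before and after are matched exactly as in Figure~\ref{heaptypeIII}, and the equality of the two products of three weights each is precisely the $2$-cocycle condition $(*)$ evaluated at the colors around the triple point — this is the step where the TSD/heap $2$-cocycle identity is used, and it is the conceptual heart of the argument.

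The main obstacle I expect is bookkeeping rather than anything deep: the invariant is a pair $(\prod B_0,\prod B_1)$ and one has to verify the cocycle relation separately in the $B_0$-factor (weights $\psi(x_\tau,u_\tau,v_\tau)$) and the $B_1$-factor (weights $\psi(y_\tau,u_\tau,v_\tau)$), keeping straight which of the two strands of a doubled arc carries which label through the type III move, and confirming that the doubled-arc labels $(x,x')$ vs.\ $(y,z)$ of Figure~\ref{heaptypeIII} transform consistently with the heap operation $[x,y,z]=xy^{-1}z$. The sign conventions for positive/negative crossings and for reversed over-arc orientations (as in Proposition~\ref{prop:Wirt}) must be handled so that $\epsilon(\tau)$ appears in the exponent correctly in all mirror cases; once one case is done the others follow by the same computation, so I would do one representative case in detail and assert the rest.

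For the second assertion, that a coboundary contributes trivially: if $\psi=\delta^1\phi$ then $\psi(x,y,z)=\phi(x)-\phi(x-y+z)$ (writing the group law additively as in Example~\ref{ex:Z2}; multiplicatively $\phi(x)\phi(xy^{-1}z)^{-1}$), and in the product over all crossings of a component the telescoping structure — each doubled arc is the ``outgoing'' arc of exactly one crossing and the ``incoming'' arc of exactly one crossing around a closed component — causes all $\phi$-terms to cancel, leaving the group identity $e$; hence the tensor-factor contribution is $e\otimes e$. I would phrase this exactly as in the quandle case \cite{CJKLS}: assign to each arc its $\phi$-value, and observe the product of $\psi$-weights equals the product over arcs of $\phi(\text{arc})^{\pm1}$ which vanishes. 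The third assertion is then immediate: if $[\psi]=[\psi']$ then $\psi-\psi'$ is a coboundary, and since the weights are multiplicative in $\psi$ the invariants $\Psi_\psi(K)$ and $\Psi_{\psi'}(K)$ agree coloring-by-coloring up to the trivial factor $e\otimes e$, hence are equal in $\Z[A\times A]$. I would close by noting that although Definition~\ref{def:cocyinvariant} is stated for knots, the identical argument gives the statement for framed links, summing over colorings of all components.
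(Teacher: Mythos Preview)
Your proposal is correct and follows essentially the same approach as the paper: invariance via Lemma~\ref{lem:color} together with sign cancellation for type~II/cancellation moves and the $2$-cocycle condition $(*)$ for type~III. The only difference is in the coboundary step: the paper phrases it homologically --- a colored ribbon component represents a $2$-cycle in $Z_2^{\rm SD}(X,\Z)$, and then $\langle \delta f \mid C\rangle = \langle f \mid \partial C\rangle = 0$ --- whereas you give the equivalent direct telescoping computation (each arc appears once as ``incoming'' and once as ``outgoing,'' so the $\phi$-contributions cancel). These are the same argument at two levels of abstraction: your telescoping is precisely the verification that $\partial C = 0$, dualized.
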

\begin{proof} 
	By Lemma~\ref{lem:color}, the sets of colorings by a group heap are in bijection between
Reidemeister moves of types II, III, and the cancelation moves.
Hence the invariance of the product of weights is checked between the moves.
The invariance under the Reidemeister type II move and the cancelation move follows from 
 the sign convention of $\epsilon(\tau)$ in the exponent of the weight. 
The invariance under the type III move follows from the 2-cocycle condition. 
	
	The cocycle invariant $\Psi_\psi(L)$ for a framed link has the following interpretation as the Kronecker product as for the quandle cocycle invariant \cite{CJKLS}. 
Let $K=K_1 \cup K_2$ be a blackboard framed double stranded colored diagram of a component of  the link $L$.
Then colored components $K_i$ represents 2-cycles of  $Z_2^{\rm SD} (X, \Z)$ for $i=1,2$.
If $K_i$, $i=1,2$, 
 contribute $g_1 \otimes g_2$ in $\Z[A] \otimes \Z[A] \cong \Z [ A \times A]$ to $\Psi_\psi(L)$, 
where $g_1 , g_2 \in A$, then this means that $g_i =\braket{ \psi \mid K_i }=\psi(K_i)$ (the Kronecker product) for $i=1,2$. This holds for all double stranded components of all components of $L$.
If   $\psi  $ is a coboundary, $\psi= \delta f$
then  
$$\braket{ \ \sum_i a_i \psi_i \mid C\  }= \braket{\  \delta f \mid C\  }=\braket{ \ f  \mid \partial C \ } = 0  $$ for all 2-cycles $C$.
Hence any coloring contributes $e \otimes e$ to the invariant.
\end{proof}

Considering the decomposition of cohomology into
degenerate and nondegenerate parts in Proposition~\ref{prop:DH}, we characterize the cocycle invariant for degenerate cocycles as follows.

\begin{lemma}\label{lem:DH}
Let $X$ be a heap, $A=\Z_n$,  and $\psi \in Z^2_{\rm DH}(X, A)$ a generating degenerate 2-cocycle in Proposition~\ref{prop:degcohomology}.
Let $e\in A$ be the identity element,
and $g$ a multiplicative generator of $A$ such that $\psi(x,x,x)=g$. 

Let $K$ be a framed knot with writhe $n$. Then the cocycle invariant is 
$$\Psi_\psi (K)= {\rm Col}_X^{\rm B} (K) ( e \otimes e) + |X| (g^n \otimes g^n) , $$
where ${\rm Col}_X^{\rm B} (K) $ denotes the number of bicolorings in Defition~\ref{def:monocolor}. 
\end{lemma}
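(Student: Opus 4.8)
The plan is to evaluate the state sum $\Psi_\psi(K)$ directly on a diagram $D$ of $K$, using the explicit value table of the generating degenerate cocycle $\psi$ together with the monochromatic/bicolored dichotomy for colorings. The first step is to record this value table. By Proposition~\ref{prop:degcohomology} the group $H^2_{\rm DH}(X,A)$ is generated by the class of $\sum_{(x,y)\in X^2}\chi_{(x,y,y)}$, so a generating degenerate cocycle with $\psi(x,x,x)=g$ is the scalar multiple $g\sum_{(x,y)\in X^2}\chi_{(x,y,y)}$; equivalently, regarding $\psi\in Z^2_{\rm DH}(X,A)$ as a cochain on $C^{\rm SD}_2(X)$ that vanishes on the nondegenerate summand of Proposition~\ref{prop:DH}, one has $\psi(a,b,c)=g$ whenever $b=c$ (using the constancy $\psi(x,y,y)=\psi(u,v,v)$ from Lemma~\ref{lem:deg2cocy}) and $\psi(a,b,c)=e$ whenever $b\neq c$. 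This single fact drives the whole computation.

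Next I would partition the colorings. Since $K$ has a single component, every coloring $\mathcal C$ of $D$ is either a monocoloring or a bicoloring in the sense of Definition~\ref{def:monocolor}: if the two parallel strands carry equal colors on one doubled arc they do so on all doubled arcs of the component, and conversely. By the count recorded just before Example~\ref{ex:cordcolor} there are exactly $|X|$ monocolorings, and by definition there are ${\rm Col}_X^{\rm B}(K)$ bicolorings.

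Then I would compute the weight products crossing by crossing. Fix a crossing $\tau$ of $D$ with over-arc colored $(u_\tau,v_\tau)$ and under-arc colors $x_\tau,y_\tau$ as in Figure~\ref{crossing}. If $\mathcal C$ is a bicoloring, the over-arc lies on the bicolored component, so $u_\tau\neq v_\tau$, whence $B_0(\mathcal C,\tau)=\psi(x_\tau,u_\tau,v_\tau)^{\epsilon(\tau)}=e$ and likewise $B_1(\mathcal C,\tau)=e$; therefore $\prod_\tau B_0(\mathcal C,\tau)=\prod_\tau B_1(\mathcal C,\tau)=e$, so $\mathcal C$ contributes $e\otimes e$. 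If $\mathcal C$ is a monocoloring, then $u_\tau=v_\tau$ at every crossing, so $B_0(\mathcal C,\tau)=\psi(x_\tau,u_\tau,u_\tau)^{\epsilon(\tau)}=g^{\epsilon(\tau)}$ and identically $B_1(\mathcal C,\tau)=g^{\epsilon(\tau)}$, independently of $x_\tau,y_\tau$. Hence $\prod_\tau B_0(\mathcal C,\tau)=\prod_\tau B_1(\mathcal C,\tau)=g^{\sum_\tau\epsilon(\tau)}=g^{n}$, since the sum of the crossing signs of $D$ is the writhe, which is $n$ by hypothesis; thus $\mathcal C$ contributes $g^n\otimes g^n$. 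Summing over all colorings and using the two counts from the previous step gives $\Psi_\psi(K)={\rm Col}_X^{\rm B}(K)\,(e\otimes e)+|X|\,(g^n\otimes g^n)$; independence of the chosen diagram is guaranteed by Theorem~\ref{thm:cocyinvariant}.

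There is no substantive obstacle: once the value table of $\psi$ is in hand, the argument is bookkeeping. The two points deserving a little care are (i) the justification that $\psi$ vanishes on all triples $(a,b,c)$ with $b\neq c$, which is exactly where membership in $Z^2_{\rm DH}(X,A)$—i.e. the splitting of Proposition~\ref{prop:DH}—is invoked, and (ii) the observation that for a monochromatic $K$ the over-arc at each crossing genuinely carries coinciding colors on its two strands, so that the Boltzmann weight $g^{\epsilon(\tau)}$ is the same whether one plugs in $x_\tau$ or $y_\tau$.
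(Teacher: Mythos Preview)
Your proof is correct and follows essentially the same approach as the paper: split colorings into bicolorings and monocolorings, observe that the degenerate cocycle yields trivial weight $e$ at every crossing under a bicoloring (since the over-arc colors are distinct) and weight $g^{\epsilon(\tau)}$ at every crossing under a monocoloring (since the over-arc colors coincide), then sum. Your write-up is more explicit than the paper's---in particular you spell out the value table of $\psi$ via Proposition~\ref{prop:DH} and Lemma~\ref{lem:deg2cocy}, and you make the single-component observation that forces the over-arc to be bicolored whenever the coloring is---but the structure is the same.
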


\begin{proof}
Each bicoloring contributes $e \times e \in A \times A$ to the Boltzmann weight since such a coloring evaluates $e$ by a degenerate cocycle $\psi$. Hence bicolorings contribute the first term
${\rm Col}_X^{\rm B} (K) ( e \otimes e) $.

For each monochromatic coloring, a crossing in Figure~\ref{crossing} (B), with labels regarded as colors,
contribute $g^{\epsilon(\tau)} \times g^{\epsilon(\tau)} $ to the Boltzmann weight, where 
$\epsilon(\tau)$ denotes the sign of the crossing. The number of monochromatic colorings is $|X|$, so that they contribute the term  $|X| (g^n \otimes g^n)$.
\end{proof}

\begin{example}\label{ex:telecordinv}
{\rm

Let $X$ be a group heap, $A$ an abelian group, and $\psi \in Z^2_{\rm SD}(X,A)$ a 2-cocycle. 
Let $\hat{C}_n$ be the closure of the diagram in Figure~\ref{telecord} with the left and the right end points connected with no twists, with the framing $n>0$. 
From the figure, with colors $(x , y )$ at the leftmost arcs, we see that the color changes from 
$(x , y )$ to $(y , y  x^{-1} y )$ at the first crossing, and from 
$( x( x^{-1} y )^{i-1}  , y( x^{-1}y )^{i-1}  )$ to $(  x ( x^{-1} y  )^{i}  , y ( x^{-1} y )^{i} )$ at the $i^{\rm th}$-crossing for $i>1$.
Hence the 2-cocycle invariant of $\hat{C}_n$ with $\psi$ is expressed as 
\begin{eqnarray*}
\Psi_{\psi} (\hat{C}_n)&=&
\sum_{ {\mathcal C} } \ 
(  \ 
 \prod_{i=1}^{n}  \psi( (x (x^{-1} y )^{i-1} ,  x ( x^{-1} y)^{i} , y( x^{-1}y )^{i}  ) ,  \\
& & \hspace{10mm} 
 \prod_{i=1}^{n} 
 \psi ( y (  x^{-1}y )^{i-1}  , x (x^{-1} y)^{i}  ,y ( x^{-1} y)^{i}  ) \  )   .
\end{eqnarray*}

	From Example~\ref{ex:cordcolor} and Lemma~\ref{lem:DH}, we consider the case $A=\Z_n$ and with a 
nondegenerate 2-cocycle  in Lemma~\ref{lem:2cocy},
$\psi(x, y,z)=\psi_{(1,0,0)}(x, y,z)=x(z-y)$.

The first weight $ 
\prod_{i=1}^{n}  \psi( (x (x^{-1} y )^{i-1} ,  x ( x^{-1} y)^{i} , y( x^{-1}y )^{i}  )
 $
 is computed in additive notation, by setting $\alpha=y-x$, as 
$$	 \sum_{i=1}^n ( [  ( x+  (i-1) \alpha  ) ( ( y+ i\alpha  ) - ( x + i \alpha  ) ] 
= \sum_{i=1}^{n} [x+ (i-1)\alpha ]  \alpha  =
		[ \sum_{i=1}^{n} (i-1) ]  \alpha^2 , 
$$
where the last summation is 0 for odd $n$ and $n/2$ for even $n$.
Similarly the second weight $
	 \prod_{i=1}^{n} 
 \psi ( y (  x^{-1}y )^{i-1}  , x (x^{-1} y)^{i}  ,y ( x^{-1} y)^{i}  )
 $ is
$$	 \sum_{i=1}^n ( [ y+   (i-1) \alpha  ) ( ( y+  i\alpha  ) - ( x+ i \alpha) ) ] 
	=  \sum_{i=1}^{n} [ y + (i-1)\alpha ]\alpha
	=
		[ \sum_{i=1}^{n} (i-1) ] \alpha^2   .
$$
Then the weight product   is $e  \otimes e $ ($e$ is a multiplicative generator of the cyclic group, that is the additive group of $\Z_n$)
for odd $n$ and $  (n/2)\alpha^2  \otimes  (n/2)\alpha^2 $ for even $n$.
Hence  if $n$ is odd, then  $\Psi_\psi (\hat{C}_n) = n^2 ( e \otimes e ) $.
If $n$ is even, then 
  $\Psi_\psi (\hat{C}_n) = n( e \otimes e )+ n [\,   \sum_{0 \neq \alpha \in \Z_n}  ( (n/2)\alpha^2  \otimes  (n/2)\alpha^2)   \, ] $,
where the first term corresponds to colorings with $x =y$ and the second to those with $x  \neq y$.
In the second sum, for each  $\alpha=y-x\neq 0$, there are $n$ choices of $x$, hence the coefficient $n$.
The non-triviality of the invariant in the case of $n$ even corresponds to the non-triviality in Proposition~\ref{prop:nondeg}. In particular, the bicolored diagram of $\hat{C}_n$ for $n$ even 
represents a non-trivial class of $H_2^{\rm NDH}(\Z_n, \Z_n)$. 
}
\end{example}

We define the 2-cocycle invariant for framed links componentwise as in   \cite{CJKLS} as follows.

\begin{definition}\label{def:linkinv}
{\rm 
Let $L=K_1 \cup \cdots \cup K_\mu$ be an oriented link diagram of $\mu$ components. 
A crossing   $\tau =\tau(j)$ of $L$ is said to be of the component  $j$ ($j=1, \ldots, \mu$) if the under-arc belong to the component $K_j$. 
Let $X$ be a group heap 
and $\psi \in Z^2_{\rm SD}(X, A)$ be a 2-cocycle with coefficient abelian groups $A$. 
For a coloring ${\cal C}$, the weight $B_\ell^{(j)}({\cal C}, \tau)$ for $\tau=\tau^{(j)}$ of the component $j$ is defined to be $ \psi(x_\ell, y_0, y_1)^{\epsilon(\tau)}  \in A $ where $\epsilon(\tau)$ is the sign of $\tau$ and $\ell=0,1$.
Then the cocycle invariant is defined by 
$$ \vec{\Psi}_{\psi} (L)= \sum_{ \cal C} ( \  ( \prod_{ \tau^{(1)} } B_0^{(1)} ( {\cal C}, \tau^{ (1)} ), 
\prod_{ \tau^{(1)} } B_1^{(1)} ( {\cal C}, \tau^{ (1)} ) )  , \ldots, 
 ( \prod_{ \tau^{(\mu)} } B_0^{(\mu)} ( {\cal C}, \tau^{ (\mu)} ), 
\prod_{ \tau^{(\mu)} } B_1^{(\mu)} ( {\cal C}, \tau^{ (\mu)} )
\  ) 
$$
 where the sum is the formal sum on each component of vectors. 
 }
 \end{definition}

 \begin{example}\label{ex:Zninv}
 {\rm
 We determine the cocycle invariant for the torus link $T(2, 2n)=T_{(0,0)}(2,2n)$ with $2n$ crossings 
using Lemma~\ref{lem:torus2n} with $X=\Z_n=\braket{r}$ for coloring and the $2$-cocycles 
$\phi_i=\sum_{x \in \Z_n } [ \sum_{j=0}^{n-1}  \chi_{(x,r^j,r^{j+i})} ] $, $i=1, \ldots, n-1$,  in Lemma~\ref{lem:Zn}. Let the coefficient group be $A=\Z=\braket{g}$. 

Recall from Lemma~\ref{lem:torus2n}  that when the top  arcs are colored by $(x, y)$ (left top) and $(u, v)$ (right top), and setting
$\alpha=x^{-1}y$ and $\beta=u^{-1}v$, the coloring condition is 
$\alpha^{n} =\beta^{n} =(\alpha\beta)^n $ (in multiplicative notation) with arbitrary choice for $x$ and $u$. 
For every choice of $\alpha, \beta \in \Z_n$ these relations are satisfied. For choices for $x, u, \alpha, \beta$, the number of colorings is $n^4$. 

The cocycle $\phi_i$ takes the value $g$ if and only if the over-arc at a crossing is $(r^j, r^{j+i})$ 
for some $j$, and otherwise takes the value  identity $e$, regardless of colors of the under-arc.
If $(x,y)\times (u, v) = (r^j, r^{j+i}) \times (r^{j'}, r^{j'+i})$, then after the first crossing down, the colorings changes to $(r^{j'}, r^{j'+i}) \times (r^{j+i}, r^{j+2i})$, and inductively, every crossing contributes $g$, in total $g^n$ from $n$ crossings.
There are $n^2$ colorings such that both $(x,y)$ and $(u, v)$ are of this form with $i\neq 0$,  which contribute
$g^n \otimes g^n$ to the invariant from both components, hence we obtain the term $n^2 (g^n \otimes g^n, g^n \otimes g^n)$.

Let us now count the colorings where one component contributes nontrivially and the other does not. Without loss of generality we suppose that the component corresponding to coloring $(x,y)$ contributes nontrivially. There are $n$ choices for $(x,y)=(r^j, r^{j+i})$, for a choice of $i = 1, \ldots , n-1$, as observed above. For $(u,v)$ contributing trivially to the cocycle invariant we need $(u,v) = (r^j,r^{j+k})$ for some $j$ and some $k \neq i$. So we have a total number of choices given by $n^2-n$, since we have $n$ possibilities for $j$ and $k$ each, to which we subtract $n$ colorings corresponding to case $k = i$. These give a summand of the invariant equal to $n(n-1)(e \otimes e, g^n  \otimes g^n)$.
Similarly we obtain $n(n-1) ( g^n  \otimes g^n, e \otimes e)$.
There are $n^4 - [n^2-n(n-1)]= n^4 + n$ colorings such that neither of $(x,y)$ nor $(u,v)$ is of the form
$(r^j, r^{j+i})$
contributing $(e \otimes e, e \otimes e)$. Hence we obtain
\begin{eqnarray*}
\Psi_{\phi_i} (T(2,2n)) &=& 
n^2   (g^n \otimes g^n, g^n \otimes g^n) +  (n^4 + n)(e \otimes e, e \otimes e) \\
&+& n(n-1) [ \   ( g^n  \otimes g^n, e \otimes e) + (e \otimes e, g^n  \otimes g^n)  \ ]  . 
\end{eqnarray*}

Next we consider the linear combinations 
$\phi=\phi_{\vec{a}}=\sum_{i=1}^n a_i  \phi_i$, where $\phi_i$ are 2-cocycles defined above,
for $\vec{a}=(a_i) \in \Z_n$.
If $(x,y)=(r^a, r^{a+j})$ and $(u,v)=(r^b, r^{b+i})$, then this coloring contributes
$ (g^{a_i n}\otimes g^{a_i n}, g^{a_j n}\otimes g^{a_j n})$. There are $n^2$ such colorings. In order to have a component contributing trivially and another component contributing nontrivially, one of the pairs $(x,y)$ or $(u,v)$ has to be of type $(r^k,r^k)$, i.e. monochromatic. So there are $n$ colorings for each pair with this property, while for the other pair we have $n(n-1)$ different colors. The colorins contributing trivially arise when both pairs $(x,y)$ and $(u,v)$ are monochromatic, which amounts to a total number of $n^2$. We have obtained

\begin{eqnarray*}
 \Psi_{\phi_{\vec{a}}}(T(2,2n) ) 
&= & 
n^2 \sum_{i,j} (g^{a_i n}\otimes g^{a_i n}, g^{a_j n}\otimes g^{a_j n}) 
+  n^2 (e \otimes e, e \otimes e) \\
& & + n [ \   \sum_{k}  ( g^{a_k n}   \otimes g^{a_k n} , e \otimes e) +   \sum_{l}  (e \otimes e, g^{a_l n}  \otimes g^{a_l n}   )  \ ].  
\end{eqnarray*}
Observe that summing all the types of colorings as corresponding to the three contributions above we obtain $n^2(n-1)^2 + n^2 + 2n^2(n-1) = n^4$, which is the total number of colorings, as expected. 

 }
 \end{example}

 \begin{example}\label{ex:Dninv}
 {\rm
 We further examine the invariant for the torus link $T(2, 2n)$ 
  with $X=D_n$ for coloring and the $2$-cocycles 
 $\psi_i=\sum_{x \in D_n } [ \sum_{j=0}^{n-1} (  \chi_{(x,r^j,r^{j+i})} +\chi_{ (x, ar^{-j},ar^{-j-i}  ) } )  ] $
  in Lemma~\ref{lem:Dn}. Let the coefficient group  be $A=\Z=\braket{g}$.  
 We continue to use the notation from Example~\ref{ex:Zninv}.

We recall that the coloring conditions from Lemma~\ref{lem:torus2n} are 
$ \alpha^n=\beta^n=(\alpha\beta)^n=1$.
The colorings are determined as follows.
	\begin{itemize}
		\item[1.]
		If both $\alpha$ and $\beta$ are rotations, i.e. 
		$\alpha=r^j$ and $\beta=r^k$ for some $j,k$,  then the coloring conditions are satoisfied for all $\alpha, \beta$ and $x,u$. 
		\item[2.] 
		If  $\alpha$ is a rotation $r^j$ and or $\beta$ is a reflection $r^k a$, then
		$\alpha^n=1$ is satisfied. 
		If $n$ is odd, 
		 then since  $\beta$ has order $2$,  $\beta^n\neq 1$ and  there is   no colorings.
		 		 If $n$ is even, any choice of $\beta$ would satisfy the equation $\beta^n=(\alpha\beta)^n = 1$, since both are reflections and have order 2.
				  The case of $\alpha$ reflection and $\beta$ rotation  is similar. 
		\item[3.] Suppose  both $\alpha, \beta$ are reflections. If $n$ is odd, 
		there is no coloring. If $n$ is even, then any choice would color.
	\end{itemize}
By definition of $\psi_i$, a crossing contributes nontrivially to the cocycle invariant $\Psi$, 
if and only if the over-arc is colored by $(tr^j  ,tr^{j+i} )  $ with $t = 1,a$. 
Suppose $n$ is even. 
If both $(x,y)$ and $(u, v)$ are of this form, then such a coloring contributes $g \in A$ at every crossing as in the preceding example, in total $g^n \otimes g^n$ to the invariant.
There are $n$ choices for $j$ and 2 choices for  $t=1,a$ for  $(x,y), (u,v)=(tr^j  ,tr^{j+i}) $, hence these colorings contribute $4n^2 (g^n \otimes g^n, g^n \otimes g^n)$ to the invariant.
If $(x,y)$ is of this form and $(u,v)$ is not, then the contribution is $(e \otimes e, g^n \otimes g^n)$. In order to have $(u,v)$ not of the form $(tr^j  ,tr^{j+i})$, we have $2n(n-1)$ cases corresponding to colorings of type $(tr^\ell,tr^{\ell+k})$ with $k\neq i$, and $2n^2$ colorings of type $(a^sr^\ell,a^{s+1}r^k)$ where $s$ is taken modulo $2$. So we obtain a total number of colorings of $2n^2(n-1) + 4n^3$. The opposite case is similar.
The remaining cases contribute $(e \otimes e,e \otimes e)$.
Suppose $n$ is odd. Then there is no colorings of cases 2 and 3, and the coefficient
of $(e \otimes e, g^n \otimes g^n)$ and $(g^n \otimes g^n, e \otimes e)$ is
$2n^2(n-1)$ since $\beta$ cannot be reflections in this case, and there are $2n(n-1)$ colorings $(tr^\ell,tr^{\ell+k})$ with $k\neq i$.
The cocycle invariant is therefore
$$
\Psi_{\psi_i}(T(2,2n) )  =\begin{cases}
4n^2  (g^n\otimes g^n, g^n\otimes g^n) +   m (1\otimes 1, 1\otimes 1) \\
+ (4n^2(n-1) + 4n^3) [\ (g^n\otimes g^n,1\otimes 1) + (1\otimes 1, g^n\otimes g^n)\ ] \quad n\ {\rm even}\\
4n^2 (g^n\otimes g^n, g^n\otimes g^n) + m' (1\otimes 1, 1\otimes 1) \\
+ 2n^2(n-1) [ \ (g^n\otimes g^n,1\otimes 1) +  (1\otimes 1, g^n\otimes g^n) \ ] \hspace{18mm}  n\ {\rm odd} .
\end{cases} 
$$
The coefficients $m$, $m'$ are from  the remaining cases.

Next we consider  the linear combinations 
$\phi=\psi_{\vec{a}}=\sum_{i=1}^n a_i  \psi_i$, where $\psi_i$ are 2-cocycles defined above,
for $\vec{a}=(a_i) \in \Z_n$.
If $(x,y)=(tr^a , tr^{a+j} )$ and $(u,v)=(tr^b , tr^{b+i} )$ for $t=1,a$,  then this coloring contributes
$ (g^{a_i n}\otimes g^{a_i n}, g^{a_j n}\otimes g^{a_j n})$. There are $(2n)^2$ such colorings.
Other cases are similar, and we have 
$$
\Psi_{\psi}(T(2,2n) )  =\begin{cases}
4n^2  \sum_{i,j} (g^{a_i n}\otimes g^{a_i n}, g^{a_j n}\otimes g^{a_j n}) 
 +   \ell (1\otimes 1, 1\otimes 1) \\
+ 4n^3[\   \sum_{k}   (g^{a_k n}  \otimes g^{a_k n},1\otimes 1) + 
  \sum_{l } (1\otimes 1, g^{a_l n}\otimes g^{a_l n}  )  \ ] \  \quad n\  {\rm even}\\
4n^2 \sum_{i,j} (g^{a_i n}\otimes g^{a_i n}, g^{a_j n}\otimes g^{a_j n}) 
+ \ell' (1\otimes 1, 1\otimes 1) \\
+ \   n[\sum_{k}  (g^{a_k n}  \otimes g^{a_k n},1\otimes 1) +
 \sum_{l}   (1\otimes 1, g^{a_l n}\otimes g^{a_l n}) \ ]\hspace{9mm}  n\ {\rm odd} .
\end{cases} 
$$

  }
 \end{example}

We show that computations of the cocycle invariant above can be used to derive algebraic consequences in cohomology, by providing lower bounds of the rank of cohomology groups as follows.

\begin{proposition}\label{prop:cyclic/dihedral}
For the cyclic $\Z_n$ and dihedral $D_n$ heaps, we have 
${\rm rank}\ H^2_{\rm NDH} (X, \Z) \geq n-1$, for all $n\geq 2$, where $X=\Z_n, D_n$.
\end{proposition}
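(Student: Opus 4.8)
The plan is to exhibit $n-1$ explicit classes in $H^2_{\rm NDH}(X,\Z)$ and prove they are $\Z$-linearly independent: a family of elements of an abelian group admitting no non-trivial integral relation generates a free subgroup of the corresponding rank, and rank is monotone under inclusion, so this yields ${\rm rank}\,H^2_{\rm NDH}(X,\Z)\ge n-1$. For $X=\Z_n$ the candidates are the classes of the nondegenerate $2$-cocycles $\phi_1,\dots,\phi_{n-1}\in Z^2_{\rm NDH}(\Z_n,\Z)$ of Lemma~\ref{lem:Zn}, and for $X=D_n$ those of $\psi_1,\dots,\psi_{n-1}\in Z^2_{\rm NDH}(D_n,\Z)$ of Lemma~\ref{lem:Dn}. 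Independence will be detected by the cocycle invariant of the torus link $T(2,2n)$, which has already been computed for arbitrary integral combinations of these cocycles in Examples~\ref{ex:Zninv} and~\ref{ex:Dninv}.

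I would run the argument for $X=\Z_n$, the dihedral case being verbatim the same. Given a non-zero $\vec a=(a_1,\dots,a_{n-1})\in\Z^{n-1}$, set $\phi_{\vec a}=\sum_i a_i\phi_i$ and suppose toward a contradiction that $[\phi_{\vec a}]=0$ in $H^2_{\rm NDH}(\Z_n,\Z)$. Then $\phi_{\vec a}$ is a coboundary of the nondegenerate subcomplex, hence, by the direct-sum splitting $C^\bullet_{\rm SD}\cong C^\bullet_{\rm DH}\oplus C^\bullet_{\rm NDH}$ of Proposition~\ref{prop:DH}, a coboundary of the full complex $C^\bullet_{\rm SD}$. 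By Theorem~\ref{thm:cocyinvariant} the invariant $\Psi_{\phi_{\vec a}}(T(2,2n))$ is then supported entirely on the group identity, i.e. it is an integer multiple of $(e\otimes e,\,e\otimes e)$. On the other hand, Example~\ref{ex:Zninv} writes $\Psi_{\phi_{\vec a}}(T(2,2n))$ as an explicit \emph{non-negative} integer combination of the basis elements $(g^{a_in}\otimes g^{a_in},\,g^{a_jn}\otimes g^{a_jn})$, $(g^{a_kn}\otimes g^{a_kn},\,e\otimes e)$, $(e\otimes e,\,g^{a_ln}\otimes g^{a_ln})$ and $(e\otimes e,\,e\otimes e)$, with coefficient $n^2$ on the first family. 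Picking $i_0$ with $a_{i_0}\ne0$ and using that the coefficient group $A=\Z$ has $g$ of infinite order, so $g^{a_{i_0}n}\ne e$, the basis element $(g^{a_{i_0}n}\otimes g^{a_{i_0}n},\,g^{a_{i_0}n}\otimes g^{a_{i_0}n})$ is distinct from $(e\otimes e,\,e\otimes e)$ yet occurs with coefficient $n^2\,\lvert\{i:a_i=a_{i_0}\}\rvert^2\ge n^2>0$; since every coefficient in the formula is non-negative, this term cannot be cancelled, so $\Psi_{\phi_{\vec a}}(T(2,2n))$ is not a multiple of $(e\otimes e,\,e\otimes e)$, a contradiction. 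Hence $[\phi_{\vec a}]\ne0$, the $[\phi_i]$ are $\Z$-linearly independent, and ${\rm rank}\,H^2_{\rm NDH}(\Z_n,\Z)\ge n-1$. For $X=D_n$ one repeats the argument with $\psi_{\vec a}=\sum_i a_i\psi_i$ and the formula of Example~\ref{ex:Dninv}, where the distinguished term $(g^{a_{i_0}n}\otimes g^{a_{i_0}n},\,g^{a_{i_0}n}\otimes g^{a_{i_0}n})$ carries coefficient $4n^2\,\lvert\{i:a_i=a_{i_0}\}\rvert^2\ge 4n^2>0$, again against a formula with only non-negative coefficients.

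Essentially all the real content already resides in Lemmas~\ref{lem:Zn}--\ref{lem:Dn} and Examples~\ref{ex:Zninv}--\ref{ex:Dninv}, so there is no serious obstacle; the two points I would flag are (i) that a nondegenerate coboundary is genuinely a coboundary of $C^\bullet_{\rm SD}$ — required for Theorem~\ref{thm:cocyinvariant} to force the invariant to be trivial — which is immediate from the splitting in Proposition~\ref{prop:DH}, and (ii) the non-cancellation step, namely that no accidental collision among the monomials in Examples~\ref{ex:Zninv} and~\ref{ex:Dninv} can annihilate the distinguished term, which holds because every coefficient in those formulas is non-negative and the diagonal value $g^{a_{i_0}n}$ differs from $e$. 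I expect (ii) to be the only place requiring care. I would close by noting that absence of a non-trivial integral relation among the $[\phi_i]$ is precisely $\Q$-linear independence of their images in $H^2_{\rm NDH}(X,\Z)\otimes\Q$, which is the asserted lower bound on the rank.
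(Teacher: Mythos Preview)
Your proposal is correct and follows essentially the same approach as the paper's proof: show that for any non-zero $\vec a$ the invariant $\Psi_{\phi_{\vec a}}(T(2,2n))$ (resp.\ $\Psi_{\psi_{\vec a}}(T(2,2n))$) computed in Examples~\ref{ex:Zninv} and~\ref{ex:Dninv} contains a non-trivial term, whence $\phi_{\vec a}$ (resp.\ $\psi_{\vec a}$) is not a coboundary by Theorem~\ref{thm:cocyinvariant} and the classes $[\phi_i]$ (resp.\ $[\psi_i]$) are $\Z$-linearly independent. Your explicit handling of points (i) and (ii)---the passage from nondegenerate to full coboundaries via the splitting of Proposition~\ref{prop:DH}, and the non-cancellation argument using non-negativity of the coefficients and infinite order of $g$---makes precise what the paper's brief proof leaves implicit.
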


\begin{proof}
For $X=\Z_n$, we use the cocycle invariant value of $\phi_{\vec{a}}$ in Example~\ref{ex:Zninv}. 
For any $\vec{a}\neq 0$, the invariant value contains a non-trivial term (not equal to $(e\otimes e, e\otimes e)$). Furthermore, there is a nondegenerate  coloring containing a non-trivial term.
This implies that $\phi_{\vec{a}}$ is non-trivial in $H^2_{\rm NDH}(X, \Z)$ by 
Theorem~\ref{thm:cocyinvariant}. Hence $\{ [ \phi_i ] \} $ is linearly independent, and we obtain the result.
A similar argument applies to the case $X=D_n$ using Example~\ref{ex:Dninv}.
\end{proof}

\noindent
{\it Acknowledgements.} MS was supported in part  by  NSF DMS-1800443. 

\appendix

\section{Proofs for Section~\ref{sec:coset}} \label{app:exactseq}

\noindent
{\it Proof of Example~\ref{ex:Z4}}. 
A cocycles $\phi \in    \widehat{Z}^2_{{\rm N} \{G\}}(X,A) $ satisfies $(*)$ and
$\psi(x, y, y)=0$ for all $x, y \in X$ (nondegenerate condition) and 
$\phi(x, 0, 2)=\phi(x,2,0)=0$, $\phi(x, 1,3)=\phi(x,3,1)=0$ for all $x\in X$ (the localized quotient condition). 
By Lemma~\ref{lem:core}, we have 
\begin{eqnarray*}
& & \phi(x, 0,1)=\phi(x, 1,2)=\phi(x, 2,3) = \phi(x, 3,0 ), \\
& &  \phi(x, 0,3)=\phi(x, 3,2)=\phi(x, 2,1) = \phi(x, 1,0 ) .
 \end{eqnarray*}
 Proposition~\ref{prop:equivariance} implies
$\phi(x,y,z)=\phi(x+2,y+2,z+2)$ for all $x,y,z \in X$. 
 Hence by setting  $\xi_{(x, \pm 1)} =  \sum_{ y  \in X }   \chi_{( x, y, y\pm 1)} $,
  $\phi$ is expressed as 
\begin{eqnarray*}
\phi &=& 
a_0\  [ \xi_{(0, 1)} + \xi_{(2,1)} ] 
\ + \ a_1 \  [ \xi_{(1, 1)} + \xi_{(3,1)} ] \\ 
&+&    b_0\   [ \xi_{(0, -1)} + \xi_{(2,-1)} ] 
\  + \ b_1\   [ \xi_{(1, -1)} + \xi_{(3,-1)} ] . 
 \end{eqnarray*}
 In $(*)$, if $z-y=1=v-u$, then we have
 $$ \phi(x,y,z)-\phi(x+1, y+1, z+1)-\phi(x, u, v) + \phi(x+1, u,v)=0.$$
 For $x=0$ this implies $a_0 - a_1 - a_0 + a_1=0$ for corresponding terms, which is satisfied. 
 Similarly, for all $x$ the above equation is satisfied, as well as the case $z-y=-1=v-u$.
In fact, recall that Proposition~\ref{prop:equivariance} is proved with the case $v-u\in G$, in this case
$v-u=2$ mod 4. The case $z-y=2$ in $(*)$ implies $\phi(x,u,v)=\phi(x+2, u, v)$ for all $u, v$. 
Hence these cases do not introduce new equations among $a_i$ and $b_j$ for $i, j =0,1$.
The two cases (A) $z-y=1$ and $v-u=-1$, (B) $z-y=-1$ and $v-u=1$ remain. 
In Case (A), $(*)$ is written as
$$ \phi(x, y, z) - \phi(x-1, y-1, z-1) - \phi(x, u, v) + \phi(x+1, u, v)=0.$$
For $x=0$, the terms imply, respectively, $a_0 - b_1 - b_0 + b_1=0$, that is, $a_0=b_0$. 
For $x=1$, we obtain  $a_1 - a_0 - b_0 + b_0=0$, so that $a_0=a_1$. 
For $x=2$, we obtain  $a_0 - a_1 - b_0 + b_1=0$, so that $b_0=b_1$. 
Hence we obtain $\psi= a \sum_{x, y \in X} \chi_{(x, y, y \pm 1)}$, and $   \widehat{Z}^2_{{\rm N} \{G\}}(X,A) \cong A$. 
One computes
\begin{eqnarray*}
\delta \chi_{(0)} &=& 
[ \xi_{(0, 1)} +  \xi_{(0, -1)} ] - [  \xi_{(1, -1)} +  \xi_{(3,1)} ] \\
\delta \chi_{(1)} &=&
[  \xi_{(1, 1)} + \xi_{(1, -1)}]-[  \xi_{(0, 1)} +   \xi_{(2,-1)} ]  \\
\delta \chi_{(2)} &=& 
[ \xi_{(2,1)} +  \xi_{(2,-1)} ] -  [  \xi_{(1, 1)} + \xi_{(3,-1)} ] \\
\delta \chi_{(3)} &=& 
[ \xi_{(3,1)}  + \xi_{(3,-1)} ] - [  \xi_{(0, -1)} + \xi_{(2,1)} ] .
\end{eqnarray*}
Hence $\widehat{H}^2_{ {\rm N} \{ G \} } (X, A)\cong A $.

Next we compute $H^2_{ {\rm N} \{ G \} } (X, A)$. 
Generators of $C^2_{ {\rm N} \{ G \} } (X, A)$ are $\chi_{(x, y, z)}$ 
where $\{ y, z \} = \{0, 2\}$ or $\{ 1,3\}$. Let $\eta \in C^2_{ {\rm N} \{ G \} } (X, A)$.  
By Lemma~\ref{lem:core} we have 
$\eta (x, y, z) =\eta (x, z, y)$ for all $x \in X$ 
since $z^{-1}yz=y$ in $X$ for $\{ y, z \} = \{0, 2\}$ or $\{ 1,3\}$.
Recall that $(*)$ reduces to these conditions under $y=u$ and $z=v$,  from the proof of Lemma~\ref{lem:core}.
Set  $\zeta_{(x,y)}:= \chi_{(x, y, y+2)} + \chi_{(x, y+2, y)}$, then 
 $\eta$ can be written as 
$\eta=\sum_{ x \in X, \ y \in \{0,1\}} a_{(x,y)}\zeta_{(x,y)}$.
Since $y \neq z$ and $u\neq v$ by assumption, the equation $(*)$  in $C^2_{ {\rm N} \{ G \} } (X, A)$ 
reduces to the cases 
$y \neq z$, $u\neq v$ 
 and either $y\neq u$ or $z \neq v$. 
 If $(y,z)=(0,2)$, for example, then it follows that 
 $(u,v)=(1,3), (2,0), (3,1)$ under these conditions. In all cases we have 
 $(u,v)=(y+\epsilon, z+\epsilon)$ for $\epsilon =1,2,3$. 
We also have $-y+z\equiv -u+v\equiv 2$ mod $4$, so that $(*)$ reduces to
$$\eta(x,y,z) -\eta(x+2, y+2, z+2) - \eta(x,u,v)  + \eta(x+2, u,v) =0$$
for  $(u,v)=(y+\epsilon, z+\epsilon)$ and  $\epsilon =1,2,3$. 
Since $\eta(x+2, y+2, z+2)=\eta(x+2, z+2, y+2)=\eta(x+2, y, z)$ for $\{ y, z \}= \{0, 2\}$ or $\{ 1,3\}$,
we have 
$\eta(x,y,z) - \eta(x+2, y, z)=  \eta(x,u,v) - \eta(x+2, u,v) $.
This implies that $\eta(x,y,z) - \eta(x+2, y, z)$ is constant over $\{y, z\}=\{ 0, 2\}$ and $\{1,3\}$,
and we obtain that $a_{(x,y)}-a_{(x+2,y)}$ is constant over $y \in \{0,1\}$.
One computes 
$
\delta
\chi_{(x)}=  \sum_{y \in \{ 0, 1 \} } 
[ \zeta_{(x,y)} - \zeta_{(x+2, y)} ] 
$,
where indices are mod 4. Hence  $\delta C^1_{ {\rm N} \{ G \} } (X, A)$ does not introduce new relations among $a_{(x,y)}$s, and we obtain  $H^2_{ {\rm N} \{ G \} } (X, A) \cong A^{\oplus 6}$, since there are 8  varianbles $a_{(x,y)}$ and two relations, $a_{(x,y)}-a_{(x+2,y)}$ being constant over $y \in \{0,1\}$.

From  Proposition~\ref{prop:longexact} we obtain the following exact sequence:
$$ 0 \rightarrow \widehat{H}^2_{{\rm N} \{G\}}(X,A) 
\stackrel{j^*}{\rightarrow} 
H^2_{ {\rm NDH} } (X, A)
\stackrel{i^*}{\rightarrow} 
i^*( H^2_{ {\rm N} \{ G \} } (X, A) ) 
 \rightarrow  0
$$
where $i^*( H^2_{ {\rm N} \{ G \} } (X, A) ) $ is isomorphic to $A^{\oplus r}$ with $r \leq 6$. 
If $A=\Z$ then $i^*( H^2_{ {\rm N} \{ G \} } (X, A) ) $  is free of rank $r\leq 6$ and 
$H^2_{ {\rm NDH} } (X, A)$ is free of rank $\leq 7$. 

\bigskip

\noindent
{\it Proof of Example~\ref{ex:D3}}.
Let $\phi$ be in $\widehat{Z}^2_{{\rm N} \{G,F\}}(X,A)$, 
where we set $A=\Z$ throughout this section, then by definition $\phi$ vanishes on chains $(x,y,z)$ where $y$ and $z$ are in the same $G$- or $F$-coset. 
By direct inspection, we see that for $yG \neq zG$ and $yF \neq zF$ implies $y^{-1}z=ra$ or $r^2a$.
From Proposition~\ref{prop:equivariance} we obtain $\phi(x,y,z) = \phi(xa,ya,za)$ for $y,z$ in different $G$- and $F$-cosets. Similarly, by taking $uF = vF$ (equivariance with respect to $F$) we obtain equations 
$$
\phi(x,y,z) = 
\phi(xr,yr,zr) = \phi(xr^2,yr^2,zr^2).
$$
Together we obtain 
$$\phi(x,y,z) = \phi(xra,yra,zra) = \phi(xr^2 a,yr^2 a,zr^2 a) $$
as well. 
In $(*)$, since $\phi(x,y,z) = \phi(xu^{-1}v , yu^{-1}v ,zu^{-1}v )$ for $u^{-1}v=ra$ and $r^2a$, we obtain
$\phi (xy^{-1}z, u,v)=\phi (x, u, v)$ for $y^{-1}z=ra$ and $r^2a$. By varying $x$, we obtain 
$\phi(x, u, v)=\phi(y, u, v)$ for all $x, y$. 
From this and the equivariance we obtain 
$$\phi(1,y,z) = \phi(1,ya, za) = \phi(1,yr,zr) , $$
 which implies $\phi(x,y,z)$ constant for all $y,z$ in different $G,F$-cosets. 
 Taking $y = 1$ and $z = r$ in  $\phi(1,y,z) = \phi(1,y r, z r)$ 
 we have  $\phi(1,1, r) = \phi(1, r, r^2)$, and $r, r^2$ are in the same $F$-coset, thus $ \phi(1, r, r^2)=0$. 
 Similarly $y = 1$ and $z = a$ in  $\phi(1,y,z) = \phi(1,y r^2, z r^2)$ we have
  $\phi(1,1,a) = \phi(1, r^2, r^2 a)=0$. Hence $\phi=0$ and 
we have $\widehat{H}^2_{{\rm N} \{G,F\}}(X,A) = 0$, from which it follows that $\widehat{H}^2_{{\rm N} \{G\}}(X,A) \cong i^* (\widehat{H}^2_{{\rm N} \{G\}}(X,A)) \leq H^2_{{\rm N} \{G,F \}}(X,A)$.

To compute $i^* (\widehat{H}^2_{{\rm N} \{G\}}(X,A))$, we first characterize $2$-cocycles in $Z^2_{{\rm N} \{G,F \}}(X,A)$. 
Applying Lemma~\ref{lem:core}, we have $\phi(  x,y,z)=\phi(x, z, zy^{-1}z)$. 
By setting $z=yr$ and $yr^2$, we obtain 
$\phi(  x,y,yr)=\phi(x, yr, yr^2) = \phi(x,yr^2,y)$ and $\phi(  x,y,yr^2)=\phi(x, yr^2, yr)= \phi(x,yr,y)$.
Therefore, we can choose one element $y$ from each $F$-coset, namely $y = 1$ and $y = a$ and a $2$-cocycle $\phi$ as  
\begin{eqnarray*}
	\lefteqn{\phi} &=& \sum_{x \in D_3} \{ a_{11}(x)[\chi_{(x,1,r)}+\chi_{(x,r,r^2)}+\chi_{(x,r^2,1)}]\\
	&&\hspace{7mm}  + a_{12}(x)[\chi_{(x,1,r^2)}+\chi_{(x,r,1)}+\chi_{(x,r^2,r)}]\\
	 &&\hspace{7mm}  + a_{21}(x)[\chi_{(x,a,ar)}+\chi_{(x,ar,ar^2)}+\chi_{(x,ar^2,a)}]\\
	&& \hspace{7mm}   + a_{22}(x)[\chi_{(x,a,ar^2)}+\chi_{(x,ar,a)}+\chi_{(x,ar^2,ar)}]
\}.
	\end{eqnarray*}
	
	Observe that for all $y\neq z$ such that $yF = zF$, i.e. $z = yr$ or $yr^2$, we have $y^{-1}z = r, r^2$. Therefore $2$-cocycles in $Z^2_{{\rm N} \{G,F \}}(X,A)$ are characterized by the equations
\begin{eqnarray}
\phi(x,y,yr) - \phi(xr,yr,yr^2) - \phi(x,u,ur) + \phi(xr, u, ur) &=& 0 \label{eq:NGF1} \\
\phi(x,y,yr) - \phi(xr^2,yr^2,y) - \phi(x,u,ur^2) + \phi(xr, u, ur^2) &=& 0\label{eq:NGF2} \\ 
\phi(x,y,yr^2) - \phi(xr,yr,y) - \phi(x,u,ur) + \phi(xr^2, u, ur) &=& 0\label{eq:NGF3} \\ 
\phi(x,y,yr^2) - \phi(xr^2,yr^2,yr) - \phi(x,u,ur^2) + \phi(xr^2, u, ur^2) &=& 0 \label{eq:NGF4}
\end{eqnarray}
along with $G$-equivariance (Proposition \ref{prop:equivariance}).

Equations~(\ref{eq:NGF1}) to~(\ref{eq:NGF4}) written in terms of $a_{ij}(x)$ give 16 equations, as $y,u$ take values $1,a$. A direct inspection shows that they are not all nontrivial and independent. In fact the only three independent equations are those obtained from Equation~(\ref{eq:NGF2}) with $y = u = 1 ,a$ and Equation~(\ref{eq:NGF1}) with $y = 1$ and $u=a$
\begin{eqnarray}
	a_{11}(x) - a_{11}(xr^2) &=& a_{12}(x)-a_{12}(xr)\label{eqn:1'} \\
	a_{21}(x) - a_{21}(xr^2) &=& a_{22}(x)- a_{22}(xr)\label{eqn:2'} \\
	a_{11}(x) - a_{11}(xr^2) &=& a_{22}(x)-a_{22}(xr)\label{eqn:2''}\\
	a_{11}(x) - a_{11}(xr) &=& a_{21}(x) - a_{21}(xr)\label{eqn:3'} 
\end{eqnarray}
Equation~(\ref{eqn:2''}) can be seen to be redundant by considering Equation~(\ref{eqn:3'}) with $xr^2$ instead of $x$, and then applying Equation~(\ref{eqn:2'}). Observe that $G$-equivariance in terms of coefficients $a_{ij}(x)$ is translated as $a_{11}(x) = a_{22}(xa)$ and $a_{12}(x) = a_{21}(xa)$. Therefore by switching $x$ to $xa$ in Equation~(\ref{eqn:1'}) and using $G$-equivariance we obtain Equation~(\ref{eqn:2'}), so that this is redundant as well, and can be eliminated.

Now we determine the image of $\widehat Z^2_{\rm N\{G\}}(X,A)$ under $i^\sharp$.  Let us now suppose that $[\psi]\in \widehat H^2_{\rm N\{G\}}(X,A)$, then $i^\sharp \psi$ is in $\widehat Z^2_{\rm N\{G\}}(X,A)$, which means that $\psi$ evaluated on chains localized at $F$ is determined by coefficients $a_{ij}(x)$ as given above. Since $\psi$ satisfies the $2$-cocycle condition for chains relative to $G$, we obtain new constraints on coefficients $a_{ij}(x)$, from which we determine the image $i^\sharp(\widehat Z^2_{\rm N\{G\}}(X,A))$. 

From $(*)$ with $z = ya$ and $u,v$ in the same $F$-coset, i.e. two cases $v = ur^i$ with $i=1,2$, we get two equations
\begin{eqnarray}
\psi(xa,u,ur) - \psi(x,u,ur) &=& \psi(xr, yr, yr^2a) \label{eqn:psinonlocal1}\\
\psi(xa,u,ur^2) - \psi(x,u,ur^2) &=& \psi(xr^2,yr^2,yra). \label{eqn:psinonlocal2}
\end{eqnarray}
\begin{sloppypar}
\noindent where, as $y$ ranges in $D_3$, $(yr,yr^2a)$ gives all the (unordered) pairs in $A_1 := \{(r,r^2a), (r^2,a), (1,ra)\}$  and $(yr^2,yra)$ gives all the (unordered) pairs in $A_2 := \{(r^2,ra), (1,r^2a), (r,a)\}$, which account for all possible pairs in different $F$ and $G$-cosets. Observe that in both equations it is enough to consider $u = 1, a$, since $\psi$ on chains localized at $F$ is determined by the $F$-coset. So, Equations~(\ref{eqn:psinonlocal1}) and~(\ref{eqn:psinonlocal2}) determine $\psi$ on chains with $y,z$ in different $F,G$-cosets from the value of $\psi$ on chains localized at $F$, where $\psi$ is given by coefficients $a_{ij}(x)$. Observe that Equations~(\ref{eqn:psinonlocal1}) and~(\ref{eqn:psinonlocal2}) do not depend on $u$, from which we obtain that  $\psi(xa,1,1r) - \psi(x,1,1r) = \psi(xa,a,ar) - \psi(x,a,ar)$. 
 \end{sloppypar}
Since 
$\psi$ is $G$-equivariant so that the previous equation becomes $\psi(x,a,ar^2) - \psi(x,1,1r) = \psi(x,1,r^2) - \psi(x,a,ar)$. In terms of coefficients $a_{ij}(x)$ we have obtained 
\begin{eqnarray}
a_{22}(x) - a_{11}(x) &=& a_{12}(x) - a_{21}(x) \label{eqn:6'}.
\end{eqnarray}
Moreover, Equatioins~(\ref{eqn:psinonlocal1}) and~(\ref{eqn:psinonlocal2}) imply that $\psi$ is constant when $(y,z)$ varies in $A_1$ or $A_2$, respectively, and are given by 
\begin{eqnarray}
\psi(x,y,z) &=& a_{22}(xr^2) - a_{11}(xr^2) \quad {\rm when} \ (y,z)\in A_1, \label{eqn:A1} \\
\psi(x,y,z) &=&  a_{11}(xr)- a_{22}(xr) \quad {\rm  when} \  (y,z)\in A_2.\label{eqn:A2} 
\end{eqnarray}

We need to consider $(*)$ for chains that are neither localized at $G$ nor at $F$. 
There are the cases of $(*)$ with at least one of the pairs $(y,z)$ and $(u,v)$ in $A_1$ or $A_2$. There are five cases to consider here:
\begin{itemize}
	\item[1.] 
	$yG = zG$ and $(u,v)\in A_i$, $i = 1,2$,
	\item[2.] 
	$(y,z)\in A_i$, $i = 1,2$ and $uG = vG$,
	\item[3.] 
	$yF = zF$ and $(u,v)\in A_i$, $i = 1,2$,
	\item[4.] 
	$(y,z)\in A_i$, $i = 1,2$ and $uF= vF$, 
	\item[5.] 
	$(y,z)\in A_i$, $i = 1,2$ and $(u,v)\in A_i$, $i = 1,2$. 
\end{itemize}

We start with Case 1. Observe that for any pair 
$(u,v)\in A_1$  
we have that $u^{-1}v = ra$,   
so that we can choose a pair in $A_1$, and the result would not change, a similar consideration holds for $A_2$. Let us take the pair $(u,v) = (1,ra)$ and let $z = ya$. Then $(*)$,
with $G$-equivariance,  gives
$$
-\psi(xr,yr,yar) - \psi(x,1,ra) + \psi(x,a,r) = 0.
$$
Using
Equations~(\ref{eqn:A1}) and (\ref{eqn:A2}), localized, 
we obtain the equation
\begin{eqnarray}
- [ a_{22}(x) - a_{11}(x)] - [a_{22}(xr^2) - a_{11}(xr^2) ] + [a_{11}(xr) - a_{22}(xr) ]= 0 \label{eqn:7'}.
\end{eqnarray}
The case with $(u,v)\in A_2$   
gives the same equation. 

Case 2 is easily seen to be just a restatement of $G$-equivariance of $\psi$, so that it does not give any new equations. 

Case 3, with $z = yr$ and $(u,v) = (1,ra)\in A_1$ gives two equations, depeding on $y$ being $1$ or $a$. When $y =1 $ it follows that $(*)$ is
\begin{eqnarray*}
	\psi(x,1,r) - \psi(xra, ra, r^2a) - \psi(x,1,ra) + \psi(xr,1,ra) = 0.
	\end{eqnarray*}
Since $\psi(x,1,r) = a_{11}(x)$, $\psi(xra, ra, r^2a) = \psi(xr, r, r^2) = a_{11}(xr)$ (having used $G$-equivariance), and using Equation~(\ref{eqn:A1}) also $\psi(x, 1, ra) = a_{22}(xr) - a_{11}(xr)$, $\psi(xr, 1, ra) = a_{22}(x) - a_{11}(x)$, we obtain
\begin{eqnarray}
-a_{11}(xr) - a_{22}(xr^2) + a_{11}(xr^2) + a_{22}(x) = 0.  \label{eqn:8'}
\end{eqnarray} 

This equation is obtained from Equation~(\ref{eqn:2''}) with $xr$ instead of $x$. So it is a redundant equation. When $y = a$, we similarly obtain the equation
\begin{eqnarray*}
	a_{21}(x) - a_{21}(xr) - a_{22}(xr) + a_{11}(xr) + a_{22}(x) - a_{11}(x) = 0,
	\end{eqnarray*}
which is seen to be Equation~(\ref{eqn:8'}) by applying Equation~(\ref{eqn:3'}). Similarly, the cases with $z= yr^2$ and $(u,v) = (1,ra)\in A_1$ give again Equation~(\ref{eqn:8'}). Direct computation also shows that when $z = yr^i$ and $(u,v) = (1,r^2a)\in A_2$ we obtain four equations that are equivalent to 
$$
a_{11}(x) - a_{11}(xr) + a_{22}(xr) - a_{22}(xr^2) = 0,
$$
which is seen to be redundant from Equation~(\ref{eqn:3'}) by substituting $a_{21}(x) - a_{21}(xr)$ with $a_{22}(xr^2) - a_{22}(xr)$ as obtained from Equation~(\ref{eqn:2'}) with $xr$ instead of $x$. So no further symmetries are obtained from the eight equations corresponding to case 3. 

 Let us now consider Case 4, i.e. $(*)$ with $(y,z)\in A_1$ and $v = ur$.  Then we obtain
$$
\psi(x,1,ra) - \psi(xr,r,rar) - \psi(x,u,ur) + \psi(xra,u,ur) = 0,
$$
which can be rewritten, using $G$-equivariance on the last term as
$$
\psi(x,1,ra) - \psi(xr,r,a) - \psi(x,u,ur) + \psi(xr,ua,uar^2) = 0.
$$
Taking $u=1$ and writing the previous equation in terms of $a_{ij}(x)$ we get
\begin{eqnarray*}
2[a_{22}(xr^2) - a_{11}(xr^2)] - a_{11}(x) + a_{22}(xr) = 0. 
\end{eqnarray*}
This equation is obtained from summing together Equations~(\ref{eqn:7'}) and~(\ref{eqn:8'}), so that this is redundant. 
The other cases are seen similarly to give redundant equations.

Finally, consider Case  5, when both $(y,z)$ and $(u,v)$ are either in $A_1$ or in $A_2$. When both $(y,z)$ and $(u,v)$ are in $A_1$, say they are both $(1,ra)$, we have 
$$
\psi(x,1,ra) - \psi(xra,ra,1) - \psi(x,1,ra) + \psi(xra,1,ra) = 0,
$$
which upon using $G$-equivariance gives $\psi(xr,r,a) = \psi(xr,a,r)$. This it already known, as $\psi$ is constant on $(x,y,z)$ as the pair $(y,z)$ ranges in $A_2$. The case with $(y,z)$ and $(u,v)$ in $A_2$ gives similarly a redundant equation. When we take $(y,z)\in A_1$, say $(1,ra)$, and $(u,v)\in A_2$, say $(1,r^2a)$ we obtain 
$$
\psi(x,1,ra) - \psi(xr^2a, r^2a,r^2) - \psi(x,1,r^2a) + \psi(xra, 1, r^2a) = 0,
$$
and therefore we get
$$
a_{22}(xr^2) - a_{11}(xr^2) - a_{11}(xr) + a_{22}(xr) + a_{22}(x) - a_{11}(x) = 0.
$$
But this is Equation~(\ref{eqn:7'}), so no new symmetries are introduced. Similarly we see that when we take $(y,z)\in A_2$ and $(u,v)\in A_1$ the same equation is obtained. 

We have found that the equations determining $\psi$ are Equations~(\ref{eqn:1'}) and (\ref{eqn:3'}), Equation~(\ref{eqn:6'}) and Equation~(\ref{eqn:7'}).
As previously stated, recall that G-equivariance in terms of $a_{ij}(x)$ gives symmetries: $a_{11}(x) = a_{22}(xa)$ and $a_{12}(x) = a_{21}(xa)$. Therefore we can consider $x = 1,r,r^2$, as the equations with $x = a, ar, ar^2$ are obtained from the former ones by $G$-equivariance. Now we analyze the cases corresponding to $x= 1, r , r^2$. Observe that Equation~(\ref{eqn:1'}) gives three equations, of which one is redundant. Similarly Equation~(\ref{eqn:3'}) gives two non-redundant equations, while (\ref{eqn:6'}) splits in three cases, and (\ref{eqn:7'}) is fixed as $x = 1, r, r^2$. We have a total of 8 equations, which by direct computation are seen to reduce the number of free parameters $a_{ij}(x)$ from 12 to 4. Specifically, we have equations
\begin{eqnarray*}
a_{11}(1) &=& a_{11}(r^2) + a_{12}(1) - a_{12}(r) \\
a_{11}(r) &=& a_{11}(r^2) -  a_{12}(1) + 3a_{21}(r) - 2a_{12}(r)\\
a_{21}(1) &=& a_{12}(r) -2a_{21}(r) + 2a_{12}(1) \\
a_{21}(r^2) &=& -2a_{21}(r) + a_{12}(1) + 2a_{12}(r)\\
a_{22}(1) &=& a_{11}(r^2) + 2a_{21}(r) - 2a_{12}(r)\\
a_{22}(r) &=& a_{11}(r^2) - a_{12}(1) + 2a_{21}(r) - a_{12}(r) \\
a_{22}(r^2) &=& a_{11}(r^2) - a_{21}(r) + a_{12}(1) \\
a_{12}(r^2) &=& -3a_{21}(r) + 2a_{12}(1) + 2a_{12}(r)
\end{eqnarray*}
where $a_{11}(r^2)$, $a_{12}(r)$, $a_{12}(r^2)$ and $a_{21}(r)$ are free parameters in $A$, and $a_{ij}(x)$ with $x = a, ar, ar^2$ are obtained by $G$-equivariance. 
These equations are obtained from the above 4 equations for $x=1$ and $x=r$ by solving them for the terms that appear in the LHS. 


We have found: $\widehat Z^2_{\rm N\{G\}}(X,A) \cong A^{\oplus 4}$. 
A $2$-cocycle in $\hat{Z}^2_{\rm N\{G\}}(X,A)$ is written explicitly as 
 \begin{eqnarray*}
 \psi &=& \sum_x [a_{11}(x)\chi_{11}(x) + a_{12}(x)\chi_{12}(x)+ a_{21}(x) \chi_{21}(x)+ a_{22}(x) \chi_{22}(x)]\\
 && + \sum_x [a_{22}(xr^2)-a_{11}(xr^2)]\xi_1(x) + \sum_x [a_{11}(xr)-a_{22}(xr)]\xi_2(x),
 \end{eqnarray*}
where, for $i,j=1, 2$,  we have set $\chi_{ij}= \sum_{y\in F} \chi_{(x,a^{j-1}y,a^{j-1}yr^i)}$, $\xi_k(x) = \sum_{y \in X} \chi_{(x,yr^k,yar^k)}$ with $\xi_1(x) = \xi_2(xa)$, and the coefficients $a_{ij}(x)$ are determined by four free parameters as written above. To obtain the coboundaries, we determine $\delta \chi_{(x)}$ for $x\in X$. This is given by
$$
\delta \chi_{(r^i)} = \sum_{j= 1,2} [\chi_{j1}(r^i)-\chi_{j1}(r^{i-j}) + \xi_j(r^i) - \xi_j(r^{i+j}a)],
$$ 
while $\delta \chi_{(ar^i)}$ can be obtained from the previous one by substituting $ar^i$ in place of $r^i$. Since a $1$-cochain $f$ obtaind as a sum of characteristics functions $\delta \chi_{(x)}$ has to be $G$-equivariant, we see that $\delta \chi_{(r^i)}$ and $\delta \chi_{(ar^i)}$ 
are related by $G$-equivariance, and we can restrict ourselves to considering $\delta \chi_{(r^i)}$, for $i = 0, 1, 2$. Moreover, observe that $\delta \chi_{(1)} + \delta \chi_{(r)} = - \delta \chi_{(r^2)}$, so that the image $\delta \widehat Z^1_{\rm N\{G\}}(X,A)$ 
is generated by $\delta \chi_{(1)}$ and $\delta \chi_{(r)}$. We see therefore that in the quotient we obtain 
${\rm rank}\ \widehat H^2_{\rm N\{G\}}(X,A)  \leq 2$. In fact in Proposition~\ref{prop:cyclic/dihedral} it is proved the lower bound ${\rm rank}\ \widehat H^2_{\rm N\{G\}}(X,A)  \geq 2$, so the second cohomology group has rank $2$.

To compute $Z^2_{\rm N\{G\}}(X,A)$, let us consider a $2$-cocycle $\phi$ localized at $G$ and $(*)$ with $y,z$ and $u,v$ in the same $G$-coset. Upon changing variable $y$ we obtain the symmetry $\phi(x,y,ya) = \phi(x,ya,y)$. From this we can rewrite the $2$-cocycle condition on chains localized at $G$ as
$$
\phi(x,y,ya) - \phi(xa,y,ya) = \phi(x,u,ua) - \phi(xa,u,ua),
$$
for all $y,u$.
For each $x\in X$, letting $y,u$ vary in $X$, we obtain three equations:
\begin{eqnarray*}
\phi(x,1,a) - \phi(x,r,ra) &=& \phi(xa,1,a) - \phi(xa,r,ra)\\
\phi(x,1,a) - \phi(x, r^2,r^2a) &=& \phi(xa,1,a) - \phi(xa,r^2,r^2a)\\
\phi(x,r,ra) - \phi(x,r^2,r^2a) &=& \phi(xa,r,ra) - \phi(xa,r^2,r^2a)
\end{eqnarray*}
from which, considering $a(x) := \phi(x,1,a)$, $b(x):= \phi(x,r,ra)$, $c(x):= \phi(x,r^2,r^2a)$ and $d(x):= \phi(xa,1,a)$ as free parameters in $A$, we also determine the remaining two values of $\phi$ evaluated on $(xa,r,ra)$ and $(xa,r^2,r^2a)$. It is enough to consider only $x= 1,r, r^2$, since the previous three equations would automatically fix the values of $\phi$ for $xa = a, ra, r^2a$. Therefore we have four free parameters $a(x)$, $b(x)$, $c(x)$ and $d(x)$ with $x = 1,r,r^2$. This corresponds to a $12$-dimensional $Z^2_{\rm N\{G\}}(X,A)$ whose elements are written explicitly as
\begin{eqnarray*}
\phi &=& \sum_{x=1,r,r^2} [a(x)\chi_{(x,1,a)} + b(x)\chi_{(x,r,ra)} + c(x)\chi_{(x,r^2,r^2a)} + d(x)\chi_{(xa,1,a)}\\
&&+ (-a(x)+b(x)+d(x))\chi_{(xa,r,ra)} + (-a(x)+c(x)+ d(x))\chi_{(xa,r^2,r^2a)}].
\end{eqnarray*}
For $f: X\rightarrow A$, on chains localized at $G$, we have $\delta f(x,y,ya) = f(x) - f(xa)$, so that $\delta f$ does not depend on $y$, hence it satisfies $\delta f(x,y,ya) = \delta f(x,ya,y)$, 
and the symmetries 
 $\delta f(x,y,ya) = - \delta f(xa,y,ya)$.
We can write $\delta f$ as 
$$
\delta f = \sum_{x = 1,r,r^2} \alpha[\chi'_{(x,1,a)} + \chi'_{(x,r,ra)} + \chi'_{(x,r^2,r^2a)}- \chi'_{(xa,1,a)} - \chi'_{(xa,r,ra)} - \chi'_{(xa,r^2,r^2a)}],
$$
where $\chi'_{(x,y,ya)} := \chi_{(x,y,ya)} + \chi_{(x,ya,y)}$. 
This implies ${\rm rank}\ H^2_{\rm N\{G\}}(X,A) \leq 9$. 
Therefore from the exact sequence
$$
0 \longrightarrow  \widehat H^2_{\rm N\{G\}}(X,A)  \longrightarrow H^2_{\rm NDH}(X,A)  \longrightarrow i^*(H^2_{\rm NDH}(X,A)) \longrightarrow 0, 
$$
and ${\rm rank}  \widehat H^2_{\rm N\{G\}}(X,A) = 2$, 
${\rm rank}\ i^*(H^2_{\rm NDH}(X,A)) \leq {\rm rank} H^2_{\rm N\{G\}}(X,A) \leq 9$, 
we obtain an upper bound for the rank of cohomology group: ${\rm rank}\ H^2_{\rm N\{G\}}(X,A) \leq 11$.

\section{Proofs for Section~\ref{sec:fund}} \label{app:fund}

\noindent
{\it Proof for Lemma~\ref{lem:torus2n}}.
	We proceed by induction on $n$ positive, noting that the formulas hold for $n=1$ and $n = 2, 3$, since 
	\begin{eqnarray*}
		\sigma_1 ( (x,y)\times (u,v) )&=& (u,v)\times (xu^{-1}v, yu^{-1}v) = (u,v)\times (x\beta, y\beta),\\
		\sigma_1^2 ( (x,y)\times (u,v) )&=& (x\beta,y\beta)\times (u\beta^{-1}\alpha\beta,v\beta^{-1}\alpha\beta),\\
		\sigma_1^3 (  (x,y)\times (u,v) )&=& (u\beta^{-1}\alpha\beta,v\beta^{-1}\alpha\beta)\times (x\alpha^{-1}\beta\alpha\beta, y\alpha^{-1}\beta\alpha\beta).
	\end{eqnarray*}
	Let us now assume that the equations hold for some $n> 3$. We distinguish two cases, based on the parity of $n$. Suppose first that $n = 2k$ for some $k$. Since $\sigma_1^{n+1} = \sigma_1 \sigma_1^n = \sigma_1 \sigma_1^{2k}$, by induction hypothesis we obtain
	\begin{eqnarray*}
		\sigma_1^{n+1}( (x,y)\times (u,v) ) 
		&=& \sigma_1 ( 
	(x\alpha^{-k}(\alpha\beta)^{k}, y\alpha^{-k}(\alpha\beta)^{k})\times (u\beta^{-k}(\alpha\beta)^k,v\beta^{-k}(\alpha\beta)^k ) 
		) \\
		&=& (u\beta^{-k}(\alpha\beta)^k,v\beta^{-k}(\alpha\beta)^k) 
		 \times ( 	x\alpha^{-k}(\alpha\beta)^{k}\cdot (\alpha\beta)^{-k}\beta^ku^{-1}v\beta^{-k}(\alpha\beta)^k,
 \\ & & \hspace{4.7cm}
		 y\alpha^{-k}(\alpha\beta)^{k}\cdot (\alpha\beta)^{-k}\beta^ku^{-1}v\beta^{-k}(\alpha\beta)^k).
\end{eqnarray*}
	Here $\alpha^{-k}(\alpha\beta)^{k}\cdot (\alpha\beta)^{-k}\beta^ku^{-1}v\beta^{-k}(\alpha\beta)^k = \alpha^{-k}\beta (\alpha\beta)^k = \alpha^{-(k+1)}  (\alpha\beta)^{k+1}$, so this case is complete. Similarly, if $n = 2k+1$, we have
	\begin{eqnarray*}
\lefteqn{ 		( \sigma_1\sigma_1^n )( (x,y)\times (u,v) )
		=
			(x\alpha^{-(k+1)} (\alpha\beta)^{k+1} ,y\alpha^{-(k+1)} (\alpha\beta)^{k+1}) }
		\\
		& \times & (u\beta^{-k}(\alpha\beta)^k(\alpha\beta)^{-k}\beta^{-1}\alpha^kx^{-1}y \alpha^{-k}\beta(\alpha\beta)^k,
			v \beta^{-k}(\alpha\beta)^k(\alpha\beta)^{-k}\beta^{-1}\alpha^kx^{-1}y \alpha^{-k}\beta(\alpha\beta)^k),
\end{eqnarray*}
	where, from $\beta^{-k}(\alpha\beta)^k(\alpha\beta)^{-k}\beta^{-1}\alpha^kx^{-1}y\alpha^{-k}\beta(\alpha\beta)^k = \beta^{k+1}(\alpha\beta)^{k+1}$, this case holds true as well, and the proof is complete. The case of $n$ negative is done similiarly and it is therefore omitted.

\bigskip

\noindent
{\it Proof of Proposition~\ref{thm:pretzelnoframe}}.
	 Assign  generators $(x_i,y_i)$, $i=1 , \ldots, r$, to the upper left (double) arcs of $b_i$.
	 Since the upper right  arcs of $b_i$ are identified with the upper left  arcs of $b_{i+1}$ in the opposite order (double arcs are parallel and do not cross), the upper right arc of  $b_i$ is assigned 
	 $(y_{i+1}, x_{i+1})$. 
	  We apply Lemma~\ref{lem:torus2n} to $b_i = \sigma_1^{2k_i+1}$.
	Since $(x, y) \times (u, v) = (x_i , y_i) \times  (y_{i+1}, x_{i+1})$ in the lemma, we set 
	$\alpha=x^{-1}y=\alpha_i := x_i^{-1}y_i$. 
	Then $\beta=u^{-1}v=y_{i+1}^{-1} x_{i+1}=\alpha_{i+1}^{-1}$.

	If $n_i=2k_i$  are all even, then   the lower arcs of $b_i$ are colored by 
$$
	(x_i \alpha_i^{-k_i}(\alpha_i \alpha_{i+1}^{-1} )^{k_i}, 
	y_i \alpha_i ^{-k_i}(\alpha_i \alpha_{i+1}^{-1} )^{k_i}  )
	\times 
	( y_{i+1} \alpha_{i+1}^{-k_i}  (\alpha_i \alpha_{i+1}^{-1} )^{k_i} ,
	x_{i+1} \alpha_{i+1}^{-k_i}  (\alpha_i \alpha_{i+1}^{-1} )^{k_i}   ) .
$$	
 Identifications of the lower arcs (the right bottom arcs of $b_{i}$ and the left bottom arcs of $b_{i+1}$ in opposite order) 
 leads to the following relations for all $i\in \Z_r$:
 \begin{eqnarray*}
  y_{i+1} \alpha_{i+1}^{-k_i}  (\alpha_i \alpha_{i+1}^{-1} )^{k_i}
  &=&
y_{i+1} \alpha_{i+1}^{-k_{i+1}}(\alpha_{i+1} \alpha_{i+2}^{-1} )^{k_{i+1}}  , \label{even1} \\
x_{i+1} \alpha_{i+1}^{-k_i}  (\alpha_i \alpha_{i+1}^{-1} )^{k_i} 
&=&
x_{i+1}  \alpha_{i+1}^{-k_{i+1}}(\alpha_{i+1} \alpha_{i+2}^{-1} )^{k_{i+1}} .
  \end{eqnarray*}
These give the same relator $\Theta_i$.

\end{document}